\def\RR{\mathbb R}
\newtheorem{theorem}{Theorem}
\newtheorem{proposition}{Proposition}
\newtheorem{lemma}{Lemma}
\newtheorem{remark}{Remark}
\begin{document}

\title[Dynamic programming for controlled fractional SIS models]{A dynamic programming approach for controlled fractional SIS models}

\keywords{Fractional SIS models, Caputo-Fabrizio operator, optimal control, dynamic programming, asymptotic solutions.}
\subjclass{26A33, 92D30, 49J20,\\49L25, 65M22}
\author[S. Cacace]{Simone Cacace}
\address{S. Cacace, Dipartimento di Matematica e Fisica, Università degli Studi di Roma Tre, Largo San Leonardo Murialdo, 1, 00146, Roma, Italy}
\email{cacace@mat.uniroma3.it}

\author[A.C. Lai]{Anna Chiara Lai}
\address{A. C, Lai, Dipartimento di Scienze di Base e Applicate per l'Ingegneria,
Sapienza Universit\`a di Roma, Via Antonio Scarpa 10, 00161, Roma, Italy}
\email{annachiara.lai@uniroma1.it}

\author[P. Loreti]{Paola Loreti}
\address{P. Loreti, Dipartimento di Scienze di Base e Applicate per l'Ingegneria,
Sapienza Universit\`a di Roma, Via Antonio Scarpa 16, 00161, Roma, Italy}
\email{paola.loreti@uniroma1.it}
\begin{abstract}We investigate a susceptible-infected-susceptible (SIS) epidemic model based on the Caputo-Fabrizio operator. After performing an asymptotic analysis of the system, we study a related finite horizon  optimal control problem with state constraints. We prove that the corresponding value function is a viscosity solution of a dynamic programming equation. We then turn to the asymptotic behavior of the value function, proving its convergence to the solution of a stationary problem, as the planning horizon tends to infinity. Finally, we present some numerical simulations providing a qualitative description of the optimal dynamics and the value functions involved.  \end{abstract}
\maketitle{}
\section{Introduction}
Consider the Caputo-Fabrizio derivative of order $\alpha\in[0,1]$ for a function  $f\in H^1((a,b))$, $a<b$ 
$$D^{\text{CF}}_\alpha f(t):=\frac{M(\alpha)}{1-\alpha}\int_a^t f'(\tau) e^{-\frac{\alpha}{1-\alpha}(t-\tau)}d\tau\,,$$
where $M(\alpha)$ is a scaling factor satisfying $M(0)=M(1)=1$.

\medskip\noindent
We study the fractionary SIS system
\begin{equation}\label{SIS}
\begin{cases}
D^{\text{CF}}_\alpha S= -\left(\frac{\beta}{N} S-\gamma\right)I\\
D^{\text{CF}}_\alpha I = \left(\frac{\beta}{N} S-\gamma\right)I\\
S(0)=S_0\\
I(0)=I_0\,,
\end{cases}
\end{equation}
where $\beta,\gamma, N>0$, and  $I_0,S_0\geq 0$ satisfy $I_0+S_0=N$. Moreover, we make the technical assumption
\begin{equation}\label{gammacond}
\alpha+(1-\alpha)(\beta-\gamma)\leq M(\alpha).
\end{equation}
Note that above condition is trivially satisfied if $\alpha=1$, since $M(1)=1$ by definition. 
The functions $S(t)$ and $I(t)$ represent the size of susceptible and infective individuals, respectively. $N$ is assumed to be the size of the total population at initial time.   $\beta$ is the average number of contacts per person per time, multiplied by the probability of disease transmission in a contact between a susceptible and an infectious subject, and $\gamma$ is the recovery rate. 

{Fractional epidemic models attracted the interest of researchers, see \cite{chen2021AMM} for a recent review, due to the possibility of tuning the derivative order $\alpha$ for applications to real data fitting, see for instance \cite{li2019NA}. An important feature is the ability to incorporate  memory effects into the model: in particular, we refer  \cite{SISCF} (and the references therein) for a detailed study of the inclusion of memory in epidemic models by using the Caputo-Fabrizio operator.  The peculiarity of this fractional operator, introduced in \cite{CapFab}, is the presence of a non-singular, exponential kernel.}
We show below that  \eqref{SIS} rewrites as 
\begin{equation}\label{saturatedSIS}
\begin{cases}S'=-\frac{\lambda_{\alpha}}{1+k_{\alpha} I}SI+\frac{r_{\alpha}}{1+k_{\alpha} I} I\\
I'=\frac{\lambda}{1+k_{\alpha} I}SI-\frac{r_{\alpha}}{1+k_{\alpha} I} I
\end{cases}\end{equation}
where 
$$\lambda_{\alpha}:=\frac{\beta\alpha}{N(M(\alpha)-(1-\alpha)(\beta-\gamma)},\quad r_{\alpha}:=\frac{\gamma\alpha}{N(M(\alpha)-(1-\alpha)(\beta-\gamma)},$$
and
$$ k_{\alpha}:=\frac{(1-\alpha)2\beta}{N(M(\alpha)-(1-\alpha)-(1-\alpha)(\beta-\gamma)}.$$
{ Dynamics as \eqref{saturatedSIS} underlie the class of SIS models with  \emph{saturated incidence rate} $H_{\alpha}(I):= \frac{\lambda_{\alpha} I}{1+k_{\alpha} I}$, and \emph{saturated treatment function} $T_{\alpha}(I):=\frac{r_{\alpha} I}{1+k_{\alpha} I}$, see  \cite{saturatedSIS,saturatedsis2}. The system \eqref{saturatedSIS} models a stable population in which the natural recovery rate is zero}:  individuals recover from the disease only if they are treated, and they are healed at the rate $T_\alpha(I)$. In particular, $r$ is the cure rate, whereas $1/(1+k_{\alpha} I)$ measures the reverse effect of the delayed treatment, due, for instance, to the limited health system capacity. Also the incidence rate $H_\alpha(I)$ is assumed to saturate as $I$ increases: this models the psychological effect of the awareness in the susceptible population about the existence of a large size of infected individuals, inducing a more cautious behavior \cite{capasso1978}. It is worth noting that \eqref{saturatedSIS} is  also related to equations arising in the dynamics between tumor cells, immune-effector cells, and immunotherapy \cite{tumor-immune}. We refer to \cite{piccoli} and the references therein, for an application of  control theory to the optimization of cancer therapies in a general, nonlinear setting. 

\medskip Using the conservation of the population, i.e., the identity $S=N-I$, the system \eqref{SIS} reduces to a non-linear ordinary differential equation depending on the infected population $I(t)$ only, see Theorem \ref{p2}. We then complete this equation with a linear control term, and we address the problem of minimizing the size of infected individuals plus a quadratic cost on the control. To this end, we study the associated dynamic programming equation, namely an evolutive Hamilton-Jacobi equation for which the value function $u^\alpha(x,t)$ is proved to be a viscosity solution, see Theorem \ref{thmexistence}. Our main result is Theorem \ref{thm1}, in which we prove, as $t\to+\infty$, the convergence of $u^\alpha(x,t)$ to  the value function $v^\alpha(x)$ of an associated stationary problem.
Theorem \ref{thmexistence} and Theorem \ref{thm1}, and the techniques adopted for their proofs, are inspired by the paper \cite{FIL06}. For a general introduction on the topic, we refer to \cite{lions}. 

Finally, we introduce a suitable finite difference scheme for solving the Hamilton-Jacobi equation and building the corresponding optimal trajectories. Some numerical tests complete the presentation, validating our results and providing a qualitative analysis of the solutions. 
\section{The SIS model with Caputo Fabrizio derivative}
We begin by noting the indentity for $f\in H^1((a,b))$:
\begin{equation}\label{dD}
\frac{d}{dt} D^{\text{CF}}_\alpha f(t)=\frac{M(\alpha)}{1-\alpha}f'(t)-\frac{\alpha}{1-\alpha}D^{CF}_\alpha f(t)\quad \forall t\in(a,b),\alpha\in[0,1).
\end{equation}
We have the following result, relating \eqref{SIS} to an algebraic identity and to an ordinary differential equation. 

\begin{theorem}\label{p2}
Assume \eqref{gammacond}. Let $I_0,S_0>0$, $N:=S_0+I_0$, and $(S,I)$ be a solution of \eqref{SIS}. Then $S(t)=N-I(t)$ for all $t\geq 0$, and $I$ is the unique,  global positive solution of the Cauchy problem
\begin{equation}\label{ordSIS}
\begin{cases} 
I'=b_\alpha(I):=(\beta-\gamma-\frac{\beta}{N}I)I \dfrac{\alpha}{M(\alpha)-(1-\alpha)(\beta-\gamma-\frac{2\beta}{N}I)}\\
I(0)=I_0.
\end{cases}
\end{equation}
\end{theorem}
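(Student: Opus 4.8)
The plan is to exploit the structure of the Caputo-Fabrizio operator, using the differentiation identity \eqref{dD}, to convert the fractional system \eqref{SIS} into a classical ODE. I proceed in three stages: first establish the conservation law $S+I=N$, then derive the reduced scalar ODE, and finally argue global existence, positivity, and uniqueness.

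First I would establish that $S(t)=N-I(t)$. Adding the two equations of \eqref{SIS} gives $D^{\text{CF}}_\alpha(S+I)=0$. Since the Caputo-Fabrizio operator annihilates constants and, by its definition as an integral of $f'$ against a kernel, $D^{\text{CF}}_\alpha f \equiv 0$ forces $f'\equiv 0$ (the kernel $e^{-\frac{\alpha}{1-\alpha}(t-\tau)}$ is strictly positive, so the associated Volterra equation has only the trivial derivative solution), the quantity $S+I$ is constant, equal to its initial value $S_0+I_0=N$. This reduces the system to the single equation for $I$, with $S$ replaced by $N-I$ throughout.

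Next I would derive \eqref{ordSIS}. Writing $g(I):=\left(\frac{\beta}{N}S-\gamma\right)I=\left(\frac{\beta}{N}(N-I)-\gamma\right)I=\left(\beta-\gamma-\frac{\beta}{N}I\right)I$, the second equation of \eqref{SIS} reads $D^{\text{CF}}_\alpha I = g(I)$. Differentiating both sides in $t$ and applying \eqref{dD} gives
\begin{equation*}
g'(I)\,I' = \frac{d}{dt}D^{\text{CF}}_\alpha I = \frac{M(\alpha)}{1-\alpha}I' - \frac{\alpha}{1-\alpha}D^{\text{CF}}_\alpha I = \frac{M(\alpha)}{1-\alpha}I' - \frac{\alpha}{1-\alpha}g(I).
\end{equation*}
Here $g'(I)=\beta-\gamma-\frac{2\beta}{N}I$. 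Solving the resulting algebraic relation for $I'$, one collects the $I'$ terms and obtains
\begin{equation*}
I'\left(\frac{M(\alpha)}{1-\alpha}-g'(I)\right)=\frac{\alpha}{1-\alpha}g(I),
\end{equation*}
which, after multiplying through by $(1-\alpha)$ and simplifying, yields exactly $I'=b_\alpha(I)$ as stated. The initial condition $I(0)=I_0$ is inherited directly.

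Finally I would address global existence, positivity, and uniqueness. The denominator $M(\alpha)-(1-\alpha)(\beta-\gamma-\frac{2\beta}{N}I)$ must stay positive for $b_\alpha$ to be well defined and for the reduction to be valid; this is where assumption \eqref{gammacond} enters, guaranteeing positivity of the denominator at least on the relevant range $I\in[0,N]$, so that $b_\alpha$ is smooth (hence locally Lipschitz) there. Positivity and the a priori bound $0\le I\le N$ follow by observing that $b_\alpha$ vanishes at $I=0$ and at $I=N$ (since $g(N)=0$), so these are equilibria that the flow cannot cross; standard ODE comparison then confines the trajectory to $[0,N]$ for all $t\ge 0$, giving global existence, and the Picard-Lindelöf theorem gives uniqueness. \textbf{The main obstacle} I anticipate is the careful verification that \eqref{gammacond} indeed keeps the denominator bounded away from zero on all of $[0,N]$ — the worst case is at the endpoint $I=N$, where $g'(N)=\beta-\gamma-2\beta=-\beta-\gamma$ makes the denominator largest, versus $I=0$ where $g'(0)=\beta-\gamma$ gives $M(\alpha)-(1-\alpha)(\beta-\gamma)$, exactly the quantity controlled by \eqref{gammacond} — so the delicate point is confirming the sign of the denominator uniformly rather than just at a single value, and reconciling this with the claimed equivalence between the differentiated ODE and the original fractional equation (one must check the differentiation step is reversible, i.e. that a solution of \eqref{ordSIS} indeed satisfies $D^{\text{CF}}_\alpha I=g(I)$ and not merely its derivative, which requires matching the value at $t=0$).
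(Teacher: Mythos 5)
Your overall strategy is the paper's: sum the equations and use \eqref{dD} to get $S+I\equiv N$, differentiate $D^{\text{CF}}_\alpha I=g(I)$ and apply \eqref{dD} again to solve for $I'$, then handle existence, uniqueness and positivity for the resulting ODE. The first two stages are correct (your Volterra argument for $D^{\text{CF}}_\alpha f\equiv 0\Rightarrow f'\equiv 0$ is equivalent to the paper's use of \eqref{dD}). Two remarks on the side issues you raise: the positivity of the denominator is not delicate at all, since $M(\alpha)-(1-\alpha)(\beta-\gamma-\tfrac{2\beta}{N}I)$ is affine and nondecreasing in $I$, so its minimum over $[0,+\infty)$ is attained at $I=0$ and equals $M(\alpha)-(1-\alpha)(\beta-\gamma)\ge\alpha>0$ by \eqref{gammacond} (your claim that ``the worst case is at $I=N$, where the denominator is largest'' is self-contradictory); and the reversibility of the differentiation step is not needed for the direction the theorem asserts, which only goes from a solution of \eqref{SIS} to a solution of \eqref{ordSIS}.

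The genuine gap is in your third stage. You write that $b_\alpha$ vanishes at $I=N$ ``since $g(N)=0$'', but $g(N)=(\beta-\gamma-\beta)N=-\gamma N\neq 0$: the zeros of $b_\alpha$ are $I=0$ and the endemic value $E=N(1-\gamma/\beta)$, not $I=N$. Hence the claimed invariance of $[0,N]$ by a two-sided barrier argument does not follow, and with it your route to global existence collapses as stated. The positivity half of your argument ($b_\alpha(0)=0$ plus uniqueness forbids crossing $0$) is sound and is exactly what the paper does; for the upper bound you would instead need to observe that $b_\alpha(I)<0$ for $I>\max\{E,0\}$, so the trajectory is eventually decreasing and cannot blow up, or follow the paper, which computes $b_\alpha'$ and shows it is bounded on $[x_1,+\infty)$ for some $x_1<0$ (using that $b_\alpha'(I)\to-\alpha/(2(1-\alpha))$ as $I\to+\infty$), so that $b_\alpha$ is globally Lipschitz there and solutions are global and unique. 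Finally, you do not treat $\alpha=1$: the identity \eqref{dD} and your entire derivation divide by $1-\alpha$, so this case must be handled separately (the paper reduces it to the classical logistic equation with its explicit solution).
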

\begin{proof}
Fix $I_0,S_0>0$, $N:=S_0+I_0$, and  a solution $(S,I)$ of \eqref{SIS} with initial datum $(S_0,I_0)$. Define $N(t):=S(t)+I(t)$. By the linearity of the Caputo-Fabrizio operator, summing the two equations in \eqref{SIS} we get $D_\alpha^{\text{CF}}N(t)\equiv 0$. Then, also in view of \eqref{dD},
$$0=\frac{d}{dt}D_\alpha^{\text{CF}}N(t)=\frac{M(\alpha)}{1-\alpha}N'(t)\,,$$
from which we deduce $N(t)=N(0)=S(0)+I(0)=N$.
Replacing $S=N-I$ in the second equation of \eqref{SIS}, we have
\begin{align*}
\frac{d}{dt}D^{\text{CF}}_\alpha I& =\frac{d}{dt} \left(\left(\frac{\beta}{N} S-\gamma\right)I\right)\\
&=\frac{d}{dt} \left(\left(\beta-\gamma-\frac{\beta}{N} I\right)I\right)= \left(\beta-\gamma-\frac{2\beta}{N} I\right)I'\,.
\end{align*}
On the other hand, using \eqref{dD} and the second equation in \eqref{SIS}, we get
\begin{align*}
\frac{d}{dt}D^{\text{CF}}_\alpha I
&=\frac{M(\alpha)}{1-\alpha}I'-\frac{\alpha}{1-\alpha}\left(\beta-\gamma-\frac{\beta}{N} I\right)I.
\end{align*}
Therefore
$$\left(\beta-\gamma-\frac{2\beta}{N} I\right)I'=\frac{M(\alpha)}{1-\alpha}I'-\frac{\alpha}{1-\alpha}\left(\beta-\gamma-\frac{\beta}{N} I\right)I.$$
Making the above equation explicit with respect to $I'$, we obtain the second equation of \eqref{ordSIS}.

Now,   $b_\alpha$ is well defined in $(x_0,+\infty)$, where 
$$x_0:=((\beta-\gamma)(1-\alpha)-M(\alpha))\frac{2N}{\beta(1-\alpha)}$$
and assumption \eqref{gammacond} implies that $x_0<0$. 
Moreover,  for $\alpha\in[0,1)$, we get  
$$b_\alpha'(I)=\frac{\alpha  \left(\beta -\gamma -\frac{2 \beta 
   I}{N}\right)}{M(\alpha)-(1-\alpha ) \left(\beta -\gamma -\frac{2 \beta 
   I}{N}\right)}-\frac{2 (1-\alpha ) \alpha  \frac{\beta}{N}  I \left(\beta -\gamma
   -\frac{\beta  I}{N}\right)}{ \left( M(\alpha)-(1-\alpha ) \left(\beta
   -\gamma -\frac{2 \beta  I}{N}\right)\right)^2}$$
and 
  $$\lim_{I\to+\infty}b'_\alpha(I)\to-\frac{\alpha}{2(1-\alpha)} =-\frac{\alpha}{2(1-\alpha)}.$$
  This, together with the continuity of $b'_\alpha$ in $(x_0,+\infty)$, implies that $b'_\alpha$ is uniformly bounded in any closed subset of $(x_0,+\infty)$, in particular in $[x_1,+\infty)$ for some fixed $x_1\in(x_0,0)$. We conclude that $b_\alpha$ is Lipschitz continuous in $(x_1,+\infty)$, hence the second equation of \eqref{ordSIS} admits a unique, global solution in $(x_1,+\infty)$. 
It is left to show that 
if $I_0>0$ then $I(t)\geq0$ for all $t>0$. We argue by contradiction. Assume that $I_0>0$ and that the corresponding solution satisfies $I(\bar t)<0$ for some $\bar t>0$. Then, by continuity, $I(t^*)=0$ for some $t^*\in(0,\bar t)$, so that  $\bar I(t):=I(t+t^*)$ is the solution of \eqref{ordSIS} with initial datum $0$. On the other hand, $b_\alpha(0)=0$ implies that $I(t)\equiv 0$ is the unique solution of \eqref{ordSIS}, with initial datum $0$, and  consequently $\bar I(t)\equiv 0$. It follows that $0=\bar I(\bar t-t^*)=I(\bar t)<0$, namely the required contradiction.
 
In the remaining case $\alpha=1$, we recover the classical logistic equation
$$I'= \left(\beta-\gamma-\frac{\beta}{N}I\right)I\,,$$
whose unique solution in $[0,+\infty)$ is explicitly given by $$I(t)=\frac{NI_0 (\beta-\gamma)}{e^{t (\gamma-\beta)} (N(\beta-\gamma)- \beta I_0)+\beta I_0}.$$
and satisfies $I(t)\ge 0$ for all $t>0$ if $I_0>0$.
\end{proof}
\subsection{Equlibria and asymptotic behavior}
We introduce the reproduction factor $\rho:=\beta/\gamma$ and we describe the asymptotic behavior of \eqref{ordSIS} according to the cases of $\rho>1$ and $\rho\leq 1$. 
\begin{proposition}\label{equilibrium}
If $\rho>1$ (respectively, $\rho\leq 1$) then the \emph{endemic population} 
$E:=N(1-\frac{1}{\rho})$ (resp, the equilibrium $0$) is an asymptotically stable equilibrium point for \eqref{ordSIS}.
 In particular, for $I_0>0$, the corresponding solution $I$ 
 satisfies $I(t)\to E$ (resp. $I(t)\to 0$) as $t\to+ \infty$. 
 \end{proposition}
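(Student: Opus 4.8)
The plan is to exploit the scalar, autonomous structure of \eqref{ordSIS} through an elementary phase-line analysis, which delivers both the convergence and the stability at once. First I would record that the denominator $D(I):=M(\alpha)-(1-\alpha)\bigl(\beta-\gamma-\frac{2\beta}{N}I\bigr)$ is positive for every $I\ge 0$: for $\alpha\in(0,1)$ it is affine with positive slope $(1-\alpha)\frac{2\beta}{N}$ and vanishes only at $I=x_0<0$ by \eqref{gammacond}, while for $\alpha=1$ it equals the constant $M(1)=1$. Hence $b_\alpha$ is smooth on $[0,+\infty)$ and there the sign of $b_\alpha(I)$ coincides with that of its numerator $\alpha\bigl(\beta-\gamma-\frac{\beta}{N}I\bigr)I$.

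Next I would locate the equilibria by factoring this numerator: on $[0,+\infty)$ one has $b_\alpha(I)=0$ if and only if $I=0$ or $I=E=N(1-\frac{1}{\rho})$, the latter lying in $(0,+\infty)$ precisely when $\rho>1$. Reading the sign of $b_\alpha$ off the factorization, if $\rho>1$ then $b_\alpha>0$ on $(0,E)$ and $b_\alpha<0$ on $(E,+\infty)$, whereas if $\rho\le 1$ then $\beta-\gamma-\frac{\beta}{N}I<0$ for all $I>0$, so $b_\alpha<0$ throughout $(0,+\infty)$.

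By Theorem \ref{p2} the solution issuing from $I_0>0$ is global and nonnegative, so these sign conditions force it to be monotone: for $\rho>1$ it increases to $E$ when $I_0\in(0,E)$ and decreases to $E$ when $I_0>E$; for $\rho\le 1$ it decreases while remaining bounded below by $0$. A bounded monotone solution of an autonomous equation converges, and its limit must be a zero of $b_\alpha$, which identifies the limit as $E$ (resp. $0$) and proves attractivity. Asymptotic stability then follows from the same sign structure, since at a nondegenerate equilibrium $b_\alpha$ passes from $+$ to $-$; equivalently one may simply evaluate the displayed formula for $b_\alpha'$ to obtain $b_\alpha'(E)<0$ when $\rho>1$ (using $\beta-\gamma-\frac{\beta}{N}E=0$, so the second term drops out) and $b_\alpha'(0)<0$ when $\rho<1$.

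I expect the threshold case $\rho=1$ to be the main obstacle, since there $E=0$ merges with the trivial equilibrium and the linearization degenerates, $b_\alpha'(0)=0$. Here I would argue directly from the sign: when $\beta=\gamma$ the numerator reduces to $-\alpha\frac{\beta}{N}I^2$, so $b_\alpha(I)<0$ for all small $I\ne 0$ and any solution with $I_0>0$ decreases monotonically to $0$, giving the claimed convergence. The only caveat worth flagging is that in this borderline case stability of the origin is one-sided, so the statement should be read within the physically relevant domain $I\ge 0$, on which the attractivity asserted by the ``in particular'' clause nonetheless holds.
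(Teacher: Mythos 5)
Your argument is correct. It is worth noting, though, that it rests on exactly the same computation as the paper's proof, just packaged differently. The paper takes $V(x)=\frac{1}{2}(x-I_e)^2$ with $I_e=E$ or $I_e=0$ and verifies the Lyapunov decrease condition $V'(x)\,b_\alpha(x)<0$ for $x>0$, $x\neq I_e$; but $V'(x)b_\alpha(x)=(x-I_e)b_\alpha(x)$, so that condition is precisely the sign pattern you read off the factorization of the numerator of $b_\alpha$ (positive below the equilibrium, negative above it, the denominator being positive on $[0,+\infty)$ by \eqref{gammacond}). Where the paper then invokes the Lyapunov stability theorem, you close the argument by hand via the elementary phase-line route: monotone bounded solutions of a scalar autonomous equation converge to a zero of $b_\alpha$. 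What your version buys is self-containedness (no appeal to Lyapunov theory), an explicit treatment of the threshold case $\rho=1$, where the linearization degenerates and the stability of the origin is only one-sided relative to the invariant half-line $I\ge 0$ (a subtlety the paper's proof glosses over, since it too only checks the decrease condition for $x>0$), and the quantitative remark $b_\alpha'(E)<0$, $b_\alpha'(0)<0$ for $\rho<1$. What the paper's version buys is brevity. Both proofs tacitly require $\alpha>0$ (for $\alpha=0$ one has $b_\alpha\equiv 0$ and no equilibrium is attracting), a restriction neither you nor the paper states explicitly.
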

\begin{proof}
We rewrite the second equation in \eqref{ordSIS} in terms of $\rho$:
\begin{equation}\label{RordSIS}
\begin{cases} 
I'=(\rho-1- \frac{\rho}{N}I)I \dfrac{\alpha}{M(\alpha)/\gamma-(1-\alpha)(\rho-1-\frac{2\rho}{N}I)}\\
I(0)=I_0.
\end{cases}
\end{equation}
We set $I_e:=E$ if $\rho>1$, $I_e:=0$ otherwise, and we define $V(x):=\frac{1}{2}(x-I_e)^2$. We prove that $V$ is a Lyapunov function for \eqref{RordSIS}. To this end, note that $V$ is a positive definite function in $\RR\setminus\{I_e\}$ and that $V$ is smooth in $\RR$. Moreover, by a direct computation, we get
$$V'(x)b_\alpha(x)=
\begin{cases}
 \dfrac{-\alpha N(\rho-1- \frac{\rho}{N}x)^2x}{M(\alpha)/\gamma-(1-\alpha)(\rho-1-\frac{2\rho}{N}x)}<0 \quad &\text{if } \rho>1,\, x>0\\\\
 \dfrac{\alpha(\rho-1- \frac{\rho}{N}x)x^2}{M(\alpha)/\gamma-(1-\alpha)(\rho-1-\frac{2\rho}{N}x)}<0 \quad &\text{if } \rho\leq 1,\, x>0\,,\\
\end{cases}
$$
and this concludes the proof. 
\end{proof}
\begin{remark} This result was earlier proved in \cite{saturatedSIS} in the framework of saturated SIS models, here we propose an alternative proof for the fractional context under exam. 
\end{remark}

\section{An optimal control problem for fractional SIS}\label{s3}
Fix $\beta,\gamma,N>0$, $\rho=\beta/\gamma$ as before.
We consider 
a controlled version of the infected population dynamics \eqref{ordSIS}: 
\begin{equation}\label{controlya}
\begin{cases} I'(t)=b_\alpha(I(t))+\xi(t)\quad \text{for }t\in(0,T)\\
I(0)=x\,,\end{cases}\end{equation}
where $x\geq 0$ and $\xi\in L^1((0,T))$ is the control function. We consider the following finite horizon optimal control problem:  minimize with respect to $\xi$  
\begin{equation}\label{pb}\begin{split}\text{} \int_0^T \frac{I^2(t)}{2}+\frac{\xi^2(t)}{2}dt+\phi(I(T))
\quad \text{subject to \eqref{controlya} and } I(t)\geq 0, t\in(0,T)\,,\end{split}\end{equation}
where $T>0$, $[0,T]$ is the given planning horizon, and $\phi(I(T))$ is the expected future cost,  depending on the size of the individuals remaining infected at time $T$. We assume that $\phi$ is a continuous, nonnegative function attaining its global minimum at $x=0$. The state constraint $I\geq 0$ is introduced for modeling reasons. 

We prove that the value function $u^\alpha(x,T)$ associated to the problem \eqref{pb}  is a viscosity solution of a related dynamic programming equation.  

To this end, we denote by $I(t;\xi,x)$ the absolutely continuous solution of \eqref{controlya}. We say that a trajectory-control pair $(I(t;\xi,x),\xi(t))$ is admissible if $\xi\in L^1((0,T))$ and $I(t;\xi,x)\geq 0$ for all $t\in (0,T)$, and we denote by $\mathcal A_T\subset L^1((0,T))$ the set of admissible controls. Then, we define the value function associated to \eqref{pb}
\begin{equation}\label{value}
u^\alpha(x,T):=\inf_{\xi \in \mathcal  A_T}\int_0^T \frac{I^2(t;\xi,x)}{2}+\frac{\xi^2(t)}{2}dt+\phi(I(T;\xi,x))
\end{equation}

\noindent and we consider the following Hamilton-Jacobi equation  \begin{equation}\label{hj}
\begin{cases}
u_t-b_\alpha(x)Du+\frac{1}{2}Du^2-\frac{1}{2}x^2=0& \text{in } (0,+\infty)\times(0,+\infty)\\
u|_{t=0}=\phi(x)& x\in(0,+\infty)\,,
\end{cases}
\end{equation}
{in which the Hamiltonian is provided by the Legendre transform  
$$\sup_{\xi\in \RR} \left\{-(b_\alpha(x)+\xi) Du-\frac{1}{2}x^2-\frac{1}{2}\xi^2\right\}\,,$$
where the supremum is achieved by the optimal control $\xi=-Du$.}
\begin{remark}
Equation \eqref{hj} is the dynamic programming equation associated to the problem of minimizing with respect to $\xi$ 
\begin{equation*}\int_0^T \frac{I^2(t)}{2}+\frac{\xi^2(t)}{2}dt+\phi(I(T))
\qquad \text{subject to \eqref{controlya}}\,,\end{equation*}
namely a version of \eqref{pb} in which the state constraint $I(t)\geq 0$ is removed. The possibility to  neglect the state constraint, combined with  the unboundedness of the control set and the nonlinearity of the drift $b_\alpha$ (which is not Lipschitz when $\alpha=1$) are elements of novelty that require an ad hoc analysis.
\end{remark}

In agreement with the classical theory, we have the following result.
\begin{theorem}\label{thmexistence}
For all $\alpha\in(0,1]$, the value function $u^\alpha$ defined in \eqref{value}
is a viscosity solution of \eqref{hj}. 
\end{theorem}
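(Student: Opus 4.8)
The plan is to follow the classical dynamic programming route: first establish enough regularity of $u^\alpha$ to give meaning to the viscosity notion, then prove a Dynamic Programming Principle (DPP), and finally exploit the DPP to verify separately the sub- and supersolution inequalities at every interior point $(x_0,t_0)\in(0,+\infty)\times(0,+\infty)$, together with the initial condition $u^\alpha(\cdot,t)\to\phi$ as $t\to0^+$. Before anything else I would record two elementary facts used repeatedly: since the running cost and $\phi$ are nonnegative, $u^\alpha\geq 0$; and testing \eqref{value} with the control $\xi\equiv 0$ (whose trajectory exists on $[0,T]$ because $b_\alpha$ is dissipative for large positive $I$, see Theorem \ref{p2}) yields a locally bounded upper bound for $u^\alpha$. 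Local boundedness, together with continuous dependence of the solutions of \eqref{controlya} on the initial datum over the finite horizon (on $(0,+\infty)$ the drift $b_\alpha$ is locally Lipschitz, and the energy estimate below keeps trajectories in a fixed compact set), then gives local continuity of $u^\alpha$ by comparing two nearby near-optimal trajectories.

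The core is the DPP: for $x_0>0$, $t_0>0$ and $0<s<t_0$,
\[
u^\alpha(x_0,t_0)=\inf_{\xi}\left\{\int_0^s\Big(\tfrac{1}{2}I^2(\tau)+\tfrac{1}{2}\xi^2(\tau)\Big)\,d\tau+u^\alpha(I(s),t_0-s)\right\},
\]
the infimum being over admissible controls. A delicate point is that admissibility here carries the state constraint $I\geq 0$; however, since we work at an interior point $x_0>0$ and, as explained below, near-optimal trajectories remain in an arbitrarily small neighbourhood of $x_0$ for $s$ small, the constraint is automatically inactive and the constrained and unconstrained infima coincide locally. This is precisely the mechanism announced in the Remark that lets \eqref{hj} be the relevant equation.

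For the \emph{subsolution} inequality I let $\varphi\in C^1$ touch $u^\alpha$ from above at $(x_0,t_0)$, insert into the DPP the constant control $\xi\equiv a\in\RR$, divide by $s$ and let $s\to 0^+$; since $I(s)\to x_0$ and $\tfrac{d}{ds}\varphi(I(s),t_0-s)=\varphi_x(b_\alpha(x_0)+a)-\varphi_t$ at $s=0$, I obtain $\varphi_t\leq b_\alpha(x_0)\varphi_x+a\varphi_x+\tfrac{1}{2}x_0^2+\tfrac{1}{2}a^2$ for every $a$, and optimizing the right-hand side at $a=-\varphi_x$ gives exactly $\varphi_t-b_\alpha(x_0)\varphi_x+\tfrac{1}{2}\varphi_x^2-\tfrac{1}{2}x_0^2\leq 0$. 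For the \emph{supersolution} inequality I let $\varphi$ touch from below, use $\varepsilon$-optimal controls $\xi_s$ in the DPP, and write $\varphi(x_0,t_0)-\varphi(I(s),t_0-s)=\int_0^s\big(\varphi_t-\varphi_x(b_\alpha(I)+\xi_s)\big)\,d\tau$; completing the square $\tfrac{1}{2}\xi_s^2+\varphi_x\xi_s\geq-\tfrac{1}{2}\varphi_x^2$ and letting $s\to 0^+$ yields the reverse inequality $\varphi_t-b_\alpha(x_0)\varphi_x+\tfrac{1}{2}\varphi_x^2-\tfrac{1}{2}x_0^2\geq 0$.

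The main obstacle—the step requiring the ad hoc analysis flagged in the Remark—is controlling the $\varepsilon$-optimal trajectories in the supersolution argument, where the control set is unbounded and $b_\alpha$ fails to be globally Lipschitz when $\alpha=1$. The key is the energy bound: since every cost term is nonnegative and $u^\alpha(I(s),t_0-s)\geq 0$, $\varepsilon$-optimality forces $\int_0^s\tfrac{1}{2}\xi_s^2\,d\tau\leq u^\alpha(x_0,t_0)+\varepsilon s=:C$, whence by Cauchy--Schwarz $\int_0^s|\xi_s|\,d\tau\leq\sqrt{2Cs}\to 0$. A continuation argument then confines the trajectory: on the maximal subinterval where $|I-x_0|\leq\delta$ one has $|b_\alpha(I)|\leq M_\delta$, so $|I(\tau)-x_0|\leq M_\delta s+\sqrt{2Cs}$, which is $<\delta$ for $s$ small; this proves $I(\tau)\to x_0$ uniformly and $I(\tau)>0$, simultaneously justifying the passage to the limit in the derivatives of $\varphi$ and rendering the state constraint inactive. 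Finally, the initial condition follows by the same two-sided scheme: the upper bound from $\xi\equiv 0$ and the lower bound from the energy estimate, using the continuity of $\phi$ and $I(t)\to x$ as $t\to 0^+$.
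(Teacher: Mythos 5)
Your overall architecture coincides with the paper's: the proof is reduced to the Dynamic Programming Principle (Proposition \ref{DPP}), the continuity of $u^\alpha$ (Proposition \ref{pc}), and confinement estimates for near-optimal trajectories obtained from the energy bound $\int_0^s\frac12\xi^2\,d\tau\le C$ plus a Cauchy--Schwarz type inequality --- this is precisely the estimate $|X(t)-x|\le tC^\alpha(R)+\sqrt t\,(C^\alpha(R)+2)$ of Lemma \ref{l456}, and your sub- and supersolution verifications (constant controls optimized at $a=-\varphi_x$; $\varepsilon$-optimal competitors and completion of the square) are exactly the computations the paper imports from \cite{FIL06} and from its own proof of Theorem \ref{thmva}.

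One step, however, would fail as written: the derivation of continuity in $x$ from ``continuous dependence of the solutions of \eqref{controlya} on the initial datum \dots by comparing two nearby near-optimal trajectories.'' Running a near-optimal control for $x$ from a nearby datum $y$ does produce a close trajectory by Gronwall (local Lipschitzness of $b_\alpha$ on the compact set where the energy estimate confines the motion is indeed enough), but that trajectory need not satisfy the state constraint $I\ge 0$: for a scalar equation with the same control, $y<x$ forces $I(\cdot;\xi,y)\le I(\cdot;\xi,x)$, so if the near-optimal trajectory from $x$ reaches or hugs the origin the perturbed one goes negative and is not an admissible competitor, hence yields no upper bound on $u^\alpha(y,T)$. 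The paper circumvents this in Lemma \ref{gammamodulus} by a different device: it prepends to the near-optimal curve for $x$ a short linear segment joining $y$ to $x$ in time $|x-y|$ (which is manifestly nonnegative), and absorbs the resulting time shift using the modulus $\nu_R$ of Lemma \ref{l456}. With that substitution, the rest of your proposal --- the inactivity of the constraint at interior points for small times, the two one-sided viscosity inequalities, and the initial condition --- matches the paper's argument.
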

\begin{proof} 
The proof  is based on showing that $u^{\alpha}$ simultaneously fulfills the definition of viscosity sub-solution and of viscosity super-solution. Technical computations can be easily adapted from  \cite[Theorem 10]{FIL06} (see also the proof of Theorem \ref{thmva} below) and we omit them for brevity. The main ingredients are the Dynamic Programming Principle (Proposition \ref{DPP}) and the continuity of $u^{\alpha}$  (Proposition \ref{pc}), that are proved in detail the following subsections. 
\end{proof}

\subsection{Auxiliary results for the proof of Theorem \ref{thmexistence}}\label{thmexp}
 Let $\RR_+:=(0,+\infty)$. For $(x,T) \in [0,+\infty)\times\RR_+$, let $\mathcal C^+(x,T)$  denote the space of non-negative, absolutely continuous functions  $X: [0,T]\to[0,+\infty)$  such that $X(0)=x$. Then, the value function $u^\alpha$ defined in \eqref{value} can be rewritten in a form suitable for our purposes: 
\begin{equation}\label{udef+}
\begin{split}
    u^\alpha(x,T)=\inf\left\{\int_0^T\frac{1}{2}X(t)^2+ \frac{1}{2}(b_\alpha(X(t))-\dot X(t))^2+\phi(x(T))\right.\\\mid X\in \mathcal C^+(x,T)\Bigg\}.
\end{split}\end{equation}

The proof of Theorem \ref{thmexistence} relies on several preliminary results, that can be summarized in a set of estimates for $u^\alpha$, proved in Section \ref{estss}, the Dynamic Programming Principle, and a continuity result for $u^\alpha$, proved in Section \ref{conss} below. 

\subsubsection{Estimates for $u^\alpha$}\label{estss} For $R>0$, we set
\begin{align}&\label{C1} C_1^\alpha(R):=(R+||b_\alpha||_{L^\infty([0,R])})^2+||\phi||_{L^\infty([0,R])}\,, \\
& \label{C2} C_2^\alpha(R):=\sup\left\{y\in(0,+\infty)\mid \hat B_\alpha(y)\leq ||\hat B_\alpha||_{L^\infty([0,R])}+1+C^\alpha_1(R)\right\}\,,\end{align}
where 
\begin{equation}\label{Bdef}
\hat B_\alpha(x):=\int_0^x -b_\alpha(s)ds.\end{equation}
 Note that, since $\hat B_\alpha(0)=0$, $\hat B_\alpha$ is continuous,  and $\hat B_\alpha(x)\to+\infty$ as $x\to+\infty$, then $C_2^\alpha(R)\in(0,+\infty)$ for all $R>0$. In particular $C_2^\alpha(R)\geq R$. Indeed, clearly $\hat B_\alpha(R)\leq ||\hat B_\alpha||_{L^\infty([0,R])}$, hence $R\in \{y\mid \hat B_\alpha(y)\leq ||\hat B_\alpha||_{L^\infty([0,R])}+1+C^\alpha_1(R), y\in(0,+\infty)\}$ and, consequently $R\leq C_2^\alpha(R)$.  
Finally define
 \begin{equation}C_3^\alpha(R):=R^2+||b_\alpha||_{L^\infty([0,R])}^2.\end{equation}
We remark that $C_1^\alpha(R)$ and $C_2^\alpha(R)$ depend on $\phi$ only via $||\phi||_{L^\infty([0,R])}$, whereas $C_3^\alpha(R)$ is independent from $\phi$.

\medskip
The next three lemmas provide estimates for $u^\alpha$ related to $C_1^\alpha$, $C_2^\alpha$ and $C_3^\alpha$. 
\begin{lemma}\label{l28}
For each $\alpha\in[0,1]$ and $R>0$ 
$$\phi(0)\leq u^\alpha(x,T)\leq C_1^\alpha(R) \quad \text{ for } (x,T)\in [0,R]\times(0,\infty).$$ 
In particular $u^\alpha(0,T)=\phi(0)$, i.e., for all $T>0$, $u^\alpha(\cdot,T)$ attains its global minimum at $x=0$. 
\end{lemma}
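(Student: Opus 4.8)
The plan is to prove the two inequalities separately, deriving the lower bound directly from nonnegativity and establishing the upper bound by exhibiting explicit competitor trajectories in $\mathcal C^+(x,T)$. For the lower bound I would work with the representation \eqref{udef+}: the running integrand $\frac12 X^2+\frac12(b_\alpha(X)-\dot X)^2$ is a sum of squares, hence nonnegative, and since $\phi$ is nonnegative with global minimum at $0$ we have $\phi(X(T))\ge\phi(0)$ for every admissible $X$. Thus each term in the infimand is bounded below by $\phi(0)$, and taking the infimum over $X\in\mathcal C^+(x,T)$ gives $u^\alpha(x,T)\ge\phi(0)$.

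For the upper bound the idea is to select a convenient $X\in\mathcal C^+(x,T)$ and bound its cost by $C_1^\alpha(R)$. The difficulty is that no single elementary trajectory works for all $T$: a constant trajectory $X\equiv x$ accumulates running cost proportional to $T$ and is wasteful for large horizons, whereas steering all the way down to the equilibrium $0$ in a fixed short time forces a velocity of size $x/\tau$, producing a control cost of order $x^2/\tau$ that blows up as the available time shrinks. I would therefore split into two overlapping regimes. When $T\le 2$, take $X\equiv x$, whose velocity is zero and whose associated control is the constant $-b_\alpha(x)$; writing $b:=\|b_\alpha\|_{L^\infty([0,R])}$ and using $X(t)\in[0,R]$, the running cost is $\frac{T}{2}(x^2+b_\alpha(x)^2)\le\frac{T}{2}(R^2+b^2)\le R^2+b^2\le (R+b)^2$, while the terminal cost is $\phi(x)\le\|\phi\|_{L^\infty([0,R])}$, so the total is at most $C_1^\alpha(R)$.

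When $T\ge 1$, take instead the trajectory that interpolates linearly from $x$ to $0$ on $[0,1]$ and remains at $0$ on $[1,T]$. Since $b_\alpha(0)=0$ (the factor $I$ in \eqref{ordSIS} vanishes at $0$), the integrand is zero on $[1,T]$, so the running cost reduces to the integral over $[0,1]$; bounding $X\in[0,R]$ and $|\dot X|=x\le R$ yields running cost $\le \frac12 R^2+\frac12(b+R)^2\le (b+R)^2$, with terminal cost $\phi(0)\le\|\phi\|_{L^\infty([0,R])}$, again summing to $C_1^\alpha(R)$. As the two regimes cover all of $(0,+\infty)$ (and overlap on $1\le T\le 2$), the upper bound holds for every $T$. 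Finally, for the claimed equality at $x=0$ I note that $X\equiv 0$ lies in $\mathcal C^+(0,T)$ and, again because $b_\alpha(0)=0$, has zero running cost and terminal cost $\phi(0)$; hence $u^\alpha(0,T)\le\phi(0)$, which together with the lower bound gives $u^\alpha(0,T)=\phi(0)$. The lower bound $u^\alpha(x,T)\ge\phi(0)=u^\alpha(0,T)$ then shows that the global minimum in $x$ is attained at $0$.

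The main obstacle is precisely the uniform-in-$T$ control of the running cost in the upper bound: the two-competitor construction, and the verification that the admissible time-regimes overlap so as to exhaust $(0,+\infty)$, is the crux of the argument, while the remaining steps reduce to elementary estimates on the chosen trajectories.
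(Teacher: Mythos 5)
Your proof is correct and follows essentially the same route as the paper: the lower bound comes from nonnegativity of the running integrand together with $\phi(X(T))\ge\phi(0)$, the upper bound from an explicit competitor, and $X\equiv 0$ (using $b_\alpha(0)=0$) gives $u^\alpha(0,T)=\phi(0)$. The only difference is that the paper manages with a \emph{single} competitor for all $T>0$, namely the linear descent $X(t)=x(1-t)$ on $[0,1]$ extended by $0$, simply truncated at $T$ when $T<1$ — the terminal cost is then bounded by $\|\phi\|_{L^\infty([0,R])}$ wherever the truncated trajectory lands, so your two-regime split (and the stated worry that descending to $0$ in a short horizon forces a blow-up of the control cost) is not actually needed.
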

\begin{proof}
The lower estimate readily follows by the assumption that $\phi$ attains its global minimum at $x=0$. Indeed, for all $T\geq0$ and $\varepsilon>0$ there exists $X\in \mathcal C^+(x,T)$ such that 
\begin{align*}u^\alpha(x,T)+\varepsilon&> \int_0^T \frac{1}{2}X(t)^2+\frac{1}{2}(b_\alpha(X(t))-\dot X(t))^2dt+\phi(X(T))\\&\geq \phi(X(T))\geq \phi(0).\end{align*}
Therefore $u^\alpha(x,T)\geq \phi(0)$ for all $x,T\geq 0$. Moreover, choosing $X(t)\equiv 0$, we have for all $T>0$
$$u^\alpha(0,T)\leq \int_0^T \frac{1}{2}X(t)^2+\frac{1}{2}(b_\alpha(X(t))-\dot X(t))^2ds+\phi(X(T))=\phi(0).$$ 
Hence $\phi(0)=u(0,T)\leq u(x,T)$ for all $x\geq 0$, $T>0$. 

To prove the upper estimate, consider the curve $X\in \mathcal C^+(x,T)$ defined by
$$X(t)=\begin{cases}
x-tx &\text{for } 0\leq t\leq 1\\
0 & \text{ for } t>1
\end{cases}$$
so that $0\leq X(t)\leq x\leq R$ and $\dot X(t)=-x$ for $t\in [0,1]$. 
Since $b_\alpha(0)=0$, then  $T\geq 1$ implies
$$\int_0^T \frac{1}{2}X(t)^2+\frac{1}{2}(b_\alpha(X(t))-\dot X(t))^2)dt=\int_0^1\frac{1}{2}X(t)^2+\frac{1}{2}(b_\alpha(X(t))-\dot X(t))^2)dt$$
and $\phi(X(T))=\phi(0)$.
For a general $T>0$ we then have 
\begin{align*}
u^\alpha(x,T)\leq& \int_0^T \frac{1}{2}X(t)^2+\frac{1}{2}(b_\alpha(X(t))-\dot X(t))^2dt+\phi(x(T))\\
\leq&\int_0^1 \frac{1}{2}X(t)^2+\frac{1}{2}(b_\alpha(X(t))-\dot X(t))^2dt+||\phi||_{L^\infty([0,R])}\\
=&\int_0^1\frac{1}{2} x^2(1-t)^2+\frac{1}{2}(b_\alpha(x(1-t))+x)^2dt+||\phi||_{L^\infty([0,R])}\\
\leq&\int_0^1\frac{1}{2} R^2+\frac{1}{2}(||b_\alpha||_{L^\infty([0,R])}+R)^2dt+||\phi||_{L^\infty([0,R])}\\
=& \,R^2+\frac{1}{2}||b_\alpha||_{L^\infty([0,R])}^2+R||b_\alpha||_{L^\infty([0,R])}+||\phi||_{L^\infty([0,R])}\\
<&\, C^\alpha_1(R).
\end{align*}\end{proof}

 \begin{lemma}\label{l29}Fix $\alpha\in[0,1]$. For each $R>0$, if  $(x,T)\in [0,R]\times \RR_+$,  $X(t)\in \mathcal C^+(x,T)$ and
\begin{equation}\label{cond}u^\alpha(x,T)+1\geq \int_0^T \frac{1}{2}X(t)^2+\frac{1}{2}(b_\alpha(X(t))-\dot X(t))^2)dt+\phi(X(T)),\end{equation}
then for $t\in(0,T]$
$$|X(t)|\leq C_2^\alpha(R).$$
\end{lemma}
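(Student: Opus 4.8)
The plan is to exploit the special structure of the running cost: the integrand $\frac12(b_\alpha(X)-\dot X)^2$ is designed precisely so that its cross term is a total derivative of $\hat B_\alpha$ along the trajectory. First I would invoke the upper bound from Lemma \ref{l28}, namely $u^\alpha(x,T)\le C_1^\alpha(R)$ for $x\in[0,R]$, to turn the admissibility hypothesis \eqref{cond} into the explicit estimate
$$\int_0^T \frac{1}{2} X(t)^2+\frac{1}{2}\big(b_\alpha(X(t))-\dot X(t)\big)^2\,dt+\phi(X(T))\le C_1^\alpha(R)+1.$$
Since $\phi\ge 0$ and $X^2\ge 0$, each term on the left is nonnegative, so in particular $\int_0^T (b_\alpha(X)-\dot X)^2\,dt\le 2\big(C_1^\alpha(R)+1\big)$.

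Next I would differentiate $\hat B_\alpha$ along $X$. Because $\hat B_\alpha\in C^1$ with $\hat B_\alpha'=-b_\alpha$ by \eqref{Bdef}, and $X$ is absolutely continuous, the composition $\hat B_\alpha\circ X$ is absolutely continuous with $\frac{d}{dt}\hat B_\alpha(X(t))=-b_\alpha(X(t))\dot X(t)$ for a.e.\ $t$. Using the elementary expansion $-2b_\alpha(X)\dot X=(b_\alpha(X)-\dot X)^2-b_\alpha(X)^2-\dot X^2$ and integrating from $0$ to an arbitrary $s\in(0,T]$ yields
$$2\hat B_\alpha(X(s))=2\hat B_\alpha(x)+\int_0^s (b_\alpha(X)-\dot X)^2\,dt-\int_0^s b_\alpha(X)^2\,dt-\int_0^s \dot X^2\,dt.$$
Dropping the last two (nonnegative) integrals, bounding $\hat B_\alpha(x)\le \|\hat B_\alpha\|_{L^\infty([0,R])}$ since $x\in[0,R]$, and extending $\int_0^s$ to $\int_0^T$ for the surviving nonnegative integrand, the cost bound of the previous step gives
$$\hat B_\alpha(X(s))\le \|\hat B_\alpha\|_{L^\infty([0,R])}+C_1^\alpha(R)+1.$$

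Finally, I would observe that the right-hand side above is exactly the threshold appearing in the definition \eqref{C2} of $C_2^\alpha(R)$. Thus, for every $s\in(0,T]$ with $X(s)>0$, the value $X(s)$ belongs to the set $\{y\in(0,+\infty)\mid \hat B_\alpha(y)\le \|\hat B_\alpha\|_{L^\infty([0,R])}+1+C_1^\alpha(R)\}$, whose supremum is $C_2^\alpha(R)$; hence $X(s)\le C_2^\alpha(R)$. The same bound holds trivially when $X(s)=0$, since $C_2^\alpha(R)\ge R>0$. As $X\ge 0$ gives $|X(s)|=X(s)$, this is the claim.

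The only delicate point is the chain-rule step, since $X$ need not be $C^1$: one must justify that $\hat B_\alpha\circ X$ is absolutely continuous with a.e.\ derivative $-b_\alpha(X)\dot X$, so that the fundamental theorem of calculus applies. This follows from composing the $C^1$ map $\hat B_\alpha$ with the absolutely continuous map $X$; everything else reduces to discarding nonnegative terms and matching the constants to the definition of $C_2^\alpha(R)$.
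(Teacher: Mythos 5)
Your proof is correct and follows essentially the same route as the paper: both reduce \eqref{cond} via Lemma \ref{l28} to the bound $1+C_1^\alpha(R)$, extract the estimate $\int_0^s -b_\alpha(X)\dot X\,dt\le 1+C_1^\alpha(R)$ from the nonnegativity of the running cost (your identity $-2b_\alpha(X)\dot X=(b_\alpha(X)-\dot X)^2-b_\alpha(X)^2-\dot X^2$ is just the paper's completion of the square made explicit), integrate to get $\hat B_\alpha(X(s))\le \|\hat B_\alpha\|_{L^\infty([0,R])}+1+C_1^\alpha(R)$, and conclude from the definition of $C_2^\alpha(R)$. Your explicit justification of the chain rule for $\hat B_\alpha\circ X$ with $X$ merely absolutely continuous is a welcome extra detail that the paper leaves implicit.
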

\begin{proof}
Let $X\in\mathcal C^+(x,T)$ satisfy \eqref{cond}. Then $X(t)\geq 0$ by the definition of $\mathcal C^+(x,T)$. To prove $X(t)\leq C_2^\alpha(R)$, note that by Lemma \ref{l28}
$$
\int_0^T \frac{1}{2}X(t)^2+\frac{1}{2}(b_\alpha(X(t))-\dot X(t))^2dt+\phi(X(T))\leq 1+C^\alpha_1(R).$$ 
In particular, also recalling that we assumed $\phi$ to be non negative, for all $\tau \in [0,T]$
$$
\int_0^{\tau} \frac{1}{2}X(t)^2+\frac{1}{2}(b_\alpha(X(t))-\dot X(t))^2)dt\leq 1+C^\alpha_1(R).$$ 
From this we deduce
$$
\int_0^\tau-b_\alpha(X(t))\dot X(t)dt\leq 1+C_1(R).$$ 
Integrating the left handside of above expression, we get
\begin{equation}\label{Balpha}
\hat B_\alpha(X(\tau))\leq \hat B_\alpha(x)+1+C^\alpha_1(R)\leq ||\hat B_\alpha||_{L^\infty([0,R])}+1+C^\alpha_1(R).\end{equation}
Since $X(\tau)\geq 0$, we deduce the claimed inequality $X(\tau)\leq C^\alpha_2(R)$ for all $\tau\in[0,T]$. 
\end{proof}
\begin{lemma}\label{l42}
For all $\alpha\in[0,1]$ and for all  $(x,T)\in [0,+\infty)\times \RR_+$ it holds
$$u^\alpha(x,T)\leq \phi(x)+\frac{1}{2}(x^2+(b_\alpha(x))^2)T.$$
\end{lemma}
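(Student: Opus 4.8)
The plan is to bound $u^\alpha(x,T)$ from above by evaluating the cost functional in its variational form \eqref{udef+} on a single, conveniently chosen competitor curve. Since $u^\alpha(x,T)$ is defined as an infimum over all $X\in\mathcal C^+(x,T)$, every admissible $X$ automatically furnishes an upper bound; the whole task reduces to picking one whose cost reproduces the right-hand side of the claim.

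The natural candidate is the constant curve $X(t)\equiv x$ on $[0,T]$. Because $x\geq 0$, this curve is non-negative, it is trivially absolutely continuous, and it satisfies $X(0)=x$, so that $X\in\mathcal C^+(x,T)$. With $\dot X\equiv 0$, the two contributions to the running cost become the constants $\frac{1}{2}X(t)^2=\frac{1}{2}x^2$ and $\frac{1}{2}(b_\alpha(X(t))-\dot X(t))^2=\frac{1}{2}b_\alpha(x)^2$, while the terminal cost is simply $\phi(X(T))=\phi(x)$.

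Substituting into \eqref{udef+} and integrating the (time-independent) integrand over $[0,T]$ then yields
$$u^\alpha(x,T)\leq \int_0^T \tfrac{1}{2}\bigl(x^2+b_\alpha(x)^2\bigr)\,dt+\phi(x)=\tfrac{1}{2}\bigl(x^2+b_\alpha(x)^2\bigr)T+\phi(x),$$
which is exactly the asserted inequality.

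I expect no real obstacle here: the only point needing (minimal) care is verifying admissibility of the competitor, i.e.\ that the constant curve lies in $\mathcal C^+(x,T)$, which holds precisely because the state constraint $X\geq 0$ is compatible with $x\geq 0$. In particular no regularity, boundedness, or Lipschitz property of $b_\alpha$ is invoked, so the argument is uniform in $\alpha\in[0,1]$.
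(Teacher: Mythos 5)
Your proof is correct and coincides with the paper's own argument: both take the constant competitor $X(t)\equiv x$ in the variational formula \eqref{udef+} and evaluate the cost directly. Nothing further is needed.
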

\begin{proof}
Choose $X(t)\equiv x$ and remark that
\begin{align*}u^\alpha(x,T)&\leq \int_0^T \frac{1}{2}X(t)^2+\frac{1}{2}(b_\alpha(X(t))-\dot X(t))^2dt+\phi(x(T))\\
&=\frac{1}{2}(x^2+b_\alpha(x)^2)T+\phi(x).\end{align*}
\end{proof}
\subsubsection{Continuity of $u^\alpha$}\label{conss} The next two results investigate the  dependence of $u^\alpha$ on the initial datum $\phi$ and the local uniform continuity of $u^\alpha$ with respect to the space variable $x$, respectively. 
For any continuous function 
$f:{D}\subseteq [0,+\infty)\to \RR$ we call \emph{modulus (of continuity)} of $f$ any increasing, continuous function $\omega : [0, \infty)\times[0,\infty)\to [0, \infty)$ such that $\omega(0) = 0$, $\omega(r) > 0$ for every $r > 0$ and $|f(x_1) - f(x_2)| \leq  \omega(|x_1 - x_2|)$ for all $x_1, x_2\in  {D}$. We denote by $\omega_{\phi,R}$ the modulus of continuity of $\phi$ restricted to $[0,R]$.

\begin{lemma} \label{l456} Let $\alpha\in[0,1]$ and $R>0$. Define  
$$C^\alpha(R):=\max\{C^\alpha_1(R),||b_\alpha||_{L^\infty([0,C^\alpha_2(R)])}\}.$$ 

For all $(x,T)\in [0,R]\times\RR_+$ if $X\in \mathcal C^+(x,T)$ and
$$u^\alpha(x,T)+1\geq  \int_0^T \frac{1}{2}X(t)^2+\frac{1}{2}(b_\alpha(X(t))-\dot X(t))^2dt+\phi(x(T)),$$
then
\begin{equation}\label{sigma}|X(t)-x|\leq \sigma_R(t):=tC^\alpha(R)+\sqrt{t}(C^\alpha(R)+2)\quad \text{for } t\in[0,T].\end{equation}
Moreover,  for all $(x,t)\in [0,R]\times\RR_+$
\begin{equation}\label{nu}|u^\alpha(x,t)-\phi(x)|\leq \nu_R(t):=\max\{C_3^\alpha(R)t,\omega_{\phi,R}( \sigma_R(t))\}.\end{equation}
In particular $\nu_R$ depends on $\phi$ only via $\omega_{\phi,R}$ and $||\phi||_{L^\infty([0,R])}$.
\end{lemma}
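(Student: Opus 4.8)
The plan is to establish the two assertions in sequence: first the displacement bound \eqref{sigma}, which is the engine of the whole argument, and then the continuity estimate \eqref{nu}, for which \eqref{sigma} together with Lemmas \ref{l28} and \ref{l42} supplies exactly the missing ingredients.

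First I would prove \eqref{sigma}. Fix $(x,T)\in[0,R]\times\RR_+$ and $X\in\mathcal C^+(x,T)$ satisfying the near-optimality hypothesis. Combining this hypothesis with the bound $u^\alpha(x,T)\le C_1^\alpha(R)$ of Lemma \ref{l28} and discarding the non-negative terms $\frac{1}{2}X^2$ and $\phi(X(T))$, I obtain the energy estimate $\int_0^T(b_\alpha(X(t))-\dot X(t))^2\,dt\le 2+2C_1^\alpha(R)$. By Lemma \ref{l29} the same hypothesis forces $0\le X(t)\le C_2^\alpha(R)$ for every $t$, whence $|b_\alpha(X(t))|\le\|b_\alpha\|_{L^\infty([0,C_2^\alpha(R)])}\le C^\alpha(R)$. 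Writing $X(t)-x=\int_0^t\dot X(s)\,ds=\int_0^t b_\alpha(X(s))\,ds-\int_0^t\big(b_\alpha(X(s))-\dot X(s)\big)\,ds$, the first integral is bounded by $C^\alpha(R)\,t$ and, by Cauchy--Schwarz, the second by $\sqrt t\,\big(\int_0^T(b_\alpha(X)-\dot X)^2\big)^{1/2}\le\sqrt t\,\sqrt{2+2C^\alpha(R)}$, using $C_1^\alpha(R)\le C^\alpha(R)$. The proof of \eqref{sigma} is then closed by the elementary inequality $\sqrt{2+2C^\alpha(R)}\le C^\alpha(R)+2$, which holds because $(C^\alpha(R)+2)^2-(2+2C^\alpha(R))=(C^\alpha(R))^2+2C^\alpha(R)+2>0$.

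For \eqref{nu} I would argue by two one-sided estimates. The upper bound is immediate from Lemma \ref{l42}: for $x\in[0,R]$ one has $\frac{1}{2}(x^2+b_\alpha(x)^2)\le\frac{1}{2}C_3^\alpha(R)$, so $u^\alpha(x,t)-\phi(x)\le C_3^\alpha(R)\,t\le\nu_R(t)$. For the lower bound, fix $\varepsilon\in(0,1)$ and pick $X\in\mathcal C^+(x,t)$ realizing the infimum up to $\varepsilon$; since $\varepsilon<1$ the near-optimality hypothesis holds, so \eqref{sigma} gives $|X(t)-x|\le\sigma_R(t)$, while Lemma \ref{l29} keeps $X(t)$ in the fixed bounded interval $[0,C_2^\alpha(R)]$. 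Discarding the non-negative running cost then yields $u^\alpha(x,t)+\varepsilon\ge\phi(X(t))\ge\phi(x)-\omega_{\phi,R}(\sigma_R(t))$ from the continuity of $\phi$ and \eqref{sigma}; letting $\varepsilon\to0$ gives $\phi(x)-u^\alpha(x,t)\le\omega_{\phi,R}(\sigma_R(t))\le\nu_R(t)$. The two estimates combine to \eqref{nu}, and the concluding remark follows by tracing the $\phi$-dependence of the constants, which enters only through $\|\phi\|_{L^\infty([0,R])}$ (inside $C_1^\alpha$, $C_2^\alpha$, hence $\sigma_R$) and through $\omega_{\phi,R}$.

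The main obstacle is the displacement bound \eqref{sigma}: the control $\xi=\dot X-b_\alpha(X)$ is only square-integrable, so $\dot X$ admits no pointwise bound and one must trade integrability against time through Cauchy--Schwarz. This is precisely where the a priori $L^\infty$ bound of Lemma \ref{l29} is indispensable, as it confines $X$ to $[0,C_2^\alpha(R)]$ and thereby renders $b_\alpha$ bounded along the trajectory even in the delicate case $\alpha=1$, where $b_\alpha$ is not globally Lipschitz. A secondary point needing care in the lower bound of \eqref{nu} is that the near-optimal endpoint $X(t)$ may leave $[0,R]$; one estimates $|\phi(X(t))-\phi(x)|$ via the continuity of $\phi$ on the bounded range $[0,C_2^\alpha(R)]$ provided by Lemma \ref{l29}, combined with the displacement estimate \eqref{sigma}.
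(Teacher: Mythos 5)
Your proof is correct and follows essentially the same route as the paper: both rest on the bounds of Lemmas \ref{l28} and \ref{l29} to confine $X$ and $b_\alpha(X)$, convert the $L^2$ energy bound on $b_\alpha(X)-\dot X$ into an $L^1$ bound with a $\sqrt t$ factor, and then split \eqref{nu} into the Lemma \ref{l42} upper estimate and a near-optimal-trajectory lower estimate via $\omega_{\phi,R}(\sigma_R(t))$. The only cosmetic difference is that you use Cauchy--Schwarz where the paper integrates the pointwise inequality $\tfrac12 y^2\ge A|y|-A^2$ with $A=1/\sqrt\tau$; these yield the same bound, and your explicit remark that the endpoint $X(t)$ may exit $[0,R]$ (so the modulus must really be taken on $[0,C_2^\alpha(R)]$) is a point the paper only addresses implicitly.
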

\begin{proof}
Assume that  $X\in \mathcal C^+(x,T)$ satisfies
$$u^\alpha(x,T)+1\geq  \int_0^T \frac{1}{2}X(t)^2+\frac{1}{2}(b_\alpha(X(t))-\dot X(t))^2dt+\phi(x(T)).$$
By Lemma \ref{l28} and Lemma \ref{l29}, we respectively get
\begin{align*}&u^\alpha(x,T)\leq C^\alpha_1(R)\leq C^\alpha(R) \qquad \text{for } x\in [0,R]\times (0,+\infty)\,,\\
&|X(t)|\leq C_2^\alpha(R)\qquad \text{for } t\geq 0.\end{align*}
Moreover, it follows from the latter inequality --see also the definition of $C^\alpha(R)$-- that
$$|b_\alpha(X(t))|\leq C^\alpha(R) \qquad \text{for } t\geq 0.$$
Fix $\tau\in(0,T]$. Since $\phi$ is non negative, we have 
\begin{equation}\label{oo1}\begin{split}C^\alpha(R)+1\geq\int_0^\tau \frac{1}{2}X(t)^2+\frac{1}{2}(b_\alpha(X(t))-\dot X(t))^2dt\\ \geq \int_0^\tau \frac{1}{2}(b_\alpha(X(t))-\dot X(t))^2dt\,.\end{split} \end{equation}
Moreover, since for all $A>0$ and $y\in \RR$
$$\frac{1}{2}y^2\geq A |y|-A^2,$$
choosing $A=1/\sqrt{\tau}$ and $y=y(t)=b^\alpha(X(t))-\dot X(t)$, we get 
\begin{align*}
\int_0^\tau \frac{1}{2}(b^\alpha(X(t))-\dot X(t))^2dt&\geq \frac{1}{\sqrt{\tau}}\int_0^\tau |b_\alpha(X(t))-\dot X(t)|dt-1\\
&\geq \frac{1}{\sqrt{\tau}}\int_0^\tau |\dot X(t)|- |b_\alpha(X(t))|dt-1\\
&\geq \frac{1}{\sqrt{\tau}}\int_0^t |\dot X(t)|- C^\alpha(R)dt-1\\
&\geq\frac{1}{\sqrt{\tau}}|X(\tau)-x|- \sqrt{\tau}C^\alpha(R)-1
\end{align*}
and this, together with \eqref{oo1} implies \eqref{sigma}.

Now, by Lemma \ref{l42}, we readily get
$$u^\alpha(x,t)-\phi(x)\leq C^\alpha_3(R)t\leq \nu_R(t)\quad \text{for }(x,t)\in [0,R]\times\RR_+.$$
On the other hand, for $t\in \RR_+$ and $\varepsilon\in(0,1)$ there exists $X\in \mathcal C^+(x,t)$ such that
$$u^\alpha(x,t)+\varepsilon> \int_0^t \frac{1}{2}X^2(s)+\frac{1}{2}(b(X(s))-\dot X(s))^2)+\phi(x(t))\geq \phi(X(t)).$$
In view of the arguments above, $X$  verifies \eqref{sigma}. 
This, together with the arbitrariness of $\varepsilon$,  implies
$$u^\alpha(x,t)-\phi(x)\geq -\omega_{\phi,R}(\sigma_R(t))\geq -\nu_R(t) $$
and completes the proof. 
\end{proof}

\begin{lemma}\label{gammamodulus} Let $\alpha\in[0,1]$. For each $R>0$ there exists a modulus $\gamma_R$ for $u^\alpha(\cdot,T)$ in $[0,R]$, for all $T>0$. More precisely,  for each $x,y\in [0,R]$ and $T>0$
$$|u^\alpha(x,T)-u^\alpha(y,T)|\leq \gamma_R(|x-y|).$$
Moreover $\gamma_R$ depends on $\phi$ only via $\omega_{R,\phi}$ and $||\phi||_{L^\infty([0,R])}$.
\end{lemma}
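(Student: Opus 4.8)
The plan is to prove the one-sided estimate $u^\alpha(y,T)-u^\alpha(x,T)\le \gamma_R(|x-y|)$ for all $x,y\in[0,R]$ and all $T>0$; since $|x-y|$ is symmetric, exchanging the roles of $x$ and $y$ then upgrades this to the two-sided bound. Fix $\varepsilon\in(0,1)$ and, using the representation \eqref{udef+}, pick a near-optimal curve $X\in\mathcal C^+(x,T)$ with cost at most $u^\alpha(x,T)+\varepsilon$. Such an $X$ satisfies \eqref{cond}, so Lemma \ref{l29} gives $0\le X(t)\le C_2^\alpha(R)$ on $[0,T]$; moreover, discarding the nonnegative terms $\tfrac12X^2$ and $\phi(X(T))$ and invoking Lemma \ref{l28} yields the a priori bound $\int_0^T\tfrac12(b_\alpha(X)-\dot X)^2\,dt\le C_1^\alpha(R)+1$, which will tame the cross term below. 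Since $b_\alpha$ is smooth on $(x_0,+\infty)\supset[0,C_2^\alpha(R)+R]$ (Theorem \ref{p2}), it is Lipschitz there with some constant $L=L_R^\alpha$, which depends on $\phi$ only through $C_2^\alpha(R)$, hence only through $\|\phi\|_{L^\infty([0,R])}$.

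I then split according to whether the horizon exceeds $h:=|x-y|$. In the regime $T>h$, I build a competitor from $y$ by shifting $X$ on the short window $[0,\delta]$ with $\delta:=h$ and leaving it equal to $X$ afterwards. With $\psi(t):=\max\{1-t/\delta,\,0\}$, set $Y(t):=\max\{X(t)+(y-x)\psi(t),\,0\}$. Then $Y$ is absolutely continuous and nonnegative, $Y(0)=\max\{y,0\}=y$, so $Y\in\mathcal C^+(y,T)$, and since $\delta<T$ we have $\psi(T)=0$, whence $Y(T)=X(T)$ and $\phi(Y(T))=\phi(X(T))$. On the clamped set $\{Y=0\}$ the running cost vanishes because $b_\alpha(0)=0$ and $\dot Y=0$ a.e.\ there, so truncation never adds cost; on $\{Y>0\}$ one has $Y=X+(y-x)\psi$. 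Writing $a:=b_\alpha(X)-\dot X$ and $e:=(b_\alpha(Y)-b_\alpha(X))-(y-x)\dot\psi$ on $\{Y>0\}\cap(0,\delta)$, the only net contribution comes from $[0,\delta]$ and equals $\int_0^\delta\big(ae+\tfrac12e^2\big)\,dt$ up to the harmless term $\int_0^\delta\tfrac12(Y^2-X^2)\,dt=O(\delta h)$.

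Using $|Y-X|\le h\psi\le h$, $|\dot\psi|=1/\delta$ and the Lipschitz bound gives $|e|\le Lh+h/\delta$, so $\int_0^\delta\tfrac12e^2\,dt\le\tfrac\delta2(Lh+h/\delta)^2$, while Cauchy--Schwarz together with the a priori bound on $\int a^2$ yields $\int_0^\delta ae\,dt\le(2C_1^\alpha(R)+2)^{1/2}\,\delta^{1/2}(Lh+h/\delta)$. Substituting $\delta=h$ makes every term of order $\sqrt h$, so there is an increasing continuous function $\Gamma_R$ with $\Gamma_R(0)=0$, depending on $\phi$ only through $\|\phi\|_{L^\infty([0,R])}$, with $u^\alpha(y,T)-u^\alpha(x,T)\le\Gamma_R(h)+\varepsilon$; letting $\varepsilon\to0$ removes $\varepsilon$. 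In the complementary regime $T\le h$, where the short-window construction is not uniform in $T$, I instead use the closeness of $u^\alpha$ to $\phi$: by \eqref{nu} and the triangle inequality, and since $\nu_R$ is increasing,
$$|u^\alpha(x,T)-u^\alpha(y,T)|\le 2\nu_R(T)+|\phi(x)-\phi(y)|\le 2\nu_R(h)+\omega_{\phi,R}(h).$$
Taking $\gamma_R(s):=\max\{\Gamma_R(s),\,2\nu_R(s)+\omega_{\phi,R}(s)\}$ gives a modulus covering both cases for every $T>0$, and since $\Gamma_R$, $\nu_R$ and $\omega_{\phi,R}$ depend on $\phi$ only via $\|\phi\|_{L^\infty([0,R])}$ and $\omega_{\phi,R}$, so does $\gamma_R$. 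The main obstacle is exactly the state constraint $Y\ge0$ during the catch-up; I expect it to be resolved cleanly by the truncation $\max\{\cdot,0\}$, which is admissible and cost-free because $b_\alpha(0)=0$, combined with the balancing choice $\delta=h$ that keeps the catch-up cost of order $\sqrt h$ uniformly in $T$.
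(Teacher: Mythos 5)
Your argument is correct, and in the main regime $T>|x-y|$ it takes a genuinely different route from the paper's. The paper's competitor from $y$ first travels linearly to $x$ in time $|x-y|$ (cost $\le\tilde C^\alpha(R)|x-y|$) and then follows the near-optimal curve $X$ with a time lag, so it terminates at $X(T-|x-y|)$ rather than $X(T)$; the resulting mismatch in the exit cost is absorbed by splitting the tail via the Dynamic Programming Principle and invoking the short-time estimate $|u^\alpha(\hat x,|x-y|)-\phi(\hat x)|\le\nu_R(|x-y|)$ from Lemma \ref{l456} at $\hat x=X(T-|x-y|)$, yielding the modulus $\tilde C^\alpha(R)|x-y|+\nu_R(|x-y|)$. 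Your competitor instead applies a decaying spatial shift $Y=\max\{X+(y-x)\psi,0\}$ on the window $[0,|x-y|]$, so that $Y(T)=X(T)$ and no time lag (hence no DPP and no use of $\nu_R$) is needed in this case; the price is that you must control the perturbed drift term, which requires the local Lipschitz continuity of $b_\alpha$ on $[0,C_2^\alpha(R)+R]$ (available here: $b_\alpha'$ is bounded on compacts of $(x_0,+\infty)$ for $\alpha<1$, and $b_1$ is a polynomial) together with the $L^2$ energy bound $\int_0^T\tfrac12(b_\alpha(X)-\dot X)^2\,dt\le C_1^\alpha(R)+1$ from Lemmas \ref{l28} and \ref{l29}, and the balancing choice $\delta=|x-y|$ then gives a modulus of order $\sqrt{|x-y|}$ rather than the paper's Lipschitz-plus-$\nu_R$ bound. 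Your treatment of the state constraint by clamping at zero is sound, since $\dot Y=0$ a.e.\ on $\{Y=0\}$ and $b_\alpha(0)=0$ make the truncated portions cost-free, and your handling of the degenerate case $T\le|x-y|$ via \eqref{nu} coincides with the paper's. The tracking of the dependence on $\phi$ through $\|\phi\|_{L^\infty([0,R])}$ and $\omega_{\phi,R}$ is also correct, since $L_R^\alpha$ enters only through $C_2^\alpha(R)$.
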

\begin{proof}
We first let $T\leq |x-y|$. Taking $\nu_R$ as in  Lemma \ref{l456}, and by enlarging it if necessary, we may assume without loss of generality $|\phi(x)-\phi(y)|\leq \nu_R(|x-y|)$ so that
\begin{align*}
|u^\alpha(x,T)-u^\alpha(y,T)|&\leq |u^\alpha(x,T)-\phi(x)|+|\phi(x)-\phi(y)|+|\phi(y)-u^\alpha(y,T)|\\
&\leq 3\nu_R(|x-y|).\end{align*}
Assume now $T>|x-y|$ and, by swapping the variable's names $x$ and $y$ if necessary, assume that $u^\alpha(x,T)\leq u^\alpha(y,T)$. Fix $\varepsilon\in(0,1)$ and select $X\in\mathcal C^+(x,T)$ such that
$$u^\alpha(x,T)+\varepsilon\geq \int_0^T \frac{1}{2}X(t)^2+\frac{1}{2}(b_\alpha(X(t))-\dot X(t))^2dt+\phi(X(T)).$$ 
Since $\varepsilon<1$, we know by Lemma \ref{l29} that 
$$|X(t)|\leq C^\alpha_2\quad \text{for } t\in(0,T].$$
On the other hand,  replacing $\nu_R$ by $\nu_{C_2^\alpha(R)}$ if necessary, we have 
$$|u^\alpha(\hat x,|x-y|)-\phi(\hat x)|\leq \nu_R(|x-y|))\quad \text{for } \hat x\in [0,C_2^\alpha(R)].$$
Applying above inequality to $\hat x=X(T-|x-y|)$, we get
\begin{equation}\label{bu}\begin{split}u^\alpha(x,T)+\varepsilon&\geq \int_0^{T-|x-y|} \frac{1}{2}X(t)^2+\frac{1}{2}(b_\alpha(X(t))-\dot X(t))^2dt\\
&\quad +u^\alpha(X(T-|x-y|),|x-y|)\\
&\geq \int_0^{T-|x-y|}  \frac{1}{2}X(t)^2+ \frac{1}{2}(b_\alpha(X(t))-\dot X(t))^2dt\\
&\quad +\phi(X(T-|x-y|))-\nu_R(|x-y|).\end{split}
\end{equation}

Now, let $Y\in \mathcal C^+(y,T)$ be 
$$Y(t):=\begin{cases}
y+\frac{t}{|x-y|}(x-y)& \text{for } 0\leq t\leq |x-y|\\
X(t-|x-y|)& \text{for } |x-y|< t\leq T.
\end{cases}$$
Then, for $t\in[0,|x-y|]$ one has $Y(t)\in[0,R]$ and $\dot Y(t)\in[-1,1]$. Setting 
$$\tilde C^\alpha(R):=\frac{1}{2}\sup\{\xi^2+(b_\alpha(\xi)-\eta)^2\mid (\xi,\eta)\in [0,R]\times [-1,1]\}$$
one gets, also using the last inequality in \eqref{bu},
\begin{align*}
u^\alpha(y,T)-u^\alpha(x,T)\leq &\int_0^T \frac{1}{2}Y(t)^2+\frac{1}{2}(b_\alpha(Y(t))-\dot Y(t))^2dt + \phi(Y(T))+\varepsilon \\
&-\int_0^{T-|x-y|} \frac{1}{2}X(t)^2+\frac{1}{2}(b_\alpha(X(t))-\dot X(t))^2dt\\
& -\phi(X(T-|x-y|))+\nu_R(|x-y|)\\
=&\int_0^{|x-y|} \frac{1}{2}Y(t)^2+\frac{1}{2}(b_\alpha(Y(t))-\dot Y(t))^2dt +\nu_R(|x-y|)+\varepsilon\\
\leq &\tilde C^\alpha(R)|x-y|+\nu_R(|x-y|)+\varepsilon.\end{align*}
In particular,  $u^\alpha(y,T)\geq u^\alpha(x,T)$ implies
\begin{align*}
    |u^\alpha(y,T)-u^\alpha(x,T)|&=u^\alpha(y,T)-u^\alpha(x,T)\\&\leq \gamma_R(|x-y|):=\tilde C^\alpha(R)|x-y|+\nu_R(|x-y|)\,,
\end{align*}
and this concludes the proof of the local uniform continuity of $u^\alpha$ in $x$. Since $\tilde C^\alpha(R)$ is independent from $\phi$, we deduce from Lemma \ref{l456} and from the definition of $C_2^\alpha(R)$ and of $C^\alpha(R)$ that $\nu_R$, hence $\gamma_R$, depends on $\phi$ only via $\omega_{\phi,R}$ and $||\phi||_{L^\infty([0,R])}$.
\end{proof}

\begin{proposition}
[Dynamic Programming Principle]\label{DPP}
Let $\alpha\in[0,1]$. For all $x\geq 0$ and for all $S,T>0$
\begin{equation*}
    \begin{split}
        u^\alpha(x,T+S)=\inf\left\{\int_0^T \frac{1}{2}X^2(t)+\frac{1}{2}(b_\alpha(X(t))-\dot X(t))^2dt+u^\alpha(X(T),S)\right.\\
        \mid X\in \mathcal C^+(x,T)\Bigg\}.
    \end{split}
\end{equation*}
\end{proposition}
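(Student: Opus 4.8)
The plan is to establish the two inequalities separately, exploiting the calculus-of-variations reformulation \eqref{udef+} of the value function together with the fact that the admissibility class $\mathcal{C}^+$ is closed under concatenation and restriction of curves. Throughout, write $\ell(X,\dot X):=\tfrac12 X^2+\tfrac12(b_\alpha(X)-\dot X)^2$ for the running cost, and denote by $W(x,T,S)$ the infimum on the right-hand side of the claimed identity. The essential mechanism is that the cost integral is additive over time and that the terminal penalty $\phi$ is evaluated only at the final instant, so that gluing and cutting trajectories translates directly into adding and splitting costs.

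For the inequality $u^\alpha(x,T+S)\le W(x,T,S)$, I would fix an arbitrary $X\in\mathcal{C}^+(x,T)$ and, given $\varepsilon>0$, invoke \eqref{udef+} for $u^\alpha(X(T),S)$ to select a curve $Y\in\mathcal{C}^+(X(T),S)$ that is $\varepsilon$-optimal for the horizon-$S$ problem started at $X(T)$. The key construction is the concatenation $Z$ equal to $X$ on $[0,T]$ and to $Y(\cdot-T)$ on $[T,T+S]$; since $X(T)=Y(0)$, the curve $Z$ is absolutely continuous, nonnegative, and satisfies $Z(0)=x$, hence $Z\in\mathcal{C}^+(x,T+S)$. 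As $\int_0^{T+S}\ell(Z,\dot Z)\,dt$ splits additively across the junction and the terminal cost of $Z$ equals that of $Y$, feeding $Z$ into \eqref{udef+} bounds $u^\alpha(x,T+S)$ by $\int_0^T\ell(X,\dot X)\,dt+u^\alpha(X(T),S)+\varepsilon$. Taking the infimum over $X\in\mathcal{C}^+(x,T)$ and letting $\varepsilon\to0$ gives the bound.

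For the reverse inequality $W(x,T,S)\le u^\alpha(x,T+S)$, I would proceed symmetrically by restriction: given $\varepsilon>0$, choose an $\varepsilon$-optimal $Z\in\mathcal{C}^+(x,T+S)$ for the horizon-$(T+S)$ problem, and set $X:=Z|_{[0,T]}\in\mathcal{C}^+(x,T)$ and $Y:=Z(\cdot+T)|_{[0,S]}\in\mathcal{C}^+(X(T),S)$. Applying \eqref{udef+} to the tail piece $Y$ yields $u^\alpha(X(T),S)\le\int_T^{T+S}\ell(Z,\dot Z)\,dt+\phi(Z(T+S))$; adding $\int_0^T\ell(Z,\dot Z)\,dt$ to both sides and recognising the resulting left-hand side as an admissible competitor in the definition of $W$ shows that $W(x,T,S)\le\int_0^{T+S}\ell(Z,\dot Z)\,dt+\phi(Z(T+S))\le u^\alpha(x,T+S)+\varepsilon$. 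Letting $\varepsilon\to0$ closes this inequality, and combining the two directions gives the identity.

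The argument is structurally the standard semigroup (dynamic programming) property of value functions, so I do not expect a deep obstacle; the points that genuinely require care are all bookkeeping about the admissible class. Specifically, one must verify that the glued curve $Z$ in the first step is absolutely continuous across the junction $t=T$ (which follows from the matching $X(T)=Y(0)$ and the absolute continuity of each piece) and that both the concatenation and the two restrictions preserve nonnegativity and the prescribed initial value, so that each constructed curve genuinely lies in the relevant $\mathcal{C}^+$ space. I would also note explicitly that, because the control enters only implicitly through $\dot X$ in the reformulation \eqref{udef+}, no measurable-selection theorem is needed and the nonsmoothness of $b_\alpha$ at $\alpha=1$ plays no role here: a single $\varepsilon$-optimal curve suffices at each step, which makes the additivity of the cost integral the only substantive computation.
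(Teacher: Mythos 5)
Your proposal is correct and follows essentially the same route as the paper: both directions are obtained by concatenating an arbitrary (or near-optimal) curve on $[0,T]$ with an $\varepsilon$-optimal tail for the horizon-$S$ problem, and conversely by restricting an $\varepsilon$-optimal curve for the horizon-$(T+S)$ problem to $[0,T]$ and $[T,T+S]$, using the additivity of the running cost and the stability of $\mathcal C^+$ under gluing and restriction. The only cosmetic difference is that you fix an arbitrary $X$ and take the infimum at the end, whereas the paper selects an $\varepsilon/2$-optimal $X$ up front; the two are equivalent.
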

\begin{proof}
Set for brevity
\begin{equation*}
    \begin{split}
       \tilde u^\alpha(x,T,S):=\inf\left\{\int_0^T \frac{1}{2}X^2(t)+\frac{1}{2}(b_\alpha(X(t))-\dot X(t))^2dt+u^\alpha(X(T),S)\right.\\
        \mid X\in \mathcal C^+(x,T)\Bigg\}.
    \end{split}
\end{equation*}
Fix $T,S>0$, $x\geq0$ and $\varepsilon>0$. Let $X\in \mathcal C^+(x,T)$ be such that
$$\int_0^T \frac{1}{2}X^2(t)+\frac{1}{2}(b_\alpha(X(t))-\dot X(t))^2dt+u^\alpha(X(T),S)\leq \tilde u^\alpha(x,T,S)+\frac{\varepsilon}{2}.$$
Let $Y\in\mathcal C^+(X(T),S)$ be such that
$$\int_0^S \frac{1}{2}Y^2(t)+\frac{1}{2}(b_\alpha(Y(t))-\dot Y(t))^2dt+\phi(Y(S))< u^\alpha(X(T),S)+\frac{\varepsilon}{2}.$$
Then the map 
$$Z(t):=\begin{cases}
X(t)& \text{if } t\in[0,T]\\
Y(t-T)& \text{if } t\in[T,T+S]\end{cases}$$
belongs to $\mathcal C^+(x,T+S)$ and it satisies
\begin{align*}
    u^\alpha(x,T+S)&\leq \int_0^{T+S} \frac{1}{2}Z^2(t)+\frac{1}{2}(b_\alpha(Z(t))-\dot Z(t))^2dt+\phi(Z(S+T))\\
    &< \int_0^T \frac{1}{2}X^2(t)+\frac{1}{2}(b_\alpha(X(t))-\dot X(t))^2dt+u^\alpha(X(T),S)+\frac{\varepsilon}{2}\\
    &<\tilde u^\alpha(x,T,S)+\varepsilon.
\end{align*}
By the arbitrariness of $\varepsilon$, one deduces $u^\alpha(x,T+S)\leq \tilde u^\alpha(x,T,S)$.
Now, to prove the inverse inequality, let $\varepsilon>0$ and $X\in \mathcal C^+(x,T+S)$ be such that 
$$\int_0^{T+S} \frac{1}{2}X^2(t)+\frac{1}{2}(b_\alpha(X(t))-\dot X(t))^2dt+\phi(X(S+T))<u^\alpha(x,T+S)+\varepsilon.$$
It follows that
\begin{align*}
    \tilde u^\alpha(x,T,S)\leq &
    \int_0^{T} \frac{1}{2}X^2(t)+\frac{1}{2}(b_\alpha(X(t))-\dot X(t))^2dt+u^\alpha(X(T),S)\\
    &\int_0^{T} \frac{1}{2}X^2(t)+\frac{1}{2}(b_\alpha(X(t))-\dot X(t))^2dt\\
    &+\int_0^{S} \frac{1}{2}X^2(t+T)+\frac{1}{2}(b_\alpha(X(t+T))-\dot X(t+T))^2dt\\
    &+\phi(X(T+S))\\
    <& u^\alpha(x,T+S)+\varepsilon.
\end{align*}
Then $\tilde u^\alpha(x,T,S)\leq u^\alpha(x,T+S)$ and this concludes the proof.
\end{proof}

 We prolong the definition of $u^\alpha$ 
by setting $u^\alpha(x,0)=\phi(x)$ for $x\geq0$. 
\begin{proposition}\label{pc} $u^\alpha\in C([0,+\infty)\times [0,+\infty))$.
\end{proposition}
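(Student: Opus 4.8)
The plan is to upgrade the two one-variable estimates already available — the spatial modulus $\gamma_R$ from Lemma \ref{gammamodulus}, which is uniform in $T$, and the bound $|u^\alpha(x,t)-\phi(x)|\le\nu_R(t)$ from Lemma \ref{l456}, which controls the behaviour as $t\to 0^+$ — into joint continuity. The only missing ingredient is a modulus of continuity in the time variable on $\{T>0\}$, uniform for $x$ in a bounded set; once this is in hand, joint continuity on $[0,R]\times(0,+\infty)$ follows from the triangle inequality $|u^\alpha(x,T)-u^\alpha(y,S)|\le|u^\alpha(x,T)-u^\alpha(y,T)|+|u^\alpha(y,T)-u^\alpha(y,S)|$, and continuity across $\{T=0\}$ is read off directly from Lemma \ref{l456} together with the continuity of $\phi$ and the convention $u^\alpha(x,0)=\phi(x)$.

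To produce the time modulus I would fix $R>0$, $x\in[0,R]$, $T>0$ and $h\in(0,1)$, and apply the Dynamic Programming Principle (Proposition \ref{DPP}) with horizons $h$ and $T$, writing $u^\alpha(x,T+h)=\inf\{\int_0^h(\tfrac12 X^2+\tfrac12(b_\alpha(X)-\dot X)^2)\,dt+u^\alpha(X(h),T)\mid X\in\mathcal C^+(x,h)\}$. For the upper bound I would test this infimum with the constant curve $X\equiv x$, obtaining $u^\alpha(x,T+h)-u^\alpha(x,T)\le\tfrac{h}{2}(x^2+b_\alpha(x)^2)\le\tfrac12 C_3^\alpha(R)\,h$. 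For the lower bound I would pick a near-optimal $X\in\mathcal C^+(x,h)$; combining its defining inequality with the upper bound just obtained and with the uniform bounds $\phi(0)\le u^\alpha(\cdot,T)\le C_1^\alpha(R)$ of Lemma \ref{l28}, the running cost $\int_0^h\tfrac12(b_\alpha(X)-\dot X)^2\,dt$ is bounded by a constant independent of $T$ and $h$. Re-running the $\hat B_\alpha$ argument of Lemma \ref{l29} and the Cauchy--Schwarz estimate of Lemma \ref{l456} then keeps $X$ in a fixed bounded interval and yields $|X(h)-x|\le\tilde\sigma_R(h)$ with $\tilde\sigma_R(h)\to 0$. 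Applying the spatial modulus of Lemma \ref{gammamodulus} gives $u^\alpha(X(h),T)\ge u^\alpha(x,T)-\gamma_{R+1}(\tilde\sigma_R(h))$, and since the running cost is nonnegative we conclude $u^\alpha(x,T+h)-u^\alpha(x,T)\ge-\gamma_{R+1}(\tilde\sigma_R(h))$. Setting $\mu_R(h):=\max\{\tfrac12 C_3^\alpha(R)h,\gamma_{R+1}(\tilde\sigma_R(h))\}$ gives $|u^\alpha(x,T+h)-u^\alpha(x,T)|\le\mu_R(h)$, uniformly in $x\in[0,R]$ and $T>0$, with $\mu_R(h)\to0$ as $h\to0^+$.

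The step I expect to be the main obstacle is the lower bound, i.e.\ transplanting the localization estimates of Lemma \ref{l29} and Lemma \ref{l456} to the Dynamic Programming formulation, where the terminal cost is the value function $u^\alpha(\cdot,T)$ rather than $\phi$. This works precisely because $u^\alpha(\cdot,T)$ inherits, uniformly in $T$, the three properties that drove those lemmas: it is bounded below by $\phi(0)$, bounded above by $C_1^\alpha(R)$ on $[0,R]$, and admits the modulus $\gamma_R$. The remarks recording that $\nu_R$ and $\gamma_R$ depend on $\phi$ only through $\|\phi\|_{L^\infty([0,R])}$ and $\omega_{\phi,R}$ are exactly what make this substitution legitimate. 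Granting the time modulus $\mu_R$, the triangle inequality of the first paragraph gives local uniform continuity of $u^\alpha$ on $[0,R]\times(0,+\infty)$ for every $R$, and the $\{T=0\}$ estimate closes the argument on the whole of $[0,+\infty)\times[0,+\infty)$.
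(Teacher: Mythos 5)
Your proposal is correct and follows essentially the same route as the paper: both use the Dynamic Programming Principle to view $u^\alpha(\cdot,T+h)$ as the value function with initial datum $u^\alpha(\cdot,T)$, and both hinge on the observation that the estimates of Lemmas \ref{l28}, \ref{l29} and \ref{l456} depend on the initial datum only through its sup norm and modulus of continuity, which Lemma \ref{l28} and Lemma \ref{gammamodulus} control uniformly in $T$. The only difference is presentational: the paper invokes Lemma \ref{l456} as a black box via that dependence remark, whereas you unpack its internal upper/lower-bound argument explicitly.
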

\begin{proof}
Let $R>0$. By Lemma \ref{gammamodulus} there exists a modulus of continuity $\gamma_R$ such that for every $S>0$, $x,y\in [0,R]$ one has 
$|u^\alpha(x,S)-u^\alpha(y,S)|\leq \gamma_R(|x-y|).$ In other words, setting $\bar \phi(\cdot):=u^\alpha(\cdot,S)$ we have that $\bar \phi$ is a  locally uniformly continuous map, and by Lemma \ref{l28} it attains its global minimum at $x=0$.
By Proposition \ref{DPP}, for every $T>0$
\begin{equation*}
    \begin{split}
        u^\alpha(x,T+S)=\inf\left\{\int_0^T \frac{1}{2}X(t)^2+\frac{1}{2}(b_\alpha(X(t))-\dot X(t))^2dt+\bar \phi(X(T))\right.\\\mid X\in \mathcal C^+(x,T)\Bigg\}.
    \end{split}
\end{equation*}
Applying Lemma \ref{l456} to $\bar u^\alpha(x,T):=u^\alpha(x,T+S)$ and to $\bar \phi$, we deduce that  there exists a modulus of continuity $\bar \nu_R$ satisfying
$$|u^\alpha(x,S+T)-u^\alpha(x,S)|=|\bar u^\alpha(x,T)-\bar \phi(x)|\leq \bar \nu_R(T).$$
Note that $\bar \nu_R$ depends on $\bar \phi$ (hence on $u^\alpha(\cdot,S)$) only  via $||\bar \phi||_{L^\infty([0,R])}$ and via the modulus of continuity of $\bar \phi$. In particular, by Lemma \ref{l28}, we have 
$||\bar \phi||_{L^\infty([0,R])}\leq C_1^\alpha(R)$.
On the other hand, the modulus of continuity of $\bar \phi$ is simply $\gamma_R$, i.e. the modulus of continuity of $u^\alpha(\cdot, S)$, which is independent from $S$ by Lemma \ref{gammamodulus}.   We then conclude that 
$$|u^\alpha(x,s)-u^\alpha(x,t)|\leq \bar \nu_R(|s-t|)\qquad \text{for }x\in B(0,R),\, t,s\in[0,+\infty)$$ 
and, consequently, we deduce the continuity of $u^\alpha(x,\cdot)$ for all $x\in [0,R]$. Since $R$ is arbitrary, the proof is complete. 
\end{proof}

\section{Asymptotic solutions}
In this section, we consider the infinite horizon problem of minimizing with respect to $\xi$
\begin{equation}\label{pbinf}\int_0^{+\infty} \frac{I^2(t)}{2}+\frac{\xi^2(t)}{2}dt
\qquad \text{subject to \eqref{controlya} and } I(t)\geq 0, t>0\end{equation}
and the associated value function 
$$v^\alpha(x):=\inf_{\xi\in \mathcal A_\infty}\int_0^{+\infty} \frac{I^2(t;\xi,x)}{2}+\frac{\xi^2(t)}{2}dt\,,$$
where $\mathcal A_\infty\subset L^1((0,+\infty))$ is the set of admissible controls, i.e.,  $\xi\in \mathcal A_\infty$ if $I(t;\xi,I_0)\geq 0$ for all $t>0$.
Our aim is to prove that, for a suitable notion of convergence, the value function $u^\alpha(x,T)$ of the finite horizon problem in Section \ref{s3}   tends to $v^\alpha(x)$ as $T\to+\infty$.
We remark the lack of a discount factor in the optimization problem \eqref{pbinf}, so our first goal is to prove that $v^\alpha(x)$ is finite for all $x\geq 0$. To this end, we introduce a representation formula for $v^\alpha$, based on the functions $d^\alpha:[0,+\infty)\times [0,+\infty)\to [0,+\infty)$
and $\psi^\alpha:[0,+\infty)\to [0,+\infty)$ respectively defined by 
\begin{equation*}
    \begin{split}
        d^\alpha(x,y):=\inf\left\{\int_0^T \frac{1}{2}X(t)^2+\frac{1}{2}(b_\alpha(X(t))-\dot X(t))^2dt\right.\\\mid T>0,\, X\in \mathcal C^+(x,y,T)\Bigg\}
    \end{split}
\end{equation*}
and
\begin{equation*}
    \begin{split}
  \psi^\alpha(x):=\inf\left\{\int_0^T \frac{1}{2}X(t)^2+\frac{1}{2}(b_\alpha(X(t))-\dot X(t))^2dt+\phi(X(T))\right.\\\mid T>0,\, X\in \mathcal C^+(x,T)\Bigg\}\,,      
    \end{split}
\end{equation*}
where, for $x,y\geq0$, $\mathcal C^+(x,y,T)$ denotes the space of non-negative, absolutely continuous functions  $X: [0,T]\to[0,+\infty)$  satisfying  $(X(0),X(T))=(x,y)$.
\begin{theorem}\label{thmva}
For all $\alpha\in[0,1]$ and $x\geq 0$
\begin{equation}\label{vchar}
v^\alpha(x)=d^\alpha(x,0)+\psi^\alpha(0).
\end{equation}
In particular 
\begin{equation}\label{vcharbond}0\leq v^\alpha(x)\leq(x+||b_\alpha||_{L^\infty([0,x])})^2+\psi^\alpha(0)\end{equation}
for all $x\geq 0$.
Moreover $v^\alpha$ is a viscosity solution of 
\begin{equation}\label{hjstat}\begin{cases}
-b_\alpha(x)Dv+\frac{1}{2}Dv^2-\frac{1}{2}x^2=0, \quad x\in\RR_+\\
v(0)=\phi(0).
\end{cases}
\end{equation}
\end{theorem}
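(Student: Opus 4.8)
The plan is to prove the three assertions of Theorem~\ref{thmva} in the stated order: the representation \eqref{vchar} first, then the bound \eqref{vcharbond} and the boundary value as immediate corollaries, and finally the viscosity property through a stationary Dynamic Programming Principle, adapting the scheme of Theorem~\ref{thmexistence} and \cite[Theorem 10]{FIL06}. The decisive structural fact is that the disease-free state $0$ is a zero-cost rest point of the reformulated functional: since $b_\alpha(0)=0$, the constant curve $X\equiv0$ has integrand $\frac12 X^2+\frac12(b_\alpha(X)-\dot X)^2\equiv0$, so a trajectory may sit at the origin on any time interval at no cost, and this is what renders the infinite-horizon value finite.

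To prove \eqref{vchar} I would argue by two inequalities. For $\le$, given $\varepsilon>0$ I concatenate a near-optimal curve for $d^\alpha(x,0)$ on an interval $[0,T_1]$, which lands at $0$, with the constant curve $X\equiv0$ on $[T_1,+\infty)$; since holding the origin is free and $\psi^\alpha(0)=\phi(0)$ by minimality of $\phi$ at $0$, the total cost is at most $d^\alpha(x,0)+\psi^\alpha(0)+\varepsilon$, whence $v^\alpha(x)\le d^\alpha(x,0)+\psi^\alpha(0)$. For $\ge$, I take any admissible $X\in\mathcal C^+(x,+\infty)$ of finite cost; finiteness of $\int_0^{+\infty}\frac12 X(t)^2\,dt$ forces a sequence $t_n\to+\infty$ with $X(t_n)\to0$. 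Restricting $X$ to $[0,t_n]$ exhibits a competitor for $d^\alpha(x,X(t_n))$, while the tail, together with the terminal contribution at the equilibrium, is bounded below by $\psi^\alpha(0)$; letting $n\to+\infty$ and using the continuity of $y\mapsto d^\alpha(x,y)$ and of $\phi$ at $y=0$ yields $v^\alpha(x)\ge d^\alpha(x,0)+\psi^\alpha(0)$.

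Once \eqref{vchar} is available, the bound \eqref{vcharbond} follows at once: nonnegativity of the integrand gives $v^\alpha\ge0$, while the explicit curve $X(t)=x(1-t)$ on $[0,1]$ prolonged by $X\equiv0$ gives, exactly as in the upper estimate of Lemma~\ref{l28}, the bound $d^\alpha(x,0)\le(x+\|b_\alpha\|_{L^\infty([0,x])})^2$, and adding $\psi^\alpha(0)$ yields \eqref{vcharbond}. Evaluating \eqref{vchar} at $x=0$, where $d^\alpha(0,0)=0$, gives $v^\alpha(0)=\psi^\alpha(0)=\phi(0)$, which is precisely the boundary condition in \eqref{hjstat}. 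For the viscosity property I would first record the stationary Dynamic Programming Principle $v^\alpha(x)=\inf\{\int_0^T\frac12 X^2+\frac12(b_\alpha(X)-\dot X)^2\,dt+v^\alpha(X(T))\mid X\in\mathcal C^+(x,T)\}$, proved by the same splitting argument as Proposition~\ref{DPP}; continuity of $v^\alpha$ then follows from \eqref{vchar} and the continuity of $d^\alpha(\cdot,0)$, obtained by reproducing the a~priori trajectory estimates and moduli of Lemmas~\ref{l29}--\ref{gammamodulus}. With DPP and continuity in hand, the sub- and super-solution inequalities for the Hamiltonian $H(x,p)=-b_\alpha(x)p+\frac12 p^2-\frac12 x^2$ are verified by the standard test-function and near-optimal-control computation, the stationary analogue of \cite[Theorem 10]{FIL06}.

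The hard part will be the lower bound in \eqref{vchar}: one must show that a finite-cost infinite-horizon trajectory is genuinely driven toward the equilibrium $0$ and that no cost is lost at infinity. This requires the a~priori boundedness of near-optimal trajectories (the analogue of Lemma~\ref{l29}, ensuring $X(t_n)\to0$ along a subsequence) together with the continuity of $d^\alpha(x,\cdot)$ at $0$, and the correct identification of the cost accrued near the equilibrium with $\psi^\alpha(0)$. A secondary difficulty, already present in Theorem~\ref{thmexistence}, is that $b_\alpha$ is not Lipschitz at $\alpha=1$ and the control set is unbounded, so all trajectory bounds must be extracted from the coercive quadratic functional itself rather than from Gronwall estimates on the dynamics.
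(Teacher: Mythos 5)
Your overall architecture coincides with the paper's: the upper bound in \eqref{vchar} by concatenating a near-optimal curve for $d^\alpha(x,0)$ with the zero-cost rest at the origin, the bound \eqref{vcharbond} via the explicit curve $X(t)=x(1-t)$, and the viscosity property via a dynamic-programming inequality plus the standard test-function computation. The only organizational difference is that the paper never states a stationary DPP for $v^\alpha$: it derives the inequality $v^\alpha(\hat x)\le d^\alpha(\hat x,y)+v^\alpha(y)$ from the triangle inequality for $d^\alpha$ (Lemma \ref{l1}), and for the supersolution half works directly with near-optimal curves for $d^\alpha(\hat x,0)$, controlling their short-time excursion by the estimates of Lemma \ref{l456}. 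These routes are equivalent in substance.

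There is, however, one step in your sketch that does not go through as written: the lower bound $v^\alpha(x)\ge d^\alpha(x,0)+\psi^\alpha(0)$. The infinite-horizon functional defining $v^\alpha$ contains no terminal cost, so there is no ``terminal contribution at the equilibrium'' to bound below by $\psi^\alpha(0)$; your argument (restrict a finite-cost trajectory to $[0,t_n]$ with $X(t_n)\to 0$, discard the nonnegative tail, pass to the limit using continuity of $d^\alpha(x,\cdot)$) yields only $v^\alpha(x)\ge d^\alpha(x,0)$. Since $\psi^\alpha(0)=\phi(0)$, which is only assumed nonnegative, the missing summand need not vanish. To be fair, the paper dismisses this direction with ``the inverse inequality can be proved similarly,'' and its own proof of the other direction in fact establishes $v^\alpha(x)\le d^\alpha(x,0)$, so the identity \eqref{vchar} is consistent only under the implicit normalization $\phi(0)=0$ (or a definition of $v^\alpha$ that charges $\phi$ at infinity). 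You should either make that normalization explicit or adjust the claimed representation; as it stands, the mechanism you propose for the lower bound is the one genuinely unsupported step.
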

 The proof is postponed in Section \ref{s41} below. We finally state our main result, whose proof is showed in Section \ref{s42}. 
\begin{theorem}\label{thm1}
For all $\alpha\in[0,1]$, the value function $u^\alpha$ defined in \eqref{value} satisfies for all $R>0$ 
\begin{equation}\label{limit}
\lim_{T\to+\infty}\max_{x\in[ 0,R]}|u^\alpha(x,T)-v^\alpha(x)|=0.
\end{equation}
\end{theorem}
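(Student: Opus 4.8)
The plan is to establish pointwise convergence $u^\alpha(x,T)\to v^\alpha(x)$ for each fixed $x\ge 0$ and then to upgrade it to the uniform statement \eqref{limit} by compactness. Indeed, Lemma \ref{gammamodulus} furnishes a modulus $\gamma_R$ for $u^\alpha(\cdot,T)$ on $[0,R]$ that is \emph{independent of $T$}, so the family $\{u^\alpha(\cdot,T)\}_{T>0}$ is equicontinuous on $[0,R]$; since $v^\alpha$ is continuous (it solves \eqref{hjstat} in the viscosity sense and is in any case bounded by \eqref{vcharbond}), the standard fact that pointwise convergence of an equicontinuous family on a compact set is automatically uniform reduces \eqref{limit} to proving $u^\alpha(x,T)\to v^\alpha(x)$ for each $x$. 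Throughout I would use the representation $v^\alpha(x)=d^\alpha(x,0)+\psi^\alpha(0)$ from Theorem \ref{thmva} together with the variational form \eqref{udef+} of $u^\alpha$, and split the task into the two one-sided bounds.

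For the upper bound $\limsup_{T\to+\infty}u^\alpha(x,T)\le v^\alpha(x)$ I would argue by explicit concatenation. Fix $\varepsilon>0$ and choose, from the definitions of $d^\alpha$ and $\psi^\alpha$, a time $T_1$ and a curve steering $x$ to $0$ with cost at most $d^\alpha(x,0)+\varepsilon$, together with a time $T_2$ and a curve issuing from $0$ with running-plus-terminal cost at most $\psi^\alpha(0)+\varepsilon$. Since $b_\alpha(0)=0$, the constant curve $X\equiv 0$ is a zero-cost equilibrium of the Lagrangian $\tfrac12 X^2+\tfrac12(b_\alpha(X)-\dot X)^2$; hence for every $T\ge T_1+T_2$ I glue the first curve on $[0,T_1]$, the constant $0$ on $[T_1,T-T_2]$, and the second curve on $[T-T_2,T]$, producing an admissible $X\in\mathcal C^+(x,T)$ whose total cost does not exceed $d^\alpha(x,0)+\psi^\alpha(0)+2\varepsilon=v^\alpha(x)+2\varepsilon$. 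Letting $T\to+\infty$ and then $\varepsilon\to 0$ gives the claim.

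The lower bound $\liminf_{T\to+\infty}u^\alpha(x,T)\ge v^\alpha(x)$ is the core of the argument and rests on forcing near-optimal trajectories to visit a neighbourhood of the equilibrium $0$ for large horizons. By Lemma \ref{l28} one has $u^\alpha(x,T)\le C_1^\alpha(R)$ uniformly in $T$ for $x\in[0,R]$, so any $\varepsilon$-optimal $X\in\mathcal C^+(x,T)$ (with $\varepsilon\in(0,1)$) for \eqref{udef+} has running cost at most $C_1^\alpha(R)+1$. Since $\tfrac12 X(t)^2\ge\tfrac12\delta^2$ whenever $X(t)\ge\delta$, the set $\{t\in[0,T]:X(t)\ge\delta\}$ has measure at most $2(C_1^\alpha(R)+1)/\delta^2$, whence, once $T$ exceeds this threshold, there is an interior time $t^\ast$ with $X(t^\ast)<\delta$. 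Splitting the running integral at $t^\ast$ and using the triangle inequality $d^\alpha(x,z)\le d^\alpha(x,y)+d^\alpha(y,z)$ (obtained by concatenation), I bound the running cost below by $d^\alpha(x,X(t^\ast))+d^\alpha(X(t^\ast),X(T))\ge d^\alpha(x,0)+d^\alpha(0,X(T))-\omega(\delta)$, where $\omega(\delta):=\sup_{0\le y\le\delta}\bigl(d^\alpha(y,0)+d^\alpha(0,y)\bigr)\to 0$ as $\delta\to 0$ by local controllability at $0$ (here $b_\alpha(0)=0$ and $b_\alpha$ is locally Lipschitz near $0$). Adding the terminal cost $\phi(X(T))$ and invoking $d^\alpha(0,y)+\phi(y)\ge\psi^\alpha(0)$ for all $y\ge 0$, I obtain $u^\alpha(x,T)+\varepsilon\ge d^\alpha(x,0)+\psi^\alpha(0)-\omega(\delta)=v^\alpha(x)-\omega(\delta)$; sending $T\to+\infty$ with $\delta$ fixed, then $\delta\to 0$ and $\varepsilon\to 0$, yields the bound.

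I expect the lower bound to be the main obstacle. Everything hinges on the $T$-uniform a priori estimate of Lemma \ref{l28}, which powers the occupation-time argument that pins a near-optimal trajectory near the unique equilibrium $0$; the accompanying facts—that $d^\alpha$ satisfies the triangle inequality and that $d^\alpha(\cdot,0)$ and $d^\alpha(0,\cdot)$ vanish continuously at $0$—are the technical heart, and are exactly what makes the limit equal to $d^\alpha(x,0)+\psi^\alpha(0)$ rather than the strictly smaller quantity $\inf_{y\ge 0}\bigl(d^\alpha(x,y)+\phi(y)\bigr)$ that the crude one-step estimate $u^\alpha(x,T)\ge d^\alpha(x,X(T))+\phi(X(T))$ would otherwise produce.
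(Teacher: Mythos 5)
Your argument is correct, and although both one-sided bounds rest on the same core ideas as the paper's proof (concatenation through the equilibrium $0$ for the upper bound; an occupation-time argument forcing $\varepsilon$-optimal trajectories to pass near $0$ for the lower bound), your treatment of uniformity in $x$ and your packaging of the lower bound are genuinely different. The paper obtains a horizon $T_{R,\varepsilon}$ valid simultaneously for all $x\in[0,R]$ by proving Lemmas \ref{l25} and \ref{l26}, whose compactness-covering constructions exist precisely to make the concatenation step uniform; you instead prove only pointwise convergence (with $x$-dependent times $T_1,T_2$) and upgrade it to \eqref{limit} using the $T$-independent modulus $\gamma_R$ of Lemma \ref{gammamodulus} together with the continuity of $v^\alpha$ from Lemma \ref{l1}, a standard equicontinuity argument that renders Lemmas \ref{l25}--\ref{l26} unnecessary (and in fact your lower bound is already uniform on $[0,R]$, since the threshold $T>2(C_1^\alpha(R)+1)/\delta^2$ depends only on $R$ and $\delta$). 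For the lower bound, the paper locates the time $\tau$ with $X(\tau)\le\delta$ via the potential $\hat B_\alpha$ and the constant $\bar\gamma=\min\{\frac12 z^2+\frac12 b_\alpha^2(z)\mid |z|\ge\delta\}$, and then performs explicit surgery on the trajectory, inserting a two-legged detour through $0$ of duration $2\delta$ whose extra cost is at most $2C_1^\alpha(R)\delta$; you reach the same conclusion more economically by discarding the $b_\alpha^2$ term so that $\frac12 X^2\ge\frac12\delta^2$ alone bounds the occupation time, and by replacing the surgery with the triangle inequality for $d^\alpha$ combined with the local Lipschitz bound \eqref{dcont}, which gives $\omega(\delta)\le 2\tilde C^\alpha_1\delta\to0$ --- this is exactly what the paper's detour construction proves in disguise. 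Your closing inequality $d^\alpha(0,y)+\phi(y)\ge\psi^\alpha(0)$ is the correct abstract substitute for the paper's final estimate on the tail of the modified trajectory. In short, your route trades the paper's explicit constructions for the metric-like structure of $d^\alpha$ already established in Lemma \ref{l1}, at no loss of rigor.
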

\begin{remark}\label{rmk1}
An explicit viscosity solution of \eqref{hjstat} is provided by 
$$\bar v^\alpha(x):=\phi(0)+\int_0^x b_\alpha(s)+\sqrt{b_\alpha^2(s)+s^2}ds\,.$$
 In particular, $\bar v_\alpha$ is smooth, positive, increasing and it satisfies $\bar v(x)\to+\infty$ as $x\to +\infty$.
 Moreover, for $\alpha=1$,  using the Wolfram Mathematica software, we obtain the following closed form for $\bar v^1$:
  \begin{align*}
  \bar v^1(x)=&\phi(0)+C_0(\beta,\gamma,N)\\
  &-\frac{\beta  x^3}{3 N}+\frac{1}{2} x^2 (\beta -\gamma )+\frac{N^2}{\beta^2}\left(\frac{1}{3}
    \left(y^2(x)+1\right)^{3/2}\right.\\&\left.-\frac{1}{2} (\beta -\gamma)y(x)
   \sqrt{y^2(x)+1} -\frac{1}{2} (\beta -\gamma )
   \sinh ^{-1}\left(y(x)\right)\right)\,,
   \end{align*}
   where $y(x):=\beta-\gamma-\frac{\beta}{N}x$ and
   \begin{equation*}
       \begin{split}
       C_0(\beta,\gamma,N)=\frac{N^2}{\beta^2}\left(\frac{1}{2}((\beta -\gamma )^2+1)^{1/2} (\beta -\gamma )^2-\frac{1}{3} \left((\beta -\gamma )^2+1\right)^{3/2}\right.\\\left.+\frac{1}{2} (\beta -\gamma ) \sinh ^{-1}(\beta -\gamma )\right).
   \end{split}
   \end{equation*}
\end{remark}

\subsection{Preliminary results and proof of Theorem \ref{thmva}}\label{s41}
Our first result proves the first part of the claim of Theorem \ref{thmva}, that is  \eqref{vchar} and \eqref{vcharbond} 
\begin{lemma}
For all $\alpha\in[0,1]$ and $x\geq 0$
\begin{equation*}
v^\alpha(x)=d^\alpha(x,0)+\psi^\alpha(0).
\end{equation*}
In particular 
$$0\leq v^\alpha(x)\leq(x+||b_\alpha||_{L^\infty([0,x])})^2+\psi^\alpha(0)$$
for all $x\geq 0$.
\end{lemma}
\begin{proof}

Fix $\varepsilon>0$ and let $T>0$ and $X\in C^+(x,0,T)$ be such that
$$\int_0^T \frac{1}{2}X(t)^2+\frac{1}{2}(b_\alpha(X(t))-\dot X(t))^2dt<d_\alpha(x,0)+\varepsilon.$$
Then prolonging the definition of $X$ to $(T,+\infty)$ by setting $X(t)=0$ for all $t>T$, and letting $\xi\in L^1((0,+\infty))$ be such that $\xi(t)=\dot X(t)-b_\alpha(X(t))$ a.e., one deduces that $X(t)=I(t;\xi,x)$ for some admissible trajectory, and consequently
$v^\alpha(x)< d^\alpha(x,0)+\varepsilon.$
By the arbitrariness of $\varepsilon$ one can deduce $v^\alpha(x)\leq d^\alpha(x,0)$ and, since $\psi^\alpha(0)$ is nonnegative, also $v^\alpha(x)\leq d^\alpha(x,0)+\psi^\alpha(0)$. The inverse inequality can be proved similarly. 

In view of \eqref{vchar}, setting $X\in C^+(x,0,1)$ given by $X(t):=x(1-t)$ one has $X(t)\in[0,x]$ for all $t\in[0,1]$ and
\begin{align*}
    v^\alpha(x)&=d^\alpha(x,0)+\phi^\alpha(0)\\
    &\leq \int_0^1 \frac{1}{2}(x(1-t))^2+\frac{1}{2}(b_\alpha(x(1-t))-x)^2dt\\
    &\leq (x+||b_\alpha||_{L^\infty([0,x])})^2+\phi^\alpha(0).
\end{align*}
\end{proof}

\begin{lemma}\label{l1}
For all $\alpha\in[0,1]$ the function $d^\alpha$ is a locally Lipschitz continuous  function in $[0,+\infty)\times [0,+\infty)$. In particular, $v^\alpha$ is locally Lipschitz continuous in $[0,+\infty)$.

Moreover, the following properties hold:
$$d^\alpha(x,y)\geq0;\quad d^\alpha(x,y)\leq d^\alpha(x,z)+d^\alpha(z,y);\quad d(x,x)=0\,\, \forall x,y,z\in [0,+\infty).$$

\end{lemma}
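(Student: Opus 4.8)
The plan is to establish the three algebraic properties first, since the local Lipschitz estimate will be deduced from the triangle inequality. Non-negativity of $d^\alpha$ is immediate: the integrand $\tfrac12 X^2+\tfrac12(b_\alpha(X)-\dot X)^2$ is a sum of squares, so every admissible cost is $\ge0$ and hence so is the infimum. For $d^\alpha(x,x)=0$ I would test the infimum with the constant curve $X\equiv x$ on $[0,T]$, which belongs to $\mathcal C^+(x,x,T)$ and has cost $\tfrac{T}{2}(x^2+b_\alpha(x)^2)$; letting $T\to0^+$ forces $d^\alpha(x,x)\le0$, and combined with non-negativity this gives equality. The triangle inequality $d^\alpha(x,y)\le d^\alpha(x,z)+d^\alpha(z,y)$ follows by concatenation, exactly as in the proof of Proposition \ref{DPP}: given $\varepsilon$-optimal curves $X_1\in\mathcal C^+(x,z,T_1)$ and $X_2\in\mathcal C^+(z,y,T_2)$, the glued curve $Z$ equal to $X_1$ on $[0,T_1]$ and to $X_2(\cdot-T_1)$ on $[T_1,T_1+T_2]$ lies in $\mathcal C^+(x,y,T_1+T_2)$ with cost equal to the sum of the two costs; taking the infimum and letting $\varepsilon\to0$ concludes.

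The core of the lemma is the local Lipschitz continuity of $d^\alpha$, and its engine is a one-sided linear bound. Fix $R>0$ and $a,b\in[0,R]$ with $a\ne b$. I would test $d^\alpha(a,b)$ with the affine curve $X(t):=a+\frac{t}{|a-b|}(b-a)$ on the horizon $T=|a-b|$, which belongs to $\mathcal C^+(a,b,T)$ since it stays in $[\min(a,b),\max(a,b)]\subseteq[0,R]$, and satisfies $|\dot X|\equiv1$. Bounding $X^2\le R^2$ and $(b_\alpha(X)-\dot X)^2\le(||b_\alpha||_{L^\infty([0,R])}+1)^2$ over an interval of length $|a-b|$ yields
\begin{equation*}
d^\alpha(a,b)\le C_R\,|a-b|,\qquad C_R:=\tfrac12\big(R^2+(||b_\alpha||_{L^\infty([0,R])}+1)^2\big).
\end{equation*}
The point is that shrinking the horizon to $T=|a-b|$ makes both the running cost and the velocity penalty contribute only $O(|a-b|)$; the estimate holds in either orientation, since only $|\dot X|$ enters the bound.

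Finally I would combine the two ingredients. For $(x,y),(x',y')\in[0,R]^2$, chaining the triangle inequality gives $d^\alpha(x,y)-d^\alpha(x',y')\le d^\alpha(x,x')+d^\alpha(y',y)$ and symmetrically $d^\alpha(x',y')-d^\alpha(x,y)\le d^\alpha(x',x)+d^\alpha(y,y')$; the linear bound then produces
$$|d^\alpha(x,y)-d^\alpha(x',y')|\le C_R\big(|x-x'|+|y-y'|\big),$$
which is the claimed local Lipschitz continuity. For $v^\alpha$, I would invoke the representation \eqref{vchar}, $v^\alpha(x)=d^\alpha(x,0)+\psi^\alpha(0)$: since $\psi^\alpha(0)$ is a constant and $d^\alpha(\cdot,0)$ is locally Lipschitz, so is $v^\alpha$, with $|v^\alpha(x)-v^\alpha(x')|\le C_R|x-x'|$ on $[0,R]$. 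I do not expect a genuine obstacle; the only point requiring care is the choice of horizon $T=|a-b|$ in the linear bound (too small a horizon inflates the velocity penalty, too large inflates the running cost), together with the observation that the affine connector never leaves $[0,R]$, so it is an admissible non-negative trajectory.
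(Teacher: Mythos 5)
Your proposal is correct and follows essentially the same route as the paper: an affine connector run over the horizon $T=|x-y|$ to get the one-sided linear bound $d^\alpha(x,y)\le C_R|x-y|$, the triangle inequality by concatenation of near-optimal curves, and the chaining of the two to obtain local Lipschitz continuity of $d^\alpha$ and hence of $v^\alpha$ via the representation $v^\alpha(x)=d^\alpha(x,0)+\psi^\alpha(0)$. The only cosmetic difference is the order of the steps and the exact constant (the paper uses $\tilde C^\alpha_R=\max\{\tfrac12\xi^2+\tfrac12(b_\alpha(\xi)-\eta)^2\mid(\xi,\eta)\in[0,R]\times[-1,1]\}$, which your $C_R$ dominates).
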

\begin{proof}
\begin{enumerate}
\item[1.] Fix $x,y\in[0,+\infty)$ and $\varepsilon>0$. Choose $T>0$ and $X\in \mathcal C^+(x,y,T)$ so that 
$$d^\alpha(x,y)+\varepsilon>\int_0^T\frac{1}{2} X(t)^2+\frac{1}{2}(b_\alpha(X(t))-\dot X(t))^2dt.$$
Since the integrand in above expression is non-negative, we deduce by the arbitrariness of $\varepsilon$ that $d^\alpha(x,y)\geq 0$.
\item[2.] Fix $R>0$ and $x,y\in [0,R]$. We set as in the proof of Lemma \ref{gammamodulus}
\begin{align*}
\tilde C^\alpha_R:=&\max\left\{ \frac{1}{2}x^2+\frac{1}{2}(b_\alpha(x)-\xi)^2\mid (x,\xi)\in[0,R]\times [-1,1]\right\}\,.
\end{align*}
We first assume that $x\not=y$, we define the curve $X\in \mathcal C^+(x,y,|x-y|)$ by 
$$X(t)=x-\frac{ t}{|x-y|}(x-y)\qquad \text{for } 0\leq t\leq |x-y|,$$
and we observe that $X(t)\in[0, R]$ and $\dot X(t)\in [-1,1]$ for all $t\in[0,|x-y|]$. Hence
\begin{align*}
d^\alpha(x,y)&\leq \int_{0}^{|x-y|}\frac{1}{2}X(t)^2+\frac{1}{2}(b_\alpha(X(t))-\dot X(t))^2 dt\\
&\leq \tilde C^\alpha_R |x-y|.
\end{align*}
Now, we consider the case $x=y$. Fix any $T>0$ and set $X(t)=x$ for $t\in[0,T]$. Then we have
\begin{align*}
d^\alpha(x,y)&\leq \int_{0}^{T}\frac{1}{2}x^2+\frac{1}{2}b^2_\alpha(x) dt\leq \tilde C^\alpha_R T.
\end{align*}
Therefore 
\begin{equation}\label{dcont}
d^\alpha(x,y)\leq \tilde C^\alpha_R|x-y|\quad \forall x,y\in[ 0,R].
\end{equation}
In particular $d(0,0)=0$.
\item[3.] Let $x,y,z\in[0,+\infty)$. Let $T,S>0$, $X\in \mathcal C^+(x,z,T)$ and $Y\in \mathcal C^+(z,y,S)$. Define $Z\in \mathcal C^+(x,y,T+S)$ by
$$
Z(t)=\begin{cases}
X(t) \quad &\text{ for } 0\leq t\leq T,\\
Y(t-T)\quad &\text{ for } T< t\leq T+S.
\end{cases}
$$
We have
\begin{equation*}\begin{split}
d^\alpha(x,y)\leq& \int_{0}^{T+S}\left(\frac{1}{2}Z(t)^2+\frac{1}{2}(b_\alpha(Z(t))-\dot Z(t))^2\right) dt\\
=&\int_{0}^{T}\frac{1}{2}X(t)^2+\frac{1}{2}(b_\alpha(X(t))-\dot X(t))^2 dt\\
&+\int_{0}^{S}\frac{1}{2}Y(t)^2+\frac{1}{2}(b_\alpha(Y(t))-\dot Y(t))^2dt\,.
\end{split}\end{equation*}
By the arbitrariness of $X$ and $Y$, we deduce
\begin{equation}\label{triangular}
d^\alpha(x,y)\leq d^\alpha(x,z)+d^\alpha(z,y).\end{equation}
\item[4.] Finally, using \eqref{dcont} and \eqref{triangular}, we deduce that for all $R>0$ and for all $x,y,\xi,\eta\in[0,R]$
$$|d^\alpha(x,y)-d^\alpha(\xi,\eta)|\leq \tilde C^\alpha_R(|x-\xi|+|y-\eta|)\leq 2C^\alpha_R||(x,y)-(\xi,\eta)||.$$
The local Lipschitz continuity of $v^\alpha$ readily follows by the identity $v^\alpha(x)=d^\alpha(x,0)+\psi^\alpha(0)$.
\end{enumerate}
\end{proof}

\subsubsection{Proof of Theorem \ref{thmva}} It is left to prove that for all $\alpha\in[0,1]$, 
$v^\alpha$ is a viscosity solution of \eqref{hjstat}. 
\begin{proof}
Let $\varphi\in C^1((0,+\infty))$ and $\hat x\in (0,+\infty)$.  We first assume that $v^\alpha-\varphi$ attains a local maximum at $\hat x$. We then may assume, without loss of generality, that $v^\alpha(\hat x)-\varphi(\hat x)= 0$, so that $v^\alpha\leq \varphi$ in $B_\delta(\hat x)$ for some $\delta>0$. 
Setting $X(t)=\hat x+ (b_\alpha(\hat x)-D\varphi(\hat x))t$, by Lemma \ref{l1} and by the definition of $d^\alpha$, it follows that, for all $\varepsilon>0$ such that $X(\varepsilon)\in B_\delta(\hat x)$, 
\begin{align*}\varphi (\hat x)&=v^\alpha(\hat x)=d^\alpha(\hat x,0)+\psi^\alpha(0)\\
&\leq d^\alpha(\hat x,X(\varepsilon))+d^\alpha(X(\varepsilon),0)+\psi^\alpha(0)\\
&= d^\alpha(\hat x,X(\varepsilon))+v^\alpha(X(\varepsilon))\\
&\leq d^\alpha(\hat x,X(\varepsilon))+\varphi(X(\varepsilon)) \\
&\leq \frac{1}{2}\int_0^\varepsilon (X(t))^2+(b_\alpha(X(t))-\dot X(t))^2 dt+ \varphi(X(\varepsilon)) \,.
\end{align*}
Since $X(\varepsilon)=\hat x+(b_\alpha(\hat x)-D\varphi(\hat x))\varepsilon$, then for all sufficiently small $\varepsilon>0$,
$$
\frac{\varphi(\hat x)-\varphi(\hat x+((b_\alpha(\hat x)-D\varphi(\hat x))\varepsilon)}{\varepsilon}-\frac{1}{\varepsilon}\int_0^\varepsilon \frac{1}{2}X(t)^2+\frac{1}{2}(b_\alpha(X(t))-\dot X(t))^2 dt\leq0\,.
$$
Letting $\varepsilon\to 0^+$ and remarking that $X(0)=\hat x$ and $\dot X(0)=b_\alpha(\hat x)-D\varphi(\hat x)$, we obtain after a few computation
$$\frac{1}{2}D\varphi(\hat x)^2-\frac{1}{2}\hat x^2-b_\alpha(\hat x)D\varphi(\hat x)\leq 0.$$
We deduce, by the arbitrariness of $\varphi$, that $v^\alpha$ is a viscosity subsolution of \eqref{hjstat}.

Now we assume that $v^\alpha-\varphi$ attains a local minimum at $\hat x$. Again, we may assume, without loss of generality, that $v^\alpha(\hat x)-\varphi(\hat x)= 0$, so that $v^\alpha\geq \varphi$ in $B_\delta(\hat x)$ for some $\delta>0$. 
Let $C^\alpha(\hat x)$ be like in Lemma \ref{l456} (with $R=\hat x$), $\varepsilon\in(0,\min\{1,\delta^2 /(2(C^\alpha(\hat x)+1))^2\})$ and choose $T>0$ and $X_\varepsilon \in \mathcal{ C}^+(\hat x, 0,T)$ such that
$$d^\alpha(\hat x ,0)+\varepsilon^2> \int_0^T \frac{1}{2} X_\varepsilon(t)^2+\frac{1}{2}(b_\alpha(X_\varepsilon(t))-\dot X_\varepsilon(t))^2 dt\,.$$
 If $T<1$, we may prolong the definition of $X_\varepsilon$ to $[0,1]$ by setting $X_\varepsilon\equiv 0$ in $(T,1]$, and obtaining the above inequality to hold for $T=1$, as well. Therefore we assume, without loss of generality, that $T\geq1$.
Arguing as in the proof of Lemma \ref{l456}, one can deduce that 
$$|X_\varepsilon(t)-\hat x|\leq tC^\alpha(\hat x)+\sqrt{t}(C^\alpha(\hat x)+2)\quad \text{for } t\in[0,T].$$
In particular,  we have $\varepsilon<1\leq T$ and 
$$|X_\varepsilon(t)-\hat x|\leq  t C^\alpha(\hat x)+\sqrt{t}(C^\alpha(\hat x)+2)<2\sqrt{\varepsilon}(C^\alpha(\hat x)+1)\leq \delta  \qquad \forall t\in[0,\varepsilon].$$
As soon as $t\in[0,\varepsilon]$ we can deduce $v^\alpha(X_\varepsilon(t))\geq \varphi(X_\varepsilon(t)))$. Then

\begin{align*}\varphi (\hat x)+\varepsilon^2=&v^\alpha(\hat x)+\varepsilon^2=d^\alpha(\hat x,0)+\varepsilon^2+\psi^\alpha(0)\\
>& \int_0^T \frac{1}{2}X_\varepsilon(t)^2+\frac{1}{2}(b_\alpha(X_\varepsilon(t))-\dot X_\varepsilon(t))^2 dt+\psi^\alpha(0)\\
=&\int_0^{\varepsilon} \frac{1}{2}X_\varepsilon(t)^2+\frac{1}{2}(b_\alpha(X_\varepsilon(t))-\dot X_\varepsilon(t))^2 dt\\+
&\int_0^{T-\varepsilon} \frac{1}{2}X_\varepsilon(t+\varepsilon)^2+\frac{1}{2}(b_\alpha(X_\varepsilon(t+\varepsilon))-\dot X_\varepsilon(t+\varepsilon))^2 dt+\psi^\alpha(0)\\
\geq&\int_0^{\varepsilon} \frac{1}{2}X_\varepsilon(t)^2+\frac{1}{2}(b_\alpha(X_\varepsilon(t))-\dot X_\varepsilon(t))^2 dt+v^\alpha(X_\varepsilon(\varepsilon))\\
\geq&\int_0^{\varepsilon} \frac{1}{2}X_\varepsilon(t)^2+\frac{1}{2}(b_\alpha(X_\varepsilon(t))-\dot X_\varepsilon(t))^2 dt+\varphi(X_\varepsilon(\varepsilon)). 
\end{align*}
Now,
$$\varphi((X_\varepsilon(\varepsilon))-\varphi(\hat x)=\varphi((X_\varepsilon(\varepsilon))- \varphi(X_\varepsilon(0))=\int_0^\varepsilon D\varphi(X_\varepsilon(t))\dot X_\varepsilon(t)dt\,,$$
hence
\begin{equation}\label{ah}
\varepsilon^2>\int_0^{\varepsilon} \frac{1}{2}X_\varepsilon(t)^2+\frac{1}{2}(b_\alpha(X_\varepsilon(t))-\dot X_\varepsilon(t))^2+ D\varphi(X_\varepsilon(t))\dot X_\varepsilon(t)dt\,.
\end{equation}
Since  $y^2/2\geq z y -z^2/2 $ for all $y,z\in\RR$,  setting $y=y(t)=b_\alpha(X_\varepsilon(t))-\dot X_\varepsilon(t)$ and $z=z(t)=D\varphi(X_\varepsilon(t))$, we get
$$\frac{1}{2}(b_\alpha(X_\varepsilon(t))- \dot X_\varepsilon(t))^2\geq (b_\alpha(X_\varepsilon(t)-\dot X_\varepsilon(t)))D\varphi(X_\varepsilon(t))-\frac{1}{2}D\varphi(X_\varepsilon(t))^2\quad \forall t\in[0,\varepsilon]\,.$$
This, together with \eqref{ah}, implies 
\begin{align*}\varepsilon&> \frac1\varepsilon\int_0^{\varepsilon} \frac{1}{2}(X_\varepsilon(t))^2+b_\alpha(X_\varepsilon(t))D\varphi(X_\varepsilon(t)) -\frac{1}{2}D\varphi(X_\varepsilon(t))^2dt\,.
\end{align*}
Letting $\varepsilon\to 0^+$, we finally conclude
$$\frac{1}{2}D\varphi(\hat x)^2-\frac{1}{2}\hat x^2-b_\alpha(\hat x)D\varphi(\hat x)\geq 0.$$
We deduce, by the arbitrariness of $\varphi$, that $v^\alpha$ is a also viscosity supersolution of \eqref{hjstat} and this concludes the proof.
\end{proof}

\subsection{Preliminary results and proof of Theorem \ref{thm1}}\label{s42}
We begin with the following  regularity result for $\psi$.
\begin{lemma}For all $\alpha\in[0,1]$, 
$\psi^\alpha$ is locally Lipschitz continuous.
\end{lemma}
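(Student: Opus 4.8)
The plan is to reduce everything to the already-established regularity of $d^\alpha$ via the representation formula
$$\psi^\alpha(x)=\inf_{z\geq 0}\{d^\alpha(x,z)+\phi(z)\},$$
which holds directly by definition: any competitor $X\in\mathcal C^+(x,T)$ for $\psi^\alpha$ ends at some $z=X(T)$ and its running cost is, by the very definition of $d^\alpha(x,z)$ as an infimum, at least $d^\alpha(x,z)$; conversely, for any fixed $z\geq0$ and any curve realizing $d^\alpha(x,z)$ up to an error, adjoining the terminal value $\phi(z)$ produces an admissible competitor for $\psi^\alpha(x)$. First I would record this identity and, as a byproduct, finiteness of $\psi^\alpha$: taking $z=0$ gives $\psi^\alpha(x)\leq d^\alpha(x,0)+\phi(0)<\infty$ for every $x\geq0$, using that $d^\alpha$ is finite by Lemma \ref{l1}.

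Next I would exploit the triangle inequality $d^\alpha(x,z)\leq d^\alpha(x,y)+d^\alpha(y,z)$ from Lemma \ref{l1}. Taking the infimum over $z$ after adding $\phi(z)$ yields $\psi^\alpha(x)\leq d^\alpha(x,y)+\psi^\alpha(y)$, and interchanging the roles of $x$ and $y$ gives $\psi^\alpha(y)\leq d^\alpha(y,x)+\psi^\alpha(x)$. Combining these with the local Lipschitz bound \eqref{dcont}, namely $d^\alpha(x,y)\leq \tilde C^\alpha_R|x-y|$ for all $x,y\in[0,R]$ (which applies to both orderings since $|x-y|=|y-x|$), I obtain
$$|\psi^\alpha(x)-\psi^\alpha(y)|\leq \max\{d^\alpha(x,y),d^\alpha(y,x)\}\leq \tilde C^\alpha_R|x-y|\qquad\text{for }x,y\in[0,R],$$
which is exactly local Lipschitz continuity, with constant $\tilde C^\alpha_R$ inherited from Lemma \ref{l1}.

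An equivalent, self-contained route avoids the representation formula and argues by splicing, exactly as in the proofs of Lemma \ref{l1} and Lemma \ref{gammamodulus}: given a near-optimal pair $(T,X)$ for $\psi^\alpha(x)$, prepend the linear segment running from $y$ to $x$ on $[0,|x-y|]$ (which stays in $[0,R]$ and has $\dot Y\in[-1,1]$), whose additional cost is at most $\tilde C^\alpha_R|x-y|$, and keep the same terminal value; letting the optimization error tend to zero and symmetrizing gives the same estimate. In either formulation the argument is essentially routine; the only points that need care are the uniform control of the connecting segment's cost through the constant $\tilde C^\alpha_R$ (independent of $x,y\in[0,R]$) and the verification that $\psi^\alpha$ is finite, both of which are supplied by Lemma \ref{l1}. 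I do not expect any genuine obstacle here, since all the substantive work has already been done in establishing the local Lipschitz continuity and triangle inequality for $d^\alpha$.
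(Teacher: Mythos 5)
Your proposal is correct. Your second, ``self-contained'' route is exactly the paper's own proof: given a near-optimal pair $(T,X)$ for $\psi^\alpha(x)$, the paper prepends the linear segment from $y$ to $x$ on $[0,|x-y|]$, bounds its cost by $\tilde C^\alpha_R|x-y|$ using the same constant from Lemma \ref{gammamodulus}, and symmetrizes. Your primary route, however, is genuinely different and arguably cleaner: you first establish the representation $\psi^\alpha(x)=\inf_{z\geq 0}\{d^\alpha(x,z)+\phi(z)\}$ (which does hold by the splitting/concatenation argument you sketch, since a competitor for $\psi^\alpha$ ending at $z$ is precisely a competitor for $d^\alpha(x,z)$), and then the Lipschitz estimate falls out formally from the triangle inequality and the bound $d^\alpha(x,y)\leq\tilde C^\alpha_R|x-y|$ of Lemma \ref{l1}, with no further curve construction. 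What this buys is modularity: all the splicing work is done once, in Lemma \ref{l1}, and $\psi^\alpha$ inherits the modulus of $d^\alpha$ as an inf-convolution of a ``distance'' with a fixed function; the paper instead repeats the splicing by hand. The one point worth stating explicitly in your first route is the finiteness of $\psi^\alpha$ before manipulating differences of infima, which you do supply via $\psi^\alpha(x)\leq d^\alpha(x,0)+\phi(0)$. No gap.
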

\begin{proof}
 Fix $R>0$ and choose $x,y\in [0,R]$ such that $\psi^\alpha(x)<\psi^\alpha(y)$. Recall from Lemma \ref{gammamodulus} the definition 
\begin{align*}
\tilde C^\alpha_R:=&\sup\left\{ \frac{1}{2}x^2+\frac{1}{2}(b_\alpha(x)-\xi)^2\mid (x,\xi)\in [0,R]\times [-1,1]\right\}.
\end{align*}
 Choose $T>0$ and $X\in \mathcal C^+(x,T)$ such that 
$$\psi^\alpha(x)+\varepsilon>\int_0^T\frac{1}{2} X(t)^2+\frac{1}{2}(b_\alpha(X(t))-\dot X(t))^2dt+\phi(X(T)).$$
Define $Y\in \mathcal C^+(y, T+|x-y|)$ by 
$$Y(t)=\begin{cases}
y+t\dfrac{x-y}{|x-y|}\quad&\text{ for } 0\leq t< |x-y|,\\
X(t-|x-y|)&\text{ for } |x-y|\leq T+|x-y|.
\end{cases}$$
We have
\begin{align*}
\psi^\alpha(y)\leq&\int_0^{T+|x-y|}\frac{1}{2}Y(t)^2+\frac{1}{2}(b_\alpha(Y(t))-\dot Y(t))^2dt+\phi(Y(T))\\
=&\int_0^{|x-y|}\frac{1}{2}Y(t)^2+\frac{1}{2}(b_\alpha(Y(t))-\dot Y(t))^2dt\\
&+\int_0^{T} \frac{1}{2}X(t)^2+\frac{1}{2}(b_\alpha(X(t))-\dot X(t))^2dt+\phi(X(T))\\
<&\tilde C^\alpha_R|x-y|+\varepsilon+\psi^\alpha(x).
\end{align*}
From this we deduce $|\psi^\alpha(y)-\psi^\alpha(x)|\leq C^\alpha_R|x-y|$ and this completes the proof.
\end{proof}
\begin{lemma}\label{l25}Let $\alpha\in[0,1]$, $R>0$, and $\varepsilon>0$. There exists a constant $T>0$ such that, for each $x,y \in [0,R]$, there exists $S\in(0,T]$ and $X\in \mathcal C^+(x,y,S)$ such that
$$d^\alpha(x,y)+\varepsilon>\int_0^S\frac{1}{2}X(t)^2+\frac{1}{2}(b_\alpha(X(t))-\dot X(t))^2dt.$$
\end{lemma}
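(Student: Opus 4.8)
Lemma \ref{l25} asserts a uniform upper bound on the time horizon needed to nearly achieve $d^\alpha(x,y)$. The subtlety is uniformity: for each pair $(x,y)$ in the compact set $[0,R]^2$, the definition of $d^\alpha(x,y)$ as an infimum over all $T>0$ and $X\in\mathcal C^+(x,y,T)$ produces an $\varepsilon$-minimizer with some horizon $S$, but a priori $S$ could blow up as $(x,y)$ varies. The lemma claims one can choose a single $T$ that works for all pairs simultaneously.

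**Let me think about the approach.** The key observation is that $d^\alpha(x,y)\le \tilde C^\alpha_R|x-y|$ by \eqref{dcont} in Lemma \ref{l1}, and the curve realizing this bound—the straight line $X(t)=x-\frac{t}{|x-y|}(x-y)$ for $t\in[0,|x-y|]$ (and the constant curve when $x=y$, over an arbitrary short horizon)—already runs on a horizon $|x-y|\le R$. So in fact the minimizing sequences need not have large horizons at all: the linear interpolant gives a competitor with horizon at most $R$ achieving cost at most $\tilde C^\alpha_R|x-y|$. I would argue that since this explicit competitor already realizes the value $d^\alpha(x,y)$ up to the additive $\tilde C^\alpha_R|x-y|$ bound, and since $d^\alpha(x,y)\ge 0$, the only concern is whether an $\varepsilon$-minimizer might genuinely require a long horizon. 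But the point of the lemma is merely \emph{existence} of \emph{some} $\varepsilon$-minimizer with bounded horizon, not that all of them are short.

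**The plan.** First I would fix $R$ and $\varepsilon$ and set $T:=R$ (or $T:=\max\{R,1\}$ to also cover the diagonal case $x=y$ where the straight-line construction degenerates). For $x\ne y$ with $x,y\in[0,R]$, I would take $X$ to be the straight-line interpolant on $S:=|x-y|\le R$; by the computation in step 2 of Lemma \ref{l1}, its cost is at most $\tilde C^\alpha_R|x-y|$. This is not yet an $\varepsilon$-minimizer of $d^\alpha(x,y)$, so the honest route is different: I would instead invoke the infimum definition directly, selecting some $T_0>0$ and $X_0\in\mathcal C^+(x,y,T_0)$ with cost below $d^\alpha(x,y)+\varepsilon$, and then \emph{truncate} the long-horizon portion. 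Concretely, since the integrand $\tfrac12 X^2+\tfrac12(b_\alpha(X)-\dot X)^2$ is nonnegative and the total integral is finite, most of the horizon contributes negligibly; one can find a time $S\le T$ at which $X_0(S)$ is close to $y$ and splice in a short linear segment reaching $y$ exactly, paying only a controlled extra cost. The uniformity of $T$ then comes from the uniform bound $\tilde C^\alpha_R$ on the integrand over $[0,R]\times[-1,1]$, which does not depend on the particular pair $(x,y)$.

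**The main obstacle.** The delicate point is the splicing/truncation argument: I must guarantee that along the near-optimal curve $X_0$ there exists a time $S$, bounded by a pair-independent $T$, where $X_0$ is simultaneously (i) within $[0,R']$ for some fixed enlarged radius $R'$ (so the integrand is uniformly controlled) and (ii) close enough to $y$ that the corrective linear segment costs less than a fixed fraction of $\varepsilon$. Here I expect to reuse the a-priori bound $|X_0(t)|\le C_2^\alpha(R)$ from Lemma \ref{l29} (adapted to the $d^\alpha$ setting, since the running cost controls $\hat B_\alpha$), together with the $\sqrt{t}$-type displacement estimate $\sigma_R$ from Lemma \ref{l456}, to trap the trajectory in a fixed compact set and force it to reach a neighborhood of $y$ within a uniformly bounded time. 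Establishing that the required horizon $T$ can be chosen \emph{before} and \emph{independently of} $(x,y)$—rather than depending on the realized minimizer—is where the compactness of $[0,R]^2$ and the pair-independence of all the constants $C_i^\alpha(R)$, $\tilde C^\alpha_R$ must be combined carefully.
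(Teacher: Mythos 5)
There is a genuine gap: the central difficulty of the lemma --- producing a horizon $T$ that works uniformly over $(x,y)\in[0,R]^2$ --- is exactly the step you leave unresolved, and the surgery you sketch for it does not work as described. You propose to take an arbitrary $\varepsilon$-minimizer $X_0$ on some (possibly huge) horizon $T_0$ and ``find a time $S\le T$ at which $X_0(S)$ is close to $y$,'' then splice in a short corrector. But since $b_\alpha(0)=0$, a near-optimal curve from $x$ to $y$ may loiter at the origin at essentially zero running cost for an arbitrarily long time and only move to $y$ at the very end of $[0,T_0]$; for such a curve there is no time $S$ bounded by a pre-chosen $T$ at which $X_0(S)$ is near $y$ (take $y=R$, say). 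The a priori bounds you invoke (Lemma \ref{l29}, the $\sigma_R$ estimate of Lemma \ref{l456}) only prevent the trajectory from moving too fast; they cannot force it to reach a neighborhood of $y$ early. A correct excision argument would have to cut out a long \emph{interior} interval on which $X_0$ stays near $0$ and reconnect the two endpoints of that interval, which is a different and more delicate operation than the truncation you describe, and you do not carry it out. Also, your invocation of compactness of $[0,R]^2$ at the end is not attached to any actual mechanism.

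The paper's proof resolves the uniformity in a different and cleaner way: for each fixed base pair $(\bar x,\bar y)$ it picks \emph{one} near-minimizing curve $Y\in\mathcal C^+(\bar x,\bar y,\bar T)$ with cost below $d^\alpha(\bar x,\bar y)+\varepsilon/4$, and then, using the local Lipschitz continuity of $d^\alpha$ (Lemma \ref{l1}) to choose $\delta$ with $|d^\alpha(x,y)-d^\alpha(\bar x,\bar y)|<\varepsilon/2$ on a $\delta$-neighborhood, it transports this single curve to every nearby pair $(x,y)$ by prepending a linear segment from $x$ to $\bar x$ and appending one from $\bar y$ to $y$, each of duration $\delta$ and cost at most $C_R\delta\le\varepsilon/8$. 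This gives one horizon $\bar T+2\delta$ valid on a whole neighborhood of $(\bar x,\bar y)$, and a finite subcover of the compact square then yields the global $T$. If you want to complete your argument, you should either adopt this localize-and-cover scheme or fully develop the interior-excision surgery; as written, the proposal identifies the obstacle but does not overcome it.
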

\begin{proof}
Let $R>0$ and $\varepsilon\in(0,1)$. Fix $\bar x,\bar y\in [0,R]$ and choose $\bar T>0$ and $Y\in \mathcal C^+(\bar x,\bar y, \bar T)$ so that 
$$d(\bar x,\bar y)+\frac{\varepsilon}{4}>\int_0^{\bar T} \frac{1}{2}Y(t)^2+\frac{1}{2}(b_\alpha(Y(t))-\dot Y(t))^2dt.$$
Let
$$C_R:=\max\left\{\frac{1}{2}x^2+\frac{1}{2}(b_\alpha(x)+y)^2\mid |x|\leq 1+R, |y|\leq 1\right\}.$$
Fix $\delta\in(0,1)$ so that
$$2 C_R\delta\leq \frac{\varepsilon}{4}$$
and
$$|d^\alpha(x,y)-d^\alpha(\bar x,\bar y)|<\frac{\varepsilon}{2}\quad \text{for } x\in B(\bar x,\delta),\,y\in B(\bar y,\delta).$$
Let $x\in B(\bar x,\delta)\cap [0,+\infty)$ and $y\in B(\bar y,\delta)\cap [0,+\infty)$,  Define $\xi \in \mathcal C^+(x,\bar x,\delta)$ and $\eta \in \mathcal C^+(\bar y,y,\delta)$, respectively,  by
$$ \xi(t)=x+\frac{t}{\delta}(\bar x-x) \quad \text{for } 0\leq t\leq \delta$$
$$ \eta(t)=\bar y+\frac{t}{\delta}(y-\bar y) \quad \text{for } 0\leq t\leq \delta.$$
Noting that $\xi(t),\eta(t)\in [0,R+1]$ and $\dot \xi(t),\dot \eta(t) \in B (0,1)$ for all $t\in [0,\delta]$, we have
$$\int_0^\delta  \frac{1}{2}\xi(t)^2+\frac{1}{2}(b_\alpha(\xi(t))-\dot \xi(t))^2dt\leq C_R\delta$$
$$\int_0^\delta  \frac{1}{2}\eta(t)^2+\frac{1}{2}(b_\alpha(\eta(t))-\dot \eta(t))^2dt\leq C_R\delta.$$
Define the function $X\in \mathcal C^+(x,y,\bar T+2\delta)$ by 
$$X(t)=\begin{cases}\xi(t)& \text{ for } t\in [0,\delta],\\
Y(t-\delta) &  \text{ for } t\in [\delta,\bar T+\delta],\\
\eta(t-\bar T-\delta) &\text{ for } t\in [\bar T+\delta,\bar T+2\delta].
\end{cases}$$
Then we have
\begin{align*}
\int_0^{\bar T+2\delta} \frac{1}{2}X(t)^2&+\frac{1}{2}(b_\alpha(X(t))-\dot X(t))^2dt=\int_0^\delta  \frac{1}{2}\xi(t)^2+\frac{1}{2}(b_\alpha(\xi(t))-\dot \xi(t))^2dt\\
&+ \int_0^{\bar T} \frac{1}{2}Y(t)^2+\frac{1}{2}(b_\alpha(Y(t))-\dot Y(t))^2dt\\
&+ \int_0^{\delta} \frac{1}{2}\eta(t)^2+\frac{1}{2}(b_\alpha(\eta(t))-\dot \eta(t))^2dt\\
\leq&2C_R\delta+d(\bar x,\bar y)+\frac{\varepsilon}{4}\leq d^\alpha(\bar x,\bar y)+\frac{\varepsilon}{2}\\
<&d^\alpha(x,y)+\varepsilon.
\end{align*}
We deduce that for each $(\bar x,\bar y)\in[0,R]\times [0,R]$ there exist constants $\bar S>0$ and $\delta>0$ such that, for any $x\in B(\bar x,\delta)\cap [0,+\infty)$ and $y\in B(\bar y,\delta)\cap[0,+\infty)$, we have
$$d^\alpha(x,y)+\varepsilon>\int_0^{\bar S}\frac{1}{2}X(t)^2+\frac{1}{2}(b_\alpha(X(t))-\dot X(t))^2dt$$
for some $X\in \mathcal C^+(x,y,\bar S)$. From now on the proof goes as \cite[Lemma 2.5]{FIL06} (with small adaptations): we report it for completeness. By the compactness of $[0,R]\times[0,R]$, there exists a finite collection of $(x_k,y_k)\in [0,R]\times[0,R]$,
$S_k,\delta_k>0$, for $k=1,\dots,K$, such that
$$[0,R]\times[0,R]\subseteq \bigcup_{k=1}^K B(x_k,\delta_k)\times B(y_k,\delta_k)$$
and such that for any $k \in \{1,2\dots,K\}$,
$x \in B(x_k, \delta_k)\cap[0,+\infty)$, and $y \in B(y_k, \delta_k)\cap[0,+\infty)$,
$$d^\alpha(x,y)+\varepsilon>\int_0^{S_k}\frac{1}{2}X(t)^2+\frac{1}{2}(b_\alpha(X(t))-\dot X(t))^2dt$$
for some $X\in \mathcal C^+(x,y, S_k)$. Setting $T = \max_{1\leq k \leq K} S_k$, we observe that for any $(x, y) \in
[0,R] \times [0,R]$,
$$d(x, y) + \varepsilon >\int_0^{S_k}\frac{1}{2}X(t)^2+\frac{1}{2}(b_\alpha(X(t))-\dot X(t))^2dt$$
for some $S \in  (0,T]$ and $X \in C^+(x,y,S)$, and this concludes the proof.
\end{proof}

\begin{lemma}\label{l26}Let $\alpha\in[0,1]$, $R>0$, and $\varepsilon>0$. There exists a constant $T>0$ such that, for each $x\in [0,R]$, there exist $S\in(0,T]$ and $X\in \mathcal C^+(x,S)$ such that
$$\psi^\alpha(0)+\varepsilon>\int_0^S\frac{1}{2}X(t)^2+\frac{1}{2}(b_\alpha(X(t))-\dot X(t))^2dt+\phi(X(S)).$$
\end{lemma}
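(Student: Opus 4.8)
The plan is to reproduce, for the functional defining $\psi^\alpha$, the local-construction-plus-compactness scheme already used for $d^\alpha$ in Lemma \ref{l25}, so that the duration of the competitor curves is bounded by a single constant $T$ valid for every starting point $x\in[0,R]$. I would begin by recording the base identity $\psi^\alpha(0)=\phi(0)$: the null curve $X\equiv 0$ lies in $\mathcal C^+(0,S)$ for every $S>0$, has zero running cost since $b_\alpha(0)=0$, and terminal cost $\phi(0)$, while every admissible curve has nonnegative running cost and terminal cost at least $\phi(0)$ because $\phi$ attains its global minimum at the origin. Hence $\psi^\alpha(0)$ is already attained, to within any prescribed tolerance, by trajectories of arbitrarily short duration, and $0$ is the global minimum point of $\psi^\alpha$.

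For a fixed starting point I would assemble the competitor by concatenation. Applying Lemma \ref{l25} with target $y=0$ furnishes a uniform time $T_0$ and, for the given $x$, a curve $Z\in\mathcal C^+(x,0,S_1)$ with $S_1\le T_0$ whose running cost is within $\varepsilon/2$ of $d^\alpha(x,0)$; prolonging $Z$ by the null curve at the origin yields $X\in\mathcal C^+(x,S)$ with $S$ bounded uniformly in $x$, terminal value $0$, and terminal cost $\phi(0)=\psi^\alpha(0)$. To pass from this pointwise construction to one uniform in $x\in[0,R]$, I would argue exactly as in the second half of Lemma \ref{l25}: cover the compact interval $[0,R]$ by finitely many balls on which a common duration works, use the local Lipschitz continuity of $d^\alpha$ from Lemma \ref{l1} together with the continuity of $\psi^\alpha$ to transfer the near-optimal curve from each base point to its neighbors at the price of a short, cheap connector, and finally set $T$ equal to the largest of the finitely many durations.

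The main obstacle is the cost bookkeeping in the concatenation: the connector driving $x$ to the origin contributes the positive amount $d^\alpha(x,0)$, so a direct estimate only delivers the weaker bound $d^\alpha(x,0)+\psi^\alpha(0)+\varepsilon$. Matching the sharper right-hand side $\psi^\alpha(0)+\varepsilon$ stated in the lemma is the crux, and I would attack it through the minimality of $\psi^\alpha$ at the origin and the continuity estimates near $0$, choosing the base points of the finite cover and the associated connectors so that the surplus is kept below $\varepsilon$ while a single time bound $T$ is retained. Verifying that this control can be carried out uniformly over all of $[0,R]$ is the delicate point on which the argument stands or falls, and it is precisely where the local Lipschitz bounds for $d^\alpha$ and the continuity of $\psi^\alpha$ must be combined with the compactness of $[0,R]$.
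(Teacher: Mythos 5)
Your concatenation scheme stalls exactly where you say it does, and it is worth being precise about why the ``crux'' you defer cannot be repaired: the inequality as printed is not provable for general $x$. By the very definition of $\psi^\alpha(x)$ as an infimum over all horizons $S>0$ and all $X\in\mathcal C^+(x,S)$, every competitor issuing from $x$ has total cost at least $\psi^\alpha(x)$; so the displayed bound, holding for every $\varepsilon>0$, would force $\psi^\alpha(x)\le\psi^\alpha(0)$, i.e.\ $\psi^\alpha\equiv\phi(0)$ on $[0,R]$. This fails in general: writing $\tfrac{1}{2}(b_\alpha(X)-\dot X)^2\ge \tfrac{1}{2}\dot X^2-b_\alpha(X)\dot X$ and integrating the total derivative $\tfrac{d}{dt}\hat B_\alpha(X(t))=-b_\alpha(X(t))\dot X(t)$ (as in Lemma \ref{l29}), any curve from $x$ ending at $y$ costs at least $\tfrac{1}{2}|x^2-y^2|+\hat B_\alpha(y)-\hat B_\alpha(x)+\phi(y)$; with $\rho>1$, $\phi(y)=y$ and $0<x\le E$ this gives $\psi^\alpha(x)\ge\min\{x,x^2/2\}>0=\psi^\alpha(0)$. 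Consequently no choice of base points or connectors can squeeze the surplus $d^\alpha(x,0)$ below $\varepsilon$ uniformly. What your construction (Lemma \ref{l25} curve to the origin followed by the null curve) actually proves is the true estimate that, within uniformly bounded time, the cost from $x$ is below $d^\alpha(x,0)+\psi^\alpha(0)+\varepsilon=v^\alpha(x)+\varepsilon$ (by Theorem \ref{thmva}) --- which is precisely the upper-bound half of the proof of Theorem \ref{thm1}, not this lemma. The resolution is that the statement carries a typo: the left-hand side should read $\psi^\alpha(x)+\varepsilon$, the analogue of Lemma 2.6 of \cite{FIL06}; note that in the proof of Theorem \ref{thm1} the lemma is invoked only at $x=0$, where the two readings coincide, which is presumably how the slip survived.

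For the corrected statement, the proof the paper intends (it is omitted with the remark that it is ``similar to Lemma \ref{l25}'') is the covering scheme run directly on the $\psi$-functional, not a reduction through $d^\alpha(x,0)$: fix $\bar x\in[0,R]$ and a pair $(\bar T,Y)$ with $Y\in\mathcal C^+(\bar x,\bar T)$ whose total cost, including $\phi(Y(\bar T))$, is within $\varepsilon/4$ of $\psi^\alpha(\bar x)$; for $x\in B(\bar x,\delta)\cap[0,+\infty)$ prepend the linear connector from $x$ to $\bar x$ of duration $\delta$, whose running cost is at most $C_R\delta$; shrink $\delta$ so that $C_R\delta\le\varepsilon/4$ and, by the local Lipschitz continuity of $\psi^\alpha$ established at the beginning of Section \ref{s42}, $|\psi^\alpha(x)-\psi^\alpha(\bar x)|<\varepsilon/2$; then extract a finite subcover of $[0,R]$ and take $T$ to be the largest of the finitely many durations $\bar T_k+\delta_k$. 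This is in fact slightly simpler than Lemma \ref{l25}, since only one connector is needed: the terminal point, hence the terminal cost $\phi(Y(\bar T))$, is left untouched. Your compactness machinery and your observation that $\psi^\alpha(0)=\phi(0)$ is the global minimum are both sound; the missing move was to question the printed statement rather than to try to absorb an irreducible $d^\alpha(x,0)$.
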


\begin{proof}The proof is similar to Lemma \ref{l25}, hence we omit it. 
\end{proof}

\subsubsection{Proof of Theorem \ref{thm1}} 
Fix $\alpha\in[0,1]$, $R>0$ and $\varepsilon>0$. Using Lemma \ref{l25} and Lemma \ref{l26}, let $T_{R,\varepsilon}$ be such that, for all $x\in [0,R]$, there exist $S,S'\in(0,T]$, $X\in \mathcal C^+(x,0,S)$ and $Y\in \mathcal C^+(0,S')$ satisfying 
$$d^\alpha(x,0)+\varepsilon\geq\int_0^S\frac{1}{2}X(t)^2+\frac{1}{2}(b_\alpha(X(t))-\dot X(t))^2dt$$
and 
$$\psi^\alpha(0)+\varepsilon\geq\int_0^{S'}\frac{1}{2}Y^2(t)+\frac{1}{2}(b_\alpha(Y(t))-\dot Y(t))^2dt+\phi(Y(S')).$$
Fix $t\geq 2T_{R,\varepsilon}$ and define 
$$Z(s):=\begin{cases}
X(s) & \text{for } s\in [0,S]\\
0 & \text{for } s\in (S,t-S']\\
Y(s-t+S')& \text{for } s\in (t-S',t].
\end{cases}$$
By Proposition \ref{DPP}, we get
\begin{align*}
u^\alpha(x,t)&\leq \int_0^{t}\frac{1}{2}Z^2(s)+\frac{1}{2}(b_\alpha(Z(s))-\dot Z(s))^2dt+\phi(Z(t))\\
&=\int_0^{S}\frac{1}{2}X^2(s)+\frac{1}{2}(b_\alpha(X(s))-\dot X(s))^2dt\\
&+
\int_0^{S'}\frac{1}{2}Y^2(s)+\frac{1}{2}(b_\alpha(Y(s))-\dot Y(s))^2dt+\phi(Y(S'))\\
&\leq d^\alpha(x,0)+\varepsilon+\psi^\alpha(0)+\varepsilon=v^\alpha(x)+2\varepsilon.
\end{align*}
Therefore, 
\begin{equation}\label{upper}
u^\alpha(x,t)\leq v^\alpha(x)+\varepsilon \quad \text{for } x\in [0,R],\,t\geq 2T_{R,\varepsilon}.
\end{equation}
 Now, choose $\varepsilon\in(0,1)$. We prove that there exists a sufficiently large $T$ such that
\begin{equation}\label{lower}
v^\alpha(x)\leq u^\alpha(x,t)+\varepsilon \quad \text{for } x\in[0,R],\,t\geq T.
\end{equation}

Recall the definition \eqref{Bdef} and consider the integral function
$$\hat B_\alpha(y,x):=\hat B_\alpha(y)-\hat B_\alpha(x)=\int_x^y b_\alpha(z)dz.$$
Recall the definition of endemic population $E:=N(1-1/\rho)$ and note that, for all $x\in [0,R]$,
$$\hat B_\alpha(y,x)\leq \hat B_\alpha(E)-\hat B_\alpha(x)\leq \hat B_\alpha(E)-\min\{\hat B_\alpha(x) \mid x\in [0,R]\}=: C_4^\alpha(R)\,.$$
Let $C^\alpha_1(R)$ and $C^\alpha_2(R)$ be respectively like in \eqref{C1} and \eqref{C2}. By taking a larger $C^\alpha_1(R)$ if necessary, also assume 
\begin{equation}\label{c1}C_1^\alpha(R)\geq \max\{\frac{1}{2}x^2+\frac{1}{2}(b_\alpha(x)+y)^2\mid |x|\leq C_2(R),\, |y|\leq 1\}.\end{equation}
Set
$$\bar C^\alpha(R):=C_1^\alpha(R)+C_4^\alpha(R).$$
Fix $\delta\in(0,1)$ so that 
$$2\bar C^\alpha(R)\delta <\varepsilon$$
and define $\bar \gamma=\min\{\frac{1}{2}z^2+\frac{1}{2}b_\alpha^2(z)\mid |z|\geq \delta\}$.
Then define
$$T:=\frac{1}{\bar\gamma}(2\bar C^\alpha(R)+1).$$
 Let $t\geq T$ and $ X\in \mathcal C^+(x,T)$ such that
$$u^\alpha(x,t)+\varepsilon>\int_0^{t}\frac{1}{2} X^2(s)+\frac{1}{2}(b_\alpha(X(s))-\dot X(s))^2dt+\phi(X(t))\,.$$
Then
\begin{align*}
u^\alpha(x,t)+1&>\int_0^{t}\frac{1}{2} X^2(s)+\frac{1}{2}b^2_\alpha(X(s))ds -\int_0^t b_\alpha(X(s))\dot X(s)ds\\
&=\int_0^{t}\frac{1}{2} X^2(s)+\frac{1}{2}b_\alpha(Y(s))^2ds -\int_x^{X(t)}b_\alpha(z)dz\\
&=\int_0^{t}\frac{1}{2} X^2(s)+\frac{1}{2}b_\alpha(Y(s))^2ds -B_\alpha(X(t),x)\\
&\geq \int_0^{t}\frac{1}{2} X^2(s)+\frac{1}{2}b_\alpha(Y(s))^2ds - C_4^\alpha(R). 
\end{align*}
In view of Lemma \ref{l28}, we get
\begin{equation}\label{contr}
\int_0^{t}\frac{1}{2} X^2(s)+\frac{1}{2}b_\alpha^2(X(s))ds< C_1(R)+C_4^\alpha(R)+1=\bar C^\alpha(R)+1.
\end{equation}
 We now prove that 
 \begin{equation}\label{minumy}
 X(s)\leq\delta\quad \text{for some $s\in [0,t]$.}\end{equation} Assume on the contrary that $X(s)>\delta$ for all $s\in[0,t]$. Then, by the definition of $\bar\gamma$,
\begin{equation*}
\int_0^{t}\frac{1}{2} X^2(s)+\frac{1}{2}b_\alpha(X(s))^2ds\geq \bar\gamma t\geq \bar \gamma T=2\bar C_1(R)+1\,,
\end{equation*}
  and we get the required contradiction with \eqref{contr}. Let $\tau \in [0,t]$ be such that $0\leq X(\tau)\leq \delta$ and note that, by Lemma \ref{l29},
  $$|X(s)|\leq C^\alpha_2(R)\quad \text{for } s\in [0,t].$$

 Now, define $\xi\in \mathcal C^+(Y(\tau),0,\delta)$ and $\eta \in \mathcal C(0,Y(\tau),\delta)$ respectively as 
 $$Y(s):=X(s)-\frac{s}{\delta}X(s)\quad \text{for } s\in[0,\delta]\,,$$
 $$Z(s):=\frac{s}{\delta}X(s)\quad \text{for } s\in[0,\delta]\,.$$
 Noting that 
 $$Y(s),Z(s) \in[0,C_2(R)],\quad \dot Y(s),\dot Z(s)\in B(0,R)\qquad \text{for } s\in [0,t],$$
in view of \eqref{c1}, we have
 $$\int_0^\delta \frac{1}{2}Y^2(s)+\frac{1}{2}(b_\alpha(Y(s))-\dot Y(s))^2ds \leq C^\alpha_1(R)\delta$$
 and 
 $$\int_0^\delta \frac{1}{2}Z^2(s)+\frac{1}{2}(b_\alpha(Z(s))-\dot Z(s))^2ds \leq C^\alpha_1(R)\delta.$$
 Define the function $\eta\in\mathcal C(x,\delta, t+2\delta)$
 $$\eta(s):=\begin{cases}
 X(s) &s\in[0,\tau]\\
 Z(s-\tau)&s\in(\tau,\tau+\delta]\\
 Y(s-\tau-\delta)&s\in(\tau+\delta,\tau+2\delta]\\
 X(s)&s\in(\tau+2\delta,t+2\delta].
 \end{cases}$$
 Then 
 \begin{align*}
 \int_0^t \frac{1}{2}\eta^2(s)&+\frac{1}{2}(b_\alpha(\eta(s))-\dot \eta(s))^2ds + \phi(\eta(s))\\
 &=\int_0^t \frac{1}{2}X^2(s)+\frac{1}{2}(b_\alpha(X(s))-\dot X(s))^2ds+\phi(X(t))\\
 &+\int_0^\delta\frac{1}{2} Y(s)^2+\frac{1}{2}(b_\alpha(Y(s))-\dot Y(s))^2ds\\
 &+\int_0^\delta \frac{1}{2}Z(s)^2+\frac{1}{2}(b_\alpha(Z(s))-\dot Z(s))^2ds\\
 &<u^\alpha(x,t)+\varepsilon+2C^\alpha_1(R)\delta\\
 &\leq u^\alpha(x,t)+\varepsilon+2\bar C^\alpha_1(R)\delta<u^\alpha(x,t)+2\varepsilon.
 \end{align*}
 On the other hand, we have
 $$d^\alpha(x,0)\leq \int_0^{\tau+\delta}\frac{1}{2} \eta^2(s)+\frac{1}{2}(b_\alpha(\eta(s))-\dot \eta(s))^2)ds $$
 and
 $$\phi(0)\leq  \int_{\tau+\delta}^{t+2\delta} \frac{1}{2}\eta^2(s)+\frac{1}{2}(b_\alpha(\eta(s)-\dot \eta(s))^2)ds+\phi(\eta(t+2\delta)).$$
 Therefore 
 $$v^\alpha(x)=d^\alpha(x,0)+\phi^\alpha(0)\leq u^\alpha(x,t)+2\varepsilon\,.$$
 This, together with \eqref{lower} and the arbitrariness of $\varepsilon$, concludes the proof. 
 \begin{flushright}
 $\square$
 \end{flushright}

\section{Numerical approximation and simulations}\label{s5}
In this section, we introduce a numerical scheme for solving the Hamilton-Jacobi equation \eqref{hj}, and we explore some properties of the numerical solution to confirm the results presented in the previous sections. In particular, we analyze its asymptotic behavior in time, in comparison with the solution of the stationary problem \eqref{hjstat}. Then, we employ a simple Euler integrator to build optimal trajectories for the control problem \eqref{value}, and we show the results in different scenarios.

For the reader's convenience, we recall here the Hamilton-Jacobi equation \eqref{hj} together with the boundary condition in space at $x=0$, provided by Lemma \ref{l28}:
$$
\left\{
\begin{array}{ll}
u_t+H(x,Du)=0 & (x,t)\in [0,+\infty)\times[0,+\infty)\\
u(x,0)=\phi(x)     &  x\in [0,+\infty)\\
u(0,t)=\phi(0)     &  t\in [0,+\infty)\,,\\
\end{array}
\right.
$$
where the Hamiltonian $H:[0,+\infty)\times \RR\to \RR$, given by
$$
H(x,p)=(-b_\alpha(x)+\frac{1}{2}p)p-\frac{1}{2}x^2\,,
$$
is rewritten in a form suitable for the discretization.  
We approximate the unbounded space-time domain by means of a rectangle $[0,x_{\max}]\times[0,T]$, for sufficiently large real numbers $x_{\max}>0$ and $T>0$, and we introduce a uniform grid with nodes $(x_i,t_n)=(i\Delta x,n\Delta t)$ for $i=0,\dots,N_x$ and $n=0,\dots,N_t$, where $\Delta x=\frac{x_{\max}}{N_x}$ and $\Delta t=\frac{T}{N_t}$ denote space and time steps respectively, and $N_x, N_t$ are given integers. 

Moreover, we denote the approximations of $u$, $b_\alpha$, $\phi$ on the grid respectively by $U_i^n\simeq u(x_i,t_n)$, $B_{\alpha,i} \simeq  b_\alpha(x_i)$, $\Phi_i \simeq \phi(x_i)$, and we collect them in the vectors $U^n=(U_0^n,\dots,U_{N_x}^n)$, $B_\alpha=(B_{\alpha,0},\dots,B_{\alpha,N_x})$, $\Phi=(\Phi_0,\dots,\Phi_{N_x})$.  
Then, we introduce finite differences for approximating space and time derivatives. In particular, discretization in space requires some care, in order to define a numerical Hamiltonian $H^\sharp$ which correctly approximates viscosity solutions of the equation. More precisely, using forward/backward differences, 
we introduce the following two-sided approximation of $Du$, 
$$
DU_i=(D_LU_i,D_RU_i):=\left(\frac{U_i-U_{i-1}}{\Delta x},\frac{U_{i+1}-U_{i}}{\Delta x} 
\right)\,,
$$
for $i=1,\dots,N_x-1$, 
and we set 
$$
H^\sharp(x_i,DU_i):=\Big(-B_{\alpha,i}+\frac12 D_LU_i\Big)^+ D_LU_i+\Big(-B_{\alpha,i}+\frac12 D_RU_i\Big)^- D_RU_i\,,
$$
which selects the gradient components in an upwind fashion, according to the sign of $-B_\alpha+\frac12 DU$,
where $(\cdot)^+=\max\{\cdot,0\}$, $(\cdot)^-=\min\{\cdot,0\}$ denote respectively the positive and negative parts of their arguments (for further details, we refer the interested reader to \cite{sethian,falfer}).

Finally, we employ a forward difference in time, and we end up with the following explicit time-marching scheme:
$$
\left\{
\begin{array}{ll}
U_i^{n+1}=U_i^n-\Delta t H^\sharp(x_i,DU_i^n) & i=1,\dots,N_x,\,\, n=0,\dots,N_t\\
U_i^0=\Phi_i     &  i=0,\dots,N_x\\
U_0^n=\Phi_0     &  n=0,\dots,N_t\,.
\end{array}
\right.
$$
We remark that the forward difference $D_RU_i$ is not defined for $i=N_x$, corresponding to the point $x=x_{\max}$ at the right boundary. Since the feedback control for the underlying control problem is given by $\xi=-Du$, we can argue that, for $x_{\max}$ sufficiently large (e.g. greater than the endemic population $E=N(1-\frac{1}{\rho})$ if $\rho>1$), the optimal dynamics in \eqref{controlya} pushes to the left, and towards the origin, the trajectory starting from $x_{\max}$. This implies that $Du(x_{\max},t)\ge 0$ for all $t\in(0,T)$, and we also have $b_{\alpha}(x_{\max})\le 0$. We conclude that $(-b_\alpha(x_{\max})+\frac12 Du(x_{\max},t))^-=0$, hence 
we can always select for the node $i=N_x$ just the contribution given by the backward approximation $D_LU_i$ .

We also remark that, due to the definition of $b_\alpha$, the right hand side of the above scheme becomes larger and larger as $x_{\max}$ increases. This requires a severe CFL restriction on the discretization steps $\Delta t$ and $\Delta x$, in order to preserve stability. 

Once the solution has been computed, we reconstruct the following semi-discrete feedback control, by merging the two components of $DU$ and interpolating their values in space,  via a linear interpolation operator $\mathbb{I}$:
$$\xi^n(\cdot)=-\mathbb{I}[ (D_LU^n)^+ + (D_RU^n)^-](\cdot)\,.$$
Then, we build the optimal trajectories by integrating \eqref{controlya}, using a simple forward Euler scheme
$$
\begin{cases}
y^{n+1}=y^n+\Delta t\left(b_\alpha(y^n)+\xi^n(y^n)\right)\\
y^0=x\,.\end{cases}
$$

Now, let us setup the parameters for the numerical experiments. We choose the space domain size $x_{\max}=4$, the final time $T=5$, and $N_x=200$, $N_t=4000$ nodes in space and time respectively. Moreover, we choose $M(\alpha)\equiv 1$, $N=\frac94$ and  $\gamma=1$ as model parameters defining the advection speed $b_\alpha$ in \eqref{ordSIS}. On the other hand, $\alpha$ and $\rho$ (and accordingly $\beta=\gamma \rho$) will be set differently for each test.

We start by choosing the exit cost $\phi(x)=x$ and a reproduction factor $\rho=\frac{3}{2}$ such that the corresponding endemic population $E=\frac{3}{4}$ is a stable equilibrium (attractive in the domain $[0,x_{\max}]$) for the uncontrolled system, see Proposition \ref{equilibrium}. In Figure \ref{Test1}, we show the results obtained for $\alpha=1$ and $\alpha=\frac{1}{2}$ at different times. In the left panels, we report the value functions compared to $\phi$, in the right ones the corresponding optimal controls. 
\begin{figure}[!h]
	\centering{
	\includegraphics[width=0.45\textwidth]{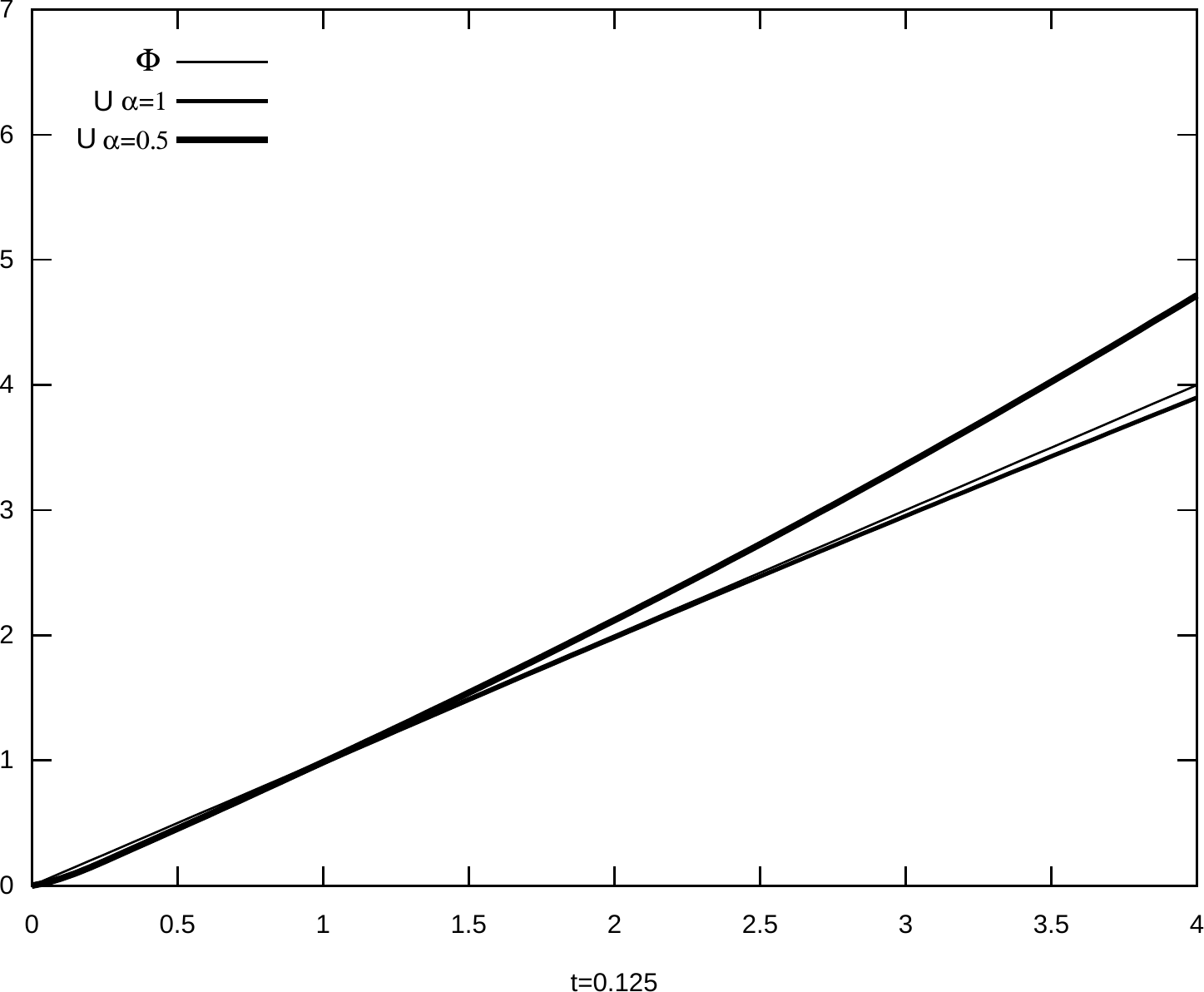} 
	\includegraphics[width=0.45\textwidth]{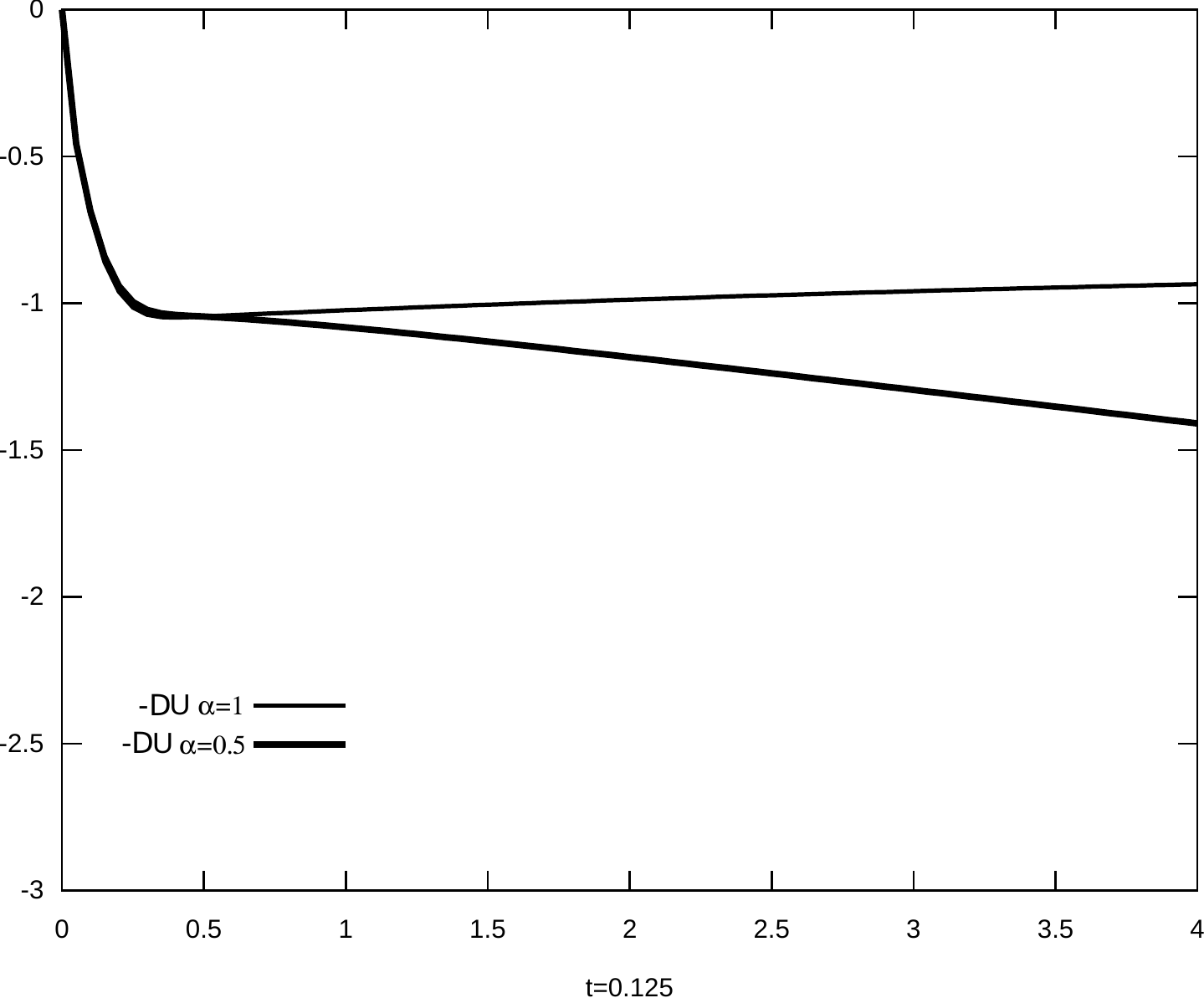}\\
	\includegraphics[width=0.45\textwidth]{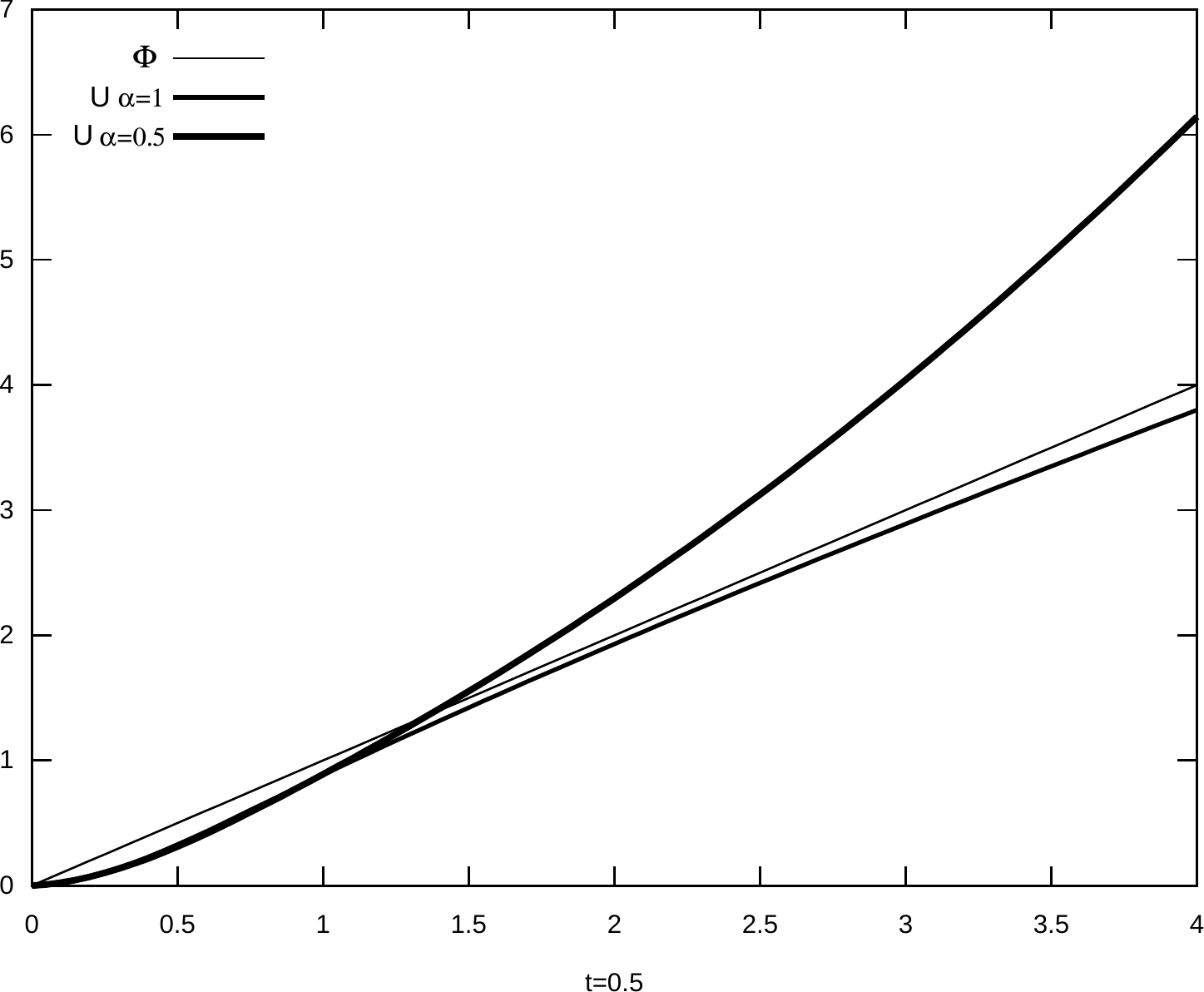} 
	\includegraphics[width=0.45\textwidth]{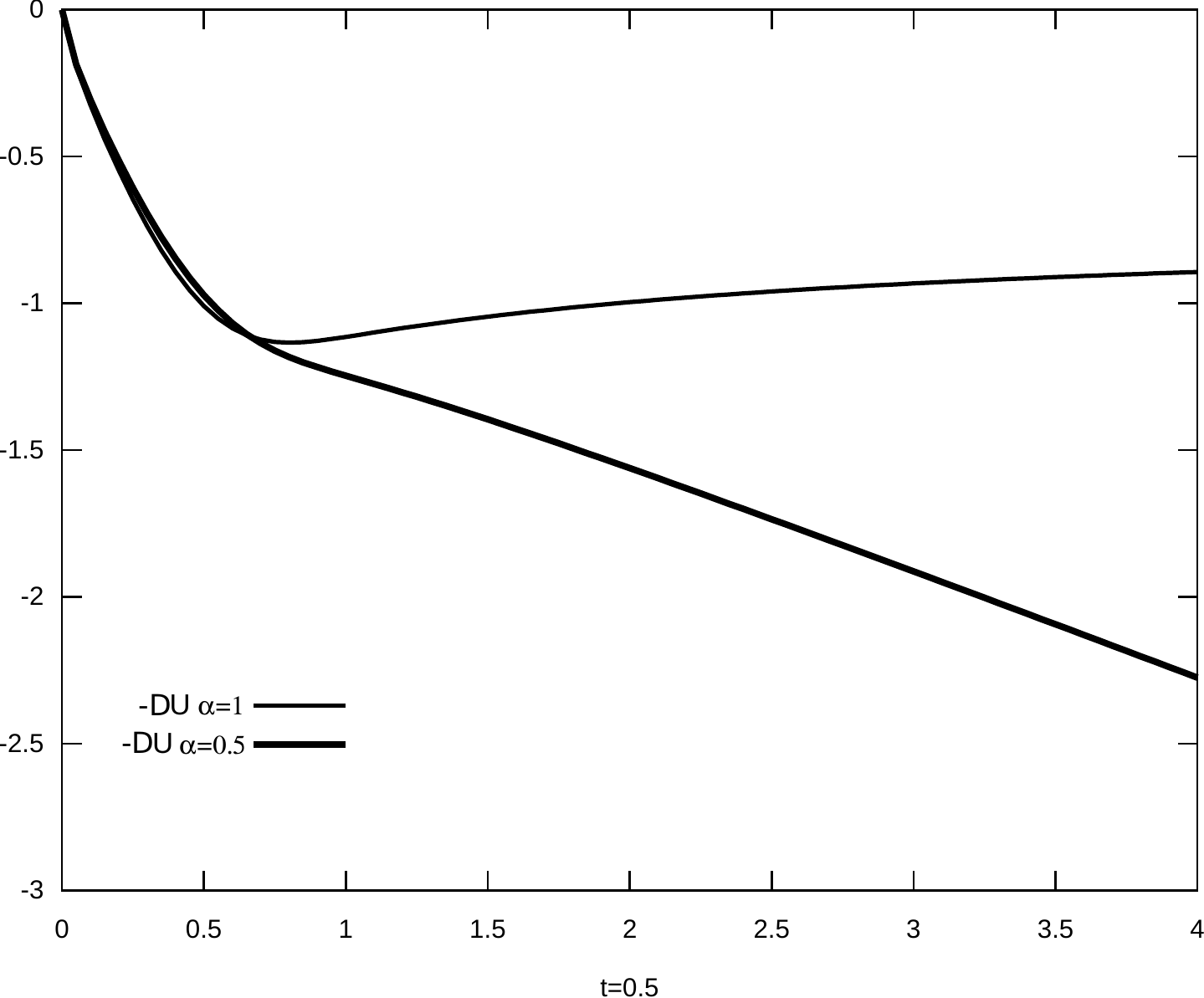}\\
	\includegraphics[width=0.45\textwidth]{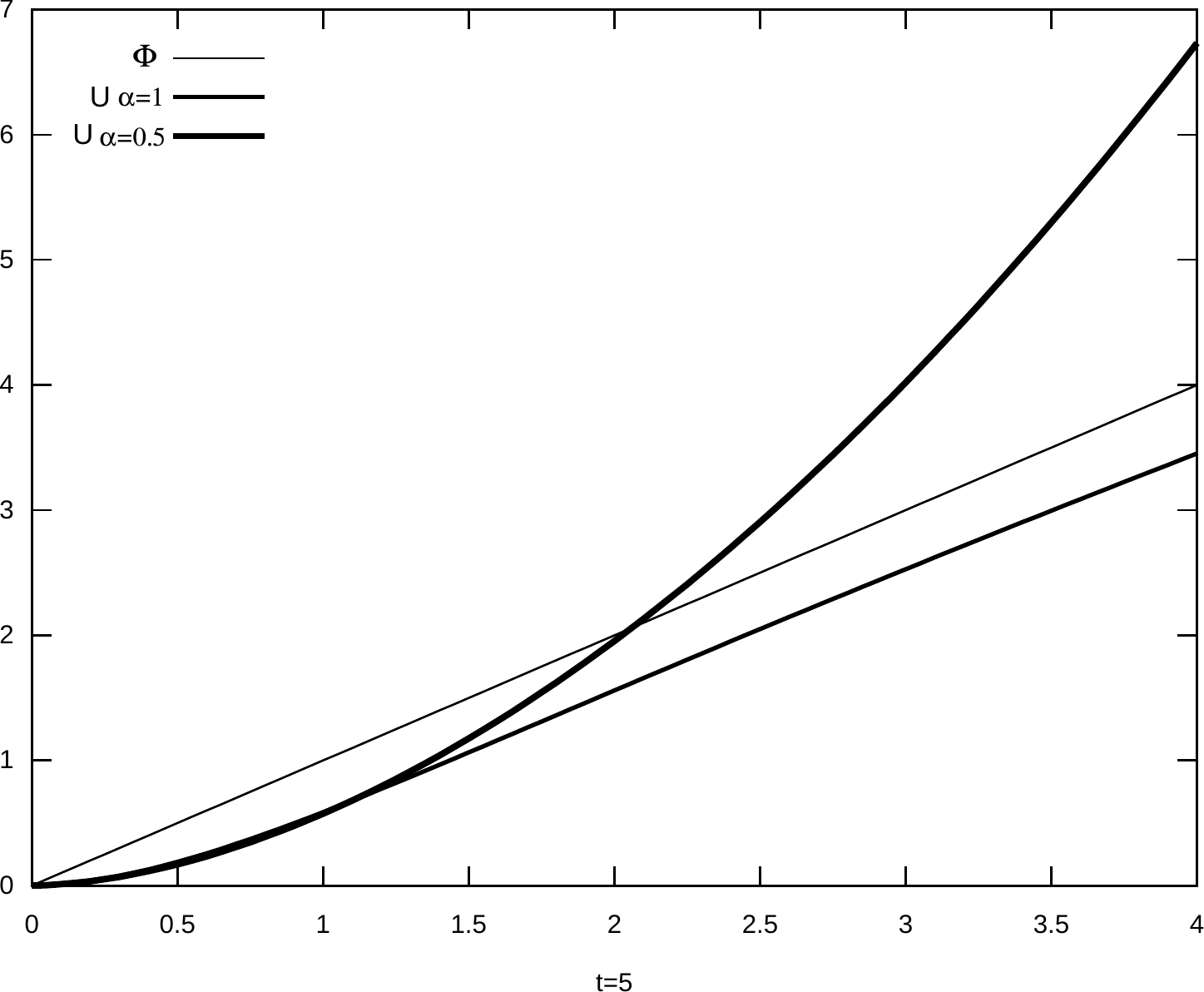} 
	\includegraphics[width=0.45\textwidth]{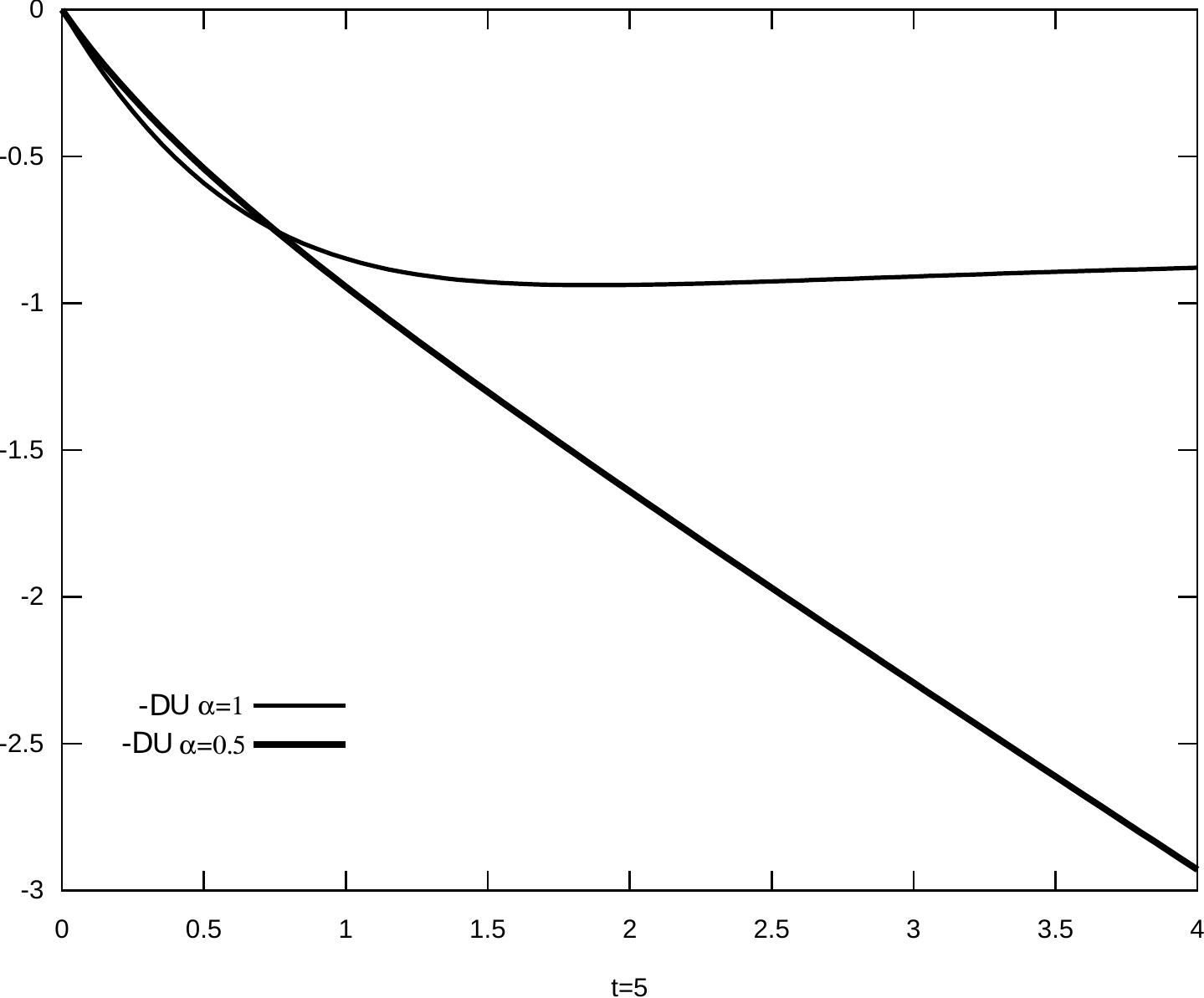}}
\caption{Value functions (left panels) and optimal controls (right panels) at different times for $\alpha=1$, $\alpha=\frac12$ and $\rho=\frac{3}{2}$.}\label{Test1}
\end{figure}

Note that the time horizon $T=5$ has been set large enough to reveal the asymptotic behavior in time of the solutions, namely their convergence, up to machine error, to stationary regimes. Similarly, the space boundary $x_{\max}$=4 is large enough to distinguish the growth of the solutions for $x\to+\infty$. We observe a linear behavior for the case $\alpha=1$, and a quadratic behavior for the case $\alpha=\frac12$. This can be better appreciated looking at the corresponding optimal controls, and it is confirmed by the simulation in Figure \ref{Test1-asymptotics}, in which we show, for $0<\alpha\le 1$, the asymptotic behavior of the $L^\infty$ norms in space of $U$ and $DU$ at the final time (achieved by monotonicity at $x_{\max}$) as both $x_{\max},\,T\to+\infty$. 
\begin{figure}[!h]
	\centering
	\includegraphics[width=0.46\textwidth]{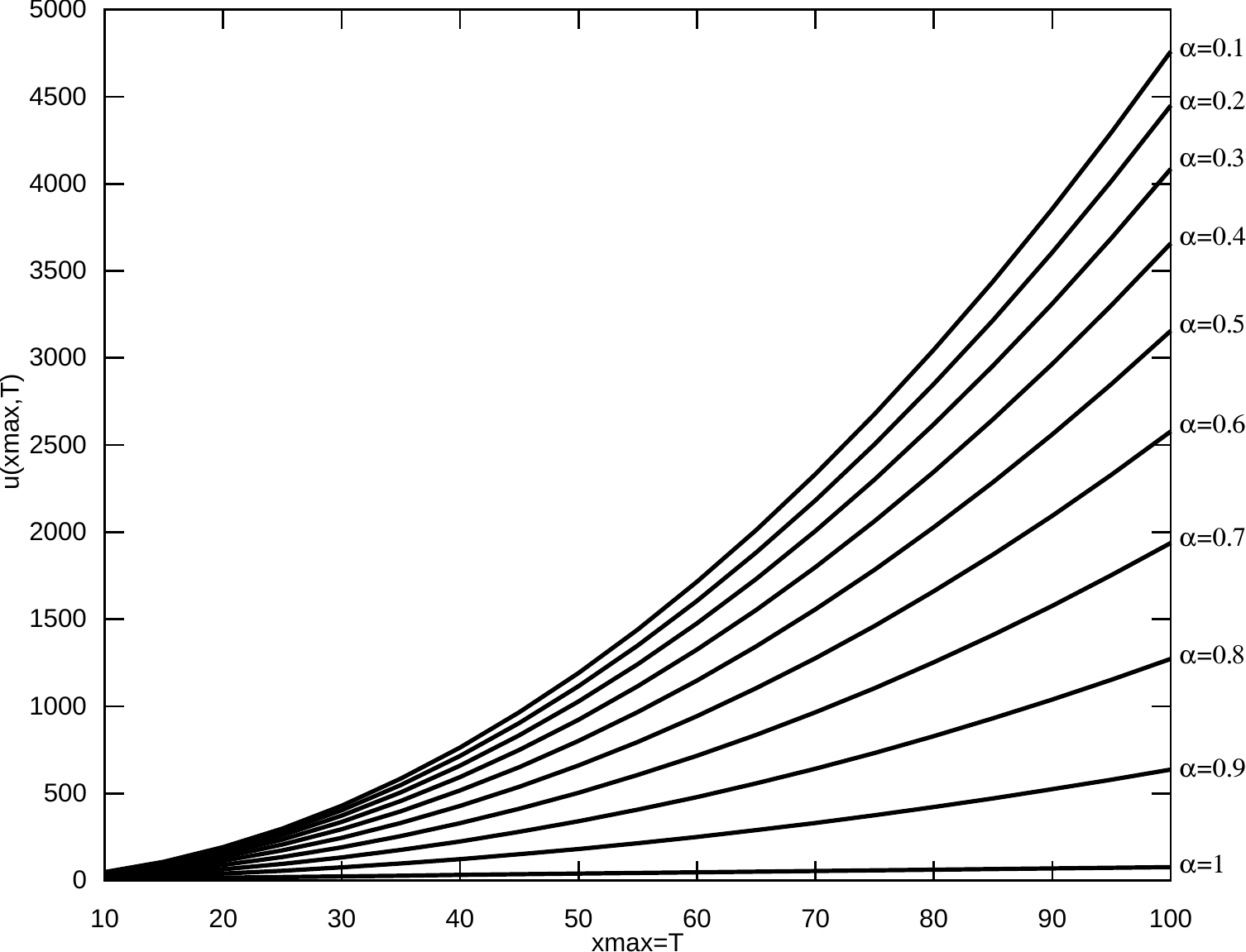} 
	\includegraphics[width=0.45\textwidth]{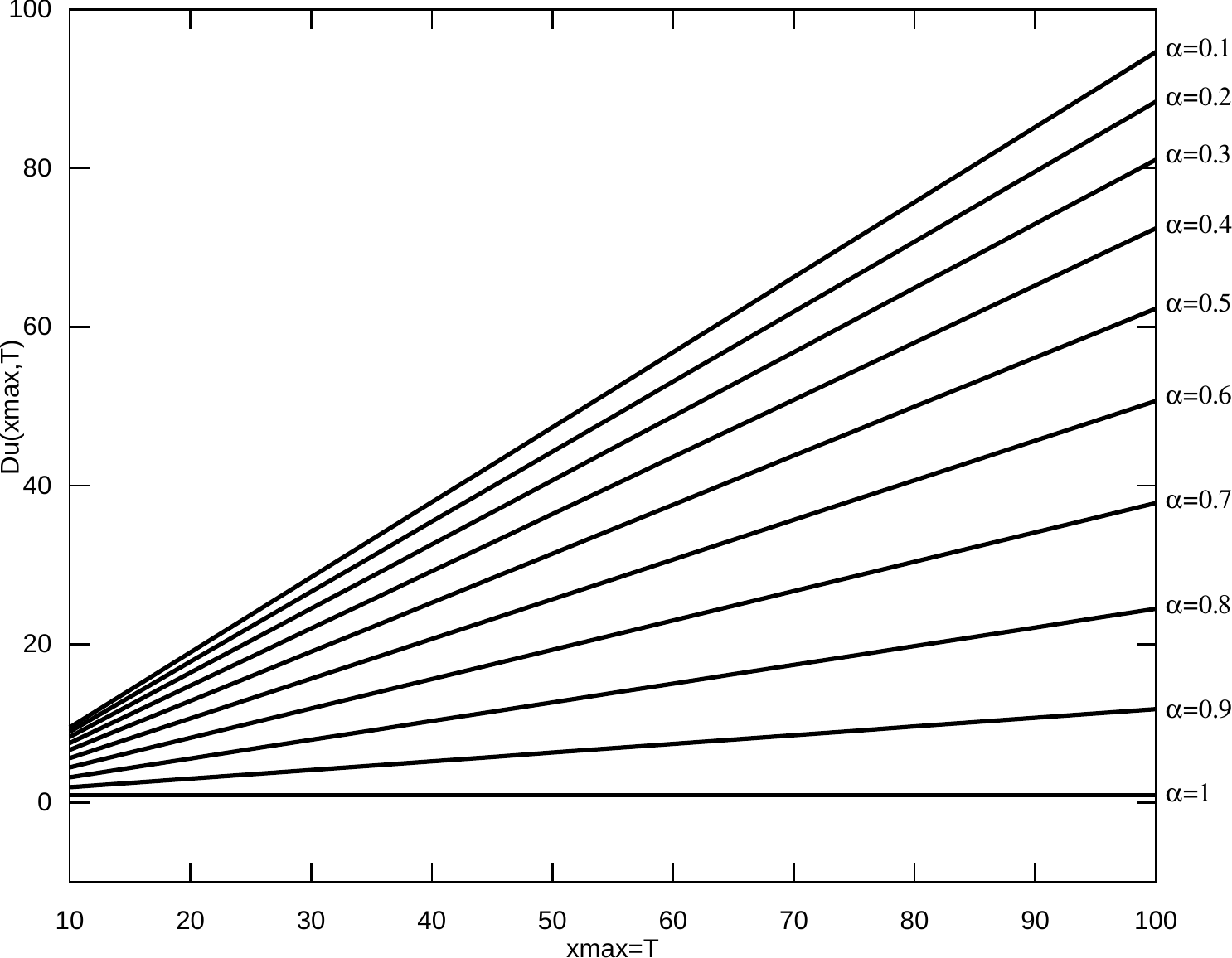}
\caption{$L^\infty$ space norms (at final time) of the value functions (left) and of their gradients (right) as $x_{\max},\,T\to+\infty$ for different values of $\alpha$ ranging in $[0,1]$.}\label{Test1-asymptotics}
\end{figure}
In particular, we find out that $\alpha=1$ is the only value that produces a globally Lipschitz continuous solution. A rigorous proof of this statement is still under investigation. 

In Figure \ref{Test1-trj}, we compare some optimal trajectories obtained for $\alpha=1$ (top panels) and $\alpha=\frac12$ (bottom panels).  In each plot, we report the endemic population $E$ (dashed line), the uncontrolled/controlled trajectories (bold lines), and the corresponding optimal controls (thin lines). Moreover, we choose two different initial data for the dynamics \eqref{controlya}, $x=0.5$ and $x=1.25$, respectively below and above $E$. 
\begin{figure}[!h]
\begin{tabular}{cc}
	\includegraphics[width=0.45\textwidth]{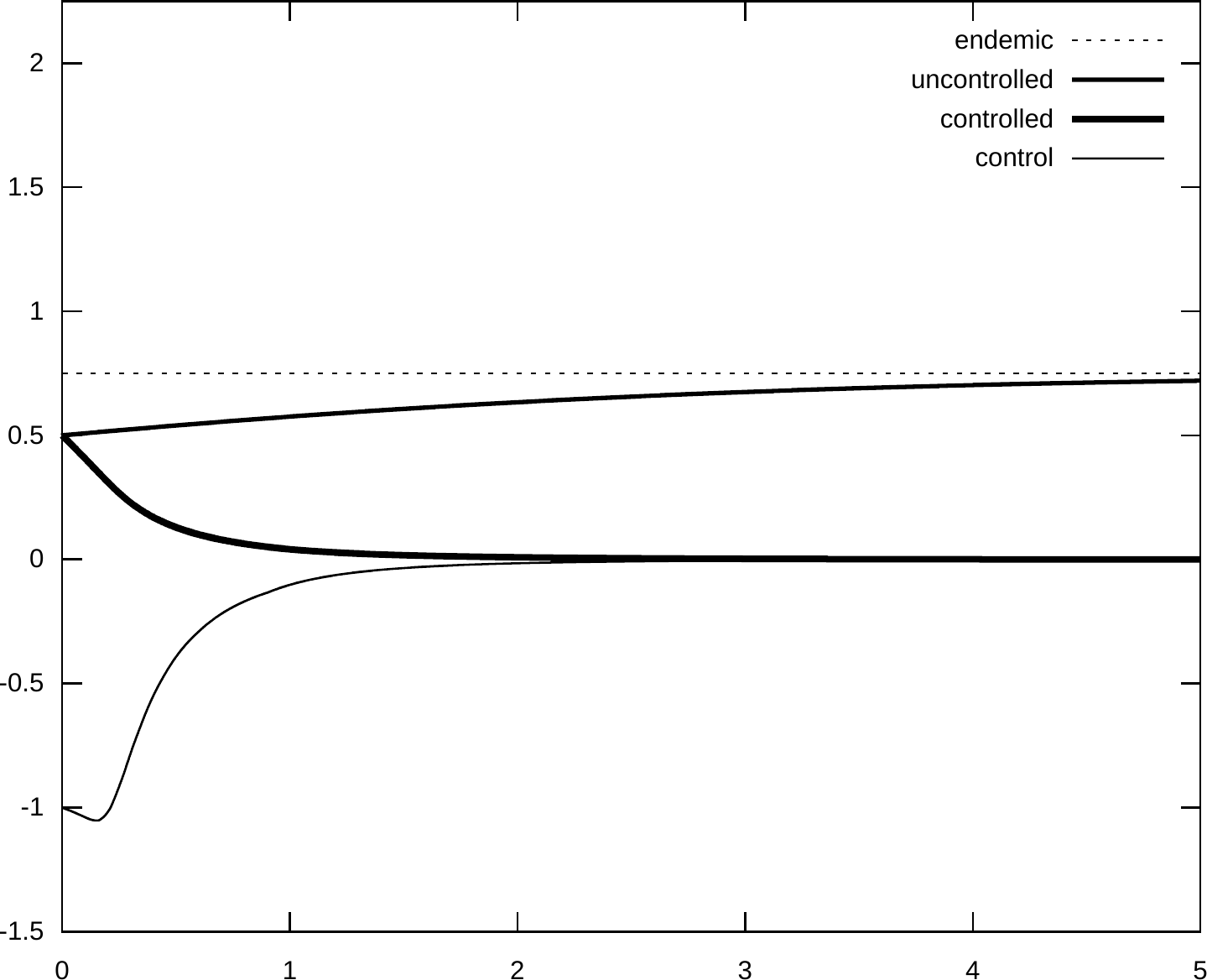} 
	&
	\includegraphics[width=0.45\textwidth]{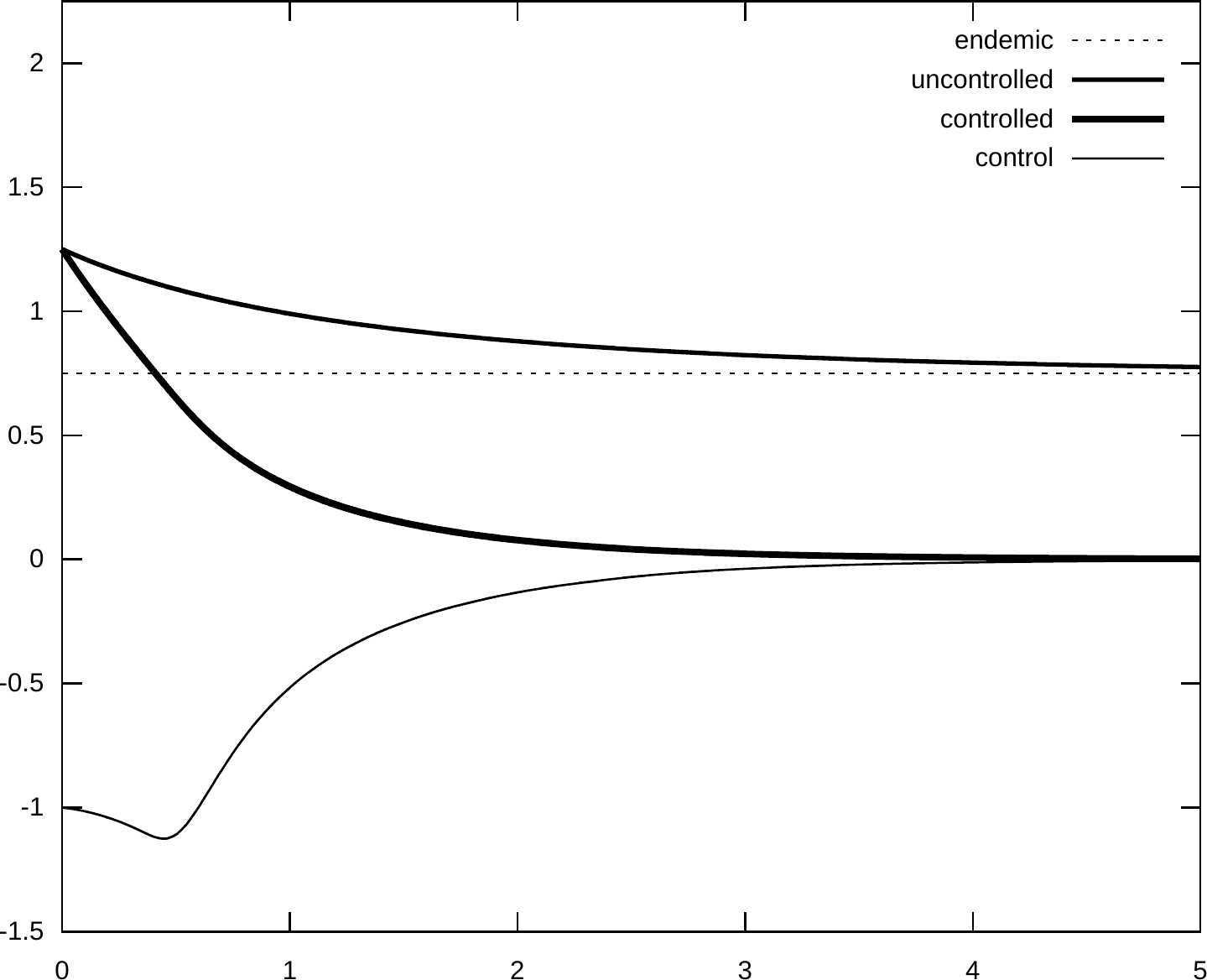}\\
	$\alpha=1,\,x=0.5$ & $\alpha=1,\,x=1.25$\\\\
	\includegraphics[width=0.45\textwidth]{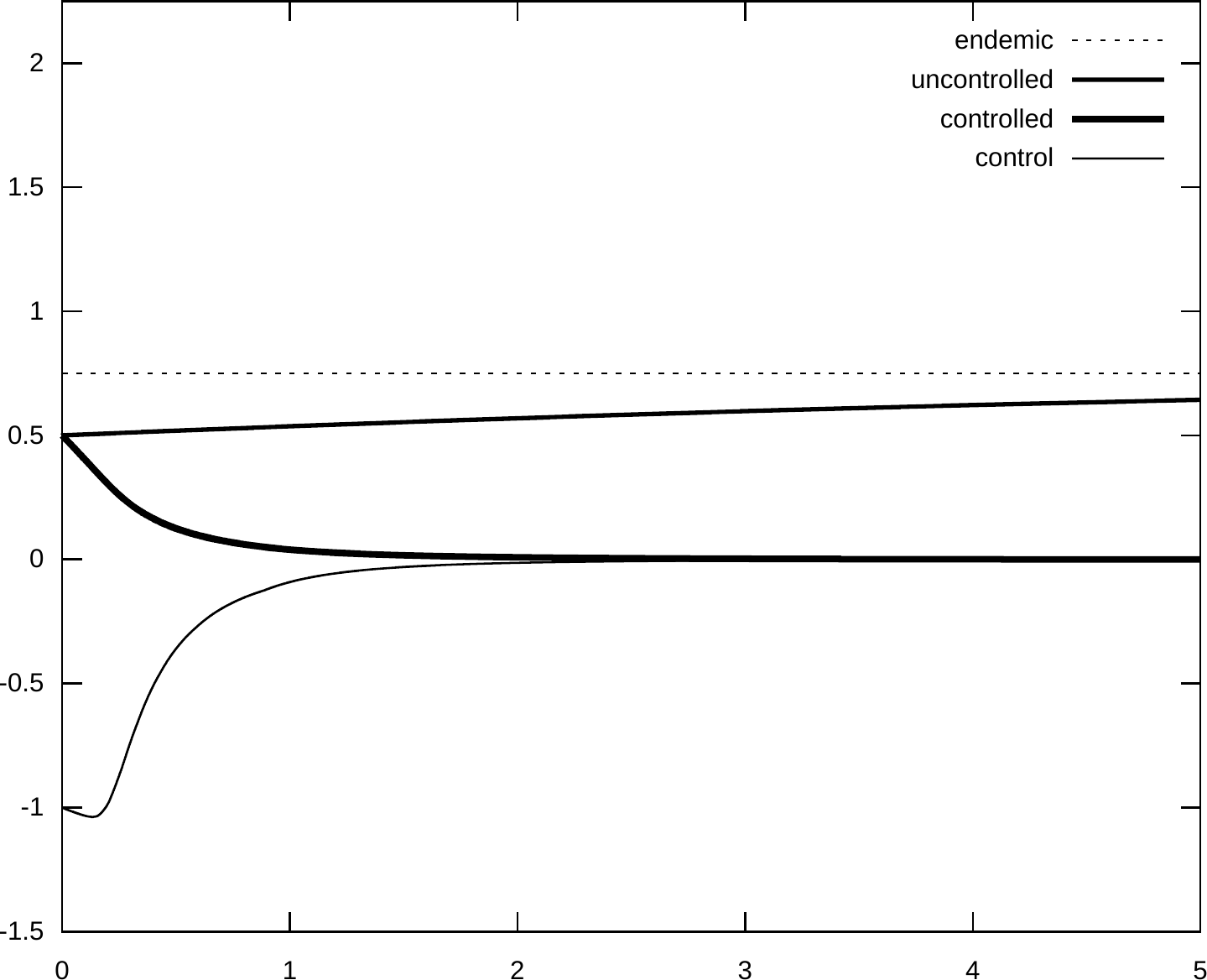} &
	\includegraphics[width=0.45\textwidth]{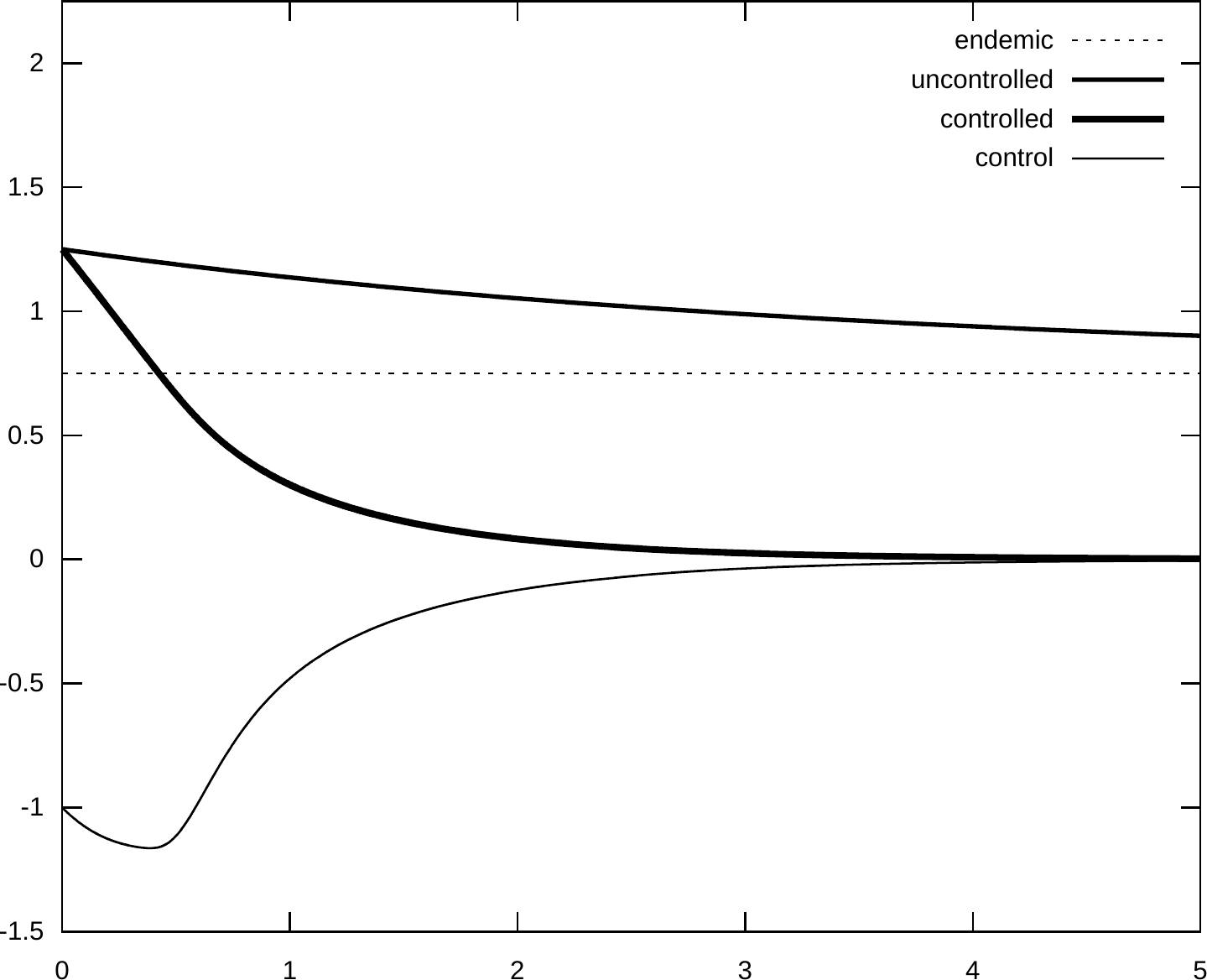}\\
		$\alpha=\frac12,\,x=0.5$ & $\alpha=\frac12,\,x=1.25$\\
	
\end{tabular}
\caption{Optimal trajectories for different fractional orders $\alpha$ and initial data $x$.}\label{Test1-trj}
\end{figure}
 As discussed in the introduction, the fractionary SIS system can be recasted  in the model \eqref{saturatedSIS}, with ordinary derivatives and saturated growth rates. In particular, the growth of infective individuals is softened as $\alpha$ decreases. This effect is apparent in the uncontrolled trajectories. In the same time horizon, we observe that the uncontrolled trajectory approaches $E$ for $\alpha=1$, while for $\alpha=\frac12$ it is ``lazier''  and still far from the endemic value at the final time. 
 On the other hand, we observe that the optimal control always succeeds in steering the system to the origin (the unstable equilibrium in this case). Nevertheless, while the controlled trajectories are quite similar (as their optimal controls) when the evolution starts from $x=0.5<E$, the case $x=1.25>E$ for $\alpha=\frac12$ requires an additional effort to compensate the slower decay of the corresponding dynamics. Indeed, we observe an optimal control with a larger amplitude in the fragment $[0,0.5]$ of the time interval.
 
The numerical results for the case with a reproduction factor $\rho\le 1$ are quite similar to the previous ones, and we omit them for brevity. We just remark that the endemic value now falls out of the space domain ($E\le 0$), while the state $x=0$ is a stable equilibrium for the system, see again Proposition \ref{equilibrium}.  This implies that, for all the initial data in $(0,x_{\max}]$, the corresponding uncontrolled trajectories eventually converge to $x=0$, whereas the controlled ones have a faster decay, in order to optimize the cost functional \eqref{value} for the optimal control problem.

Let us now consider an example with a non smooth exit cost, namely we choose $\phi(x)=\min\{2x+\frac12,6x^2\}$, so to produce a kink in the solution. Moreover, we choose the parameters as in the previous tests, with the exception of the space domain size, that we set to $x_{\max}=2$ in order to achieve a sharper CFL condition and mitigate the numerical diffusion of the scheme. In Figure \ref{kink}, we show the results for the case $\rho=\frac{3}{2}$ and $\alpha=1$. In each plot we report, at different times, the value function compared to $\phi$ and also the optimal control. 
\begin{figure}[!h]
	\centering
	\includegraphics[width=0.45\textwidth]{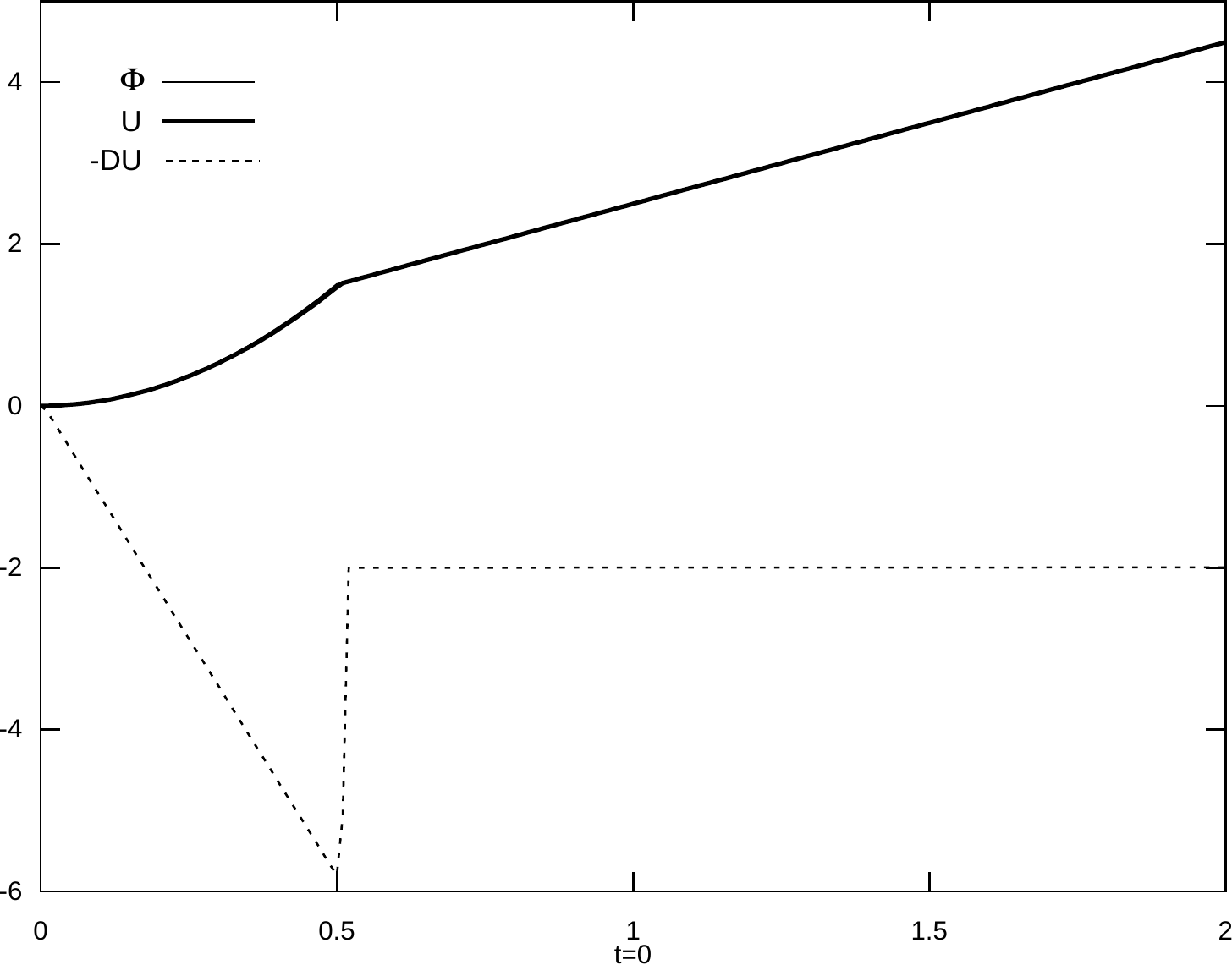} 
	\includegraphics[width=0.45\textwidth]{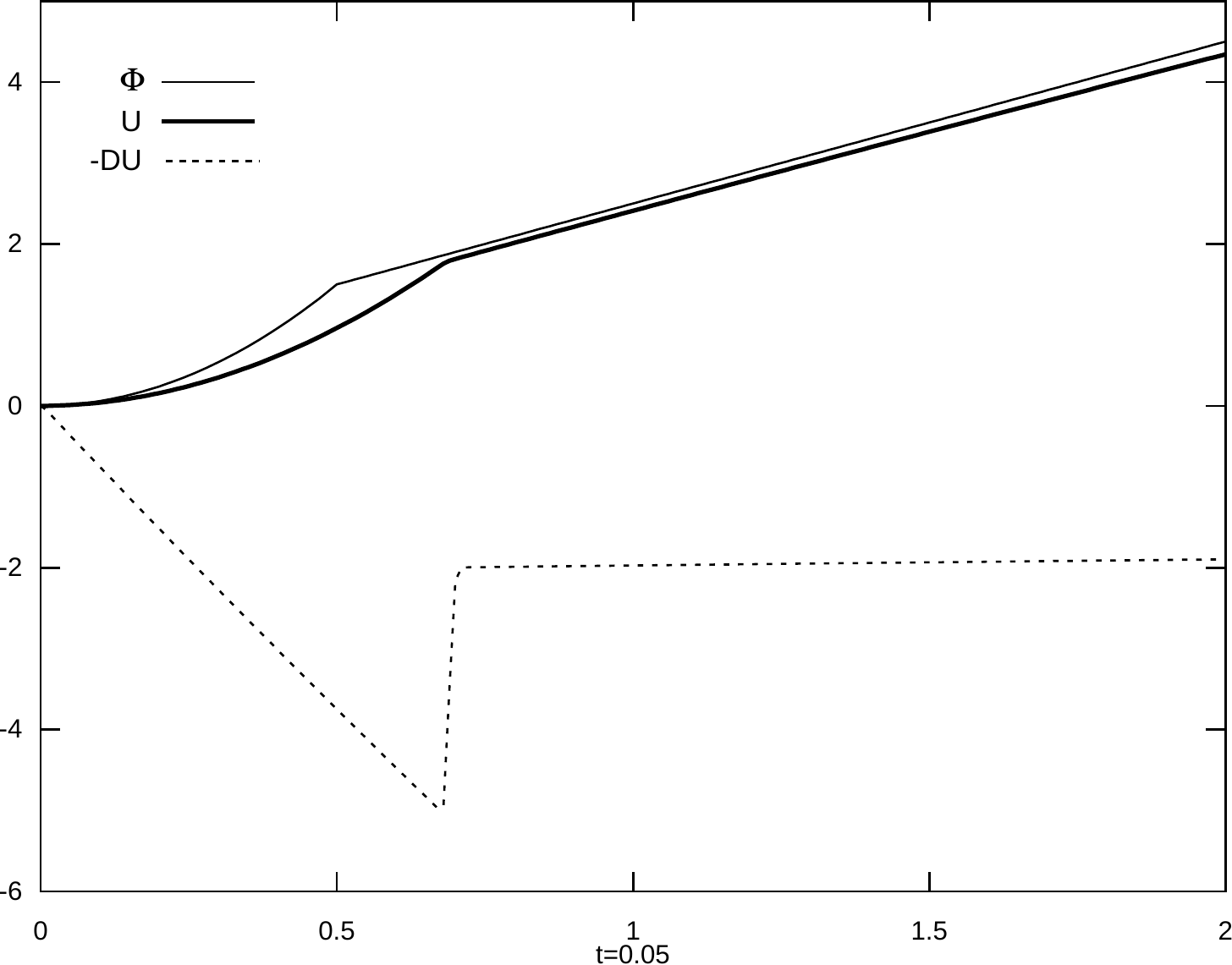}\\
	\includegraphics[width=0.45\textwidth]{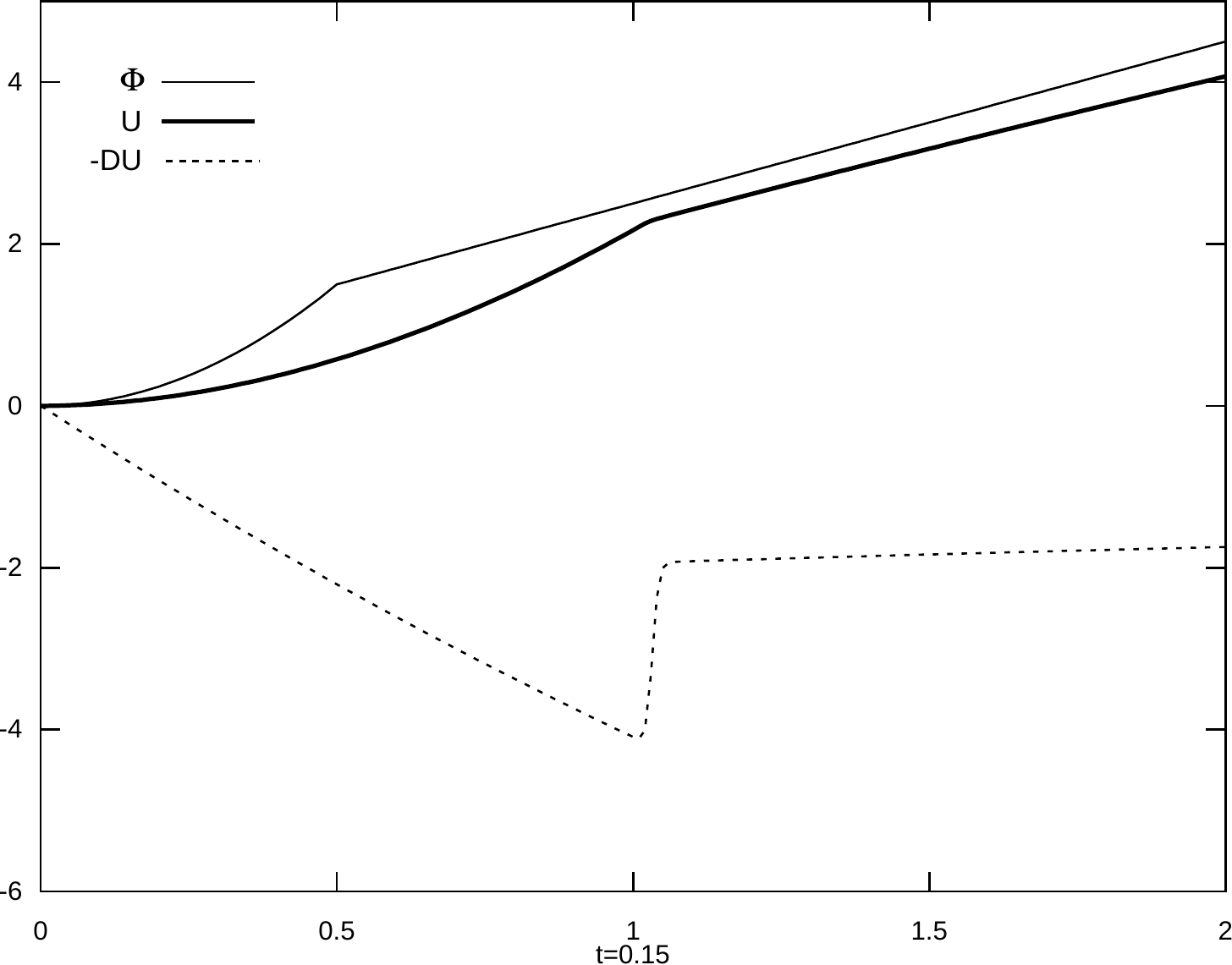} 
	\includegraphics[width=0.45\textwidth]{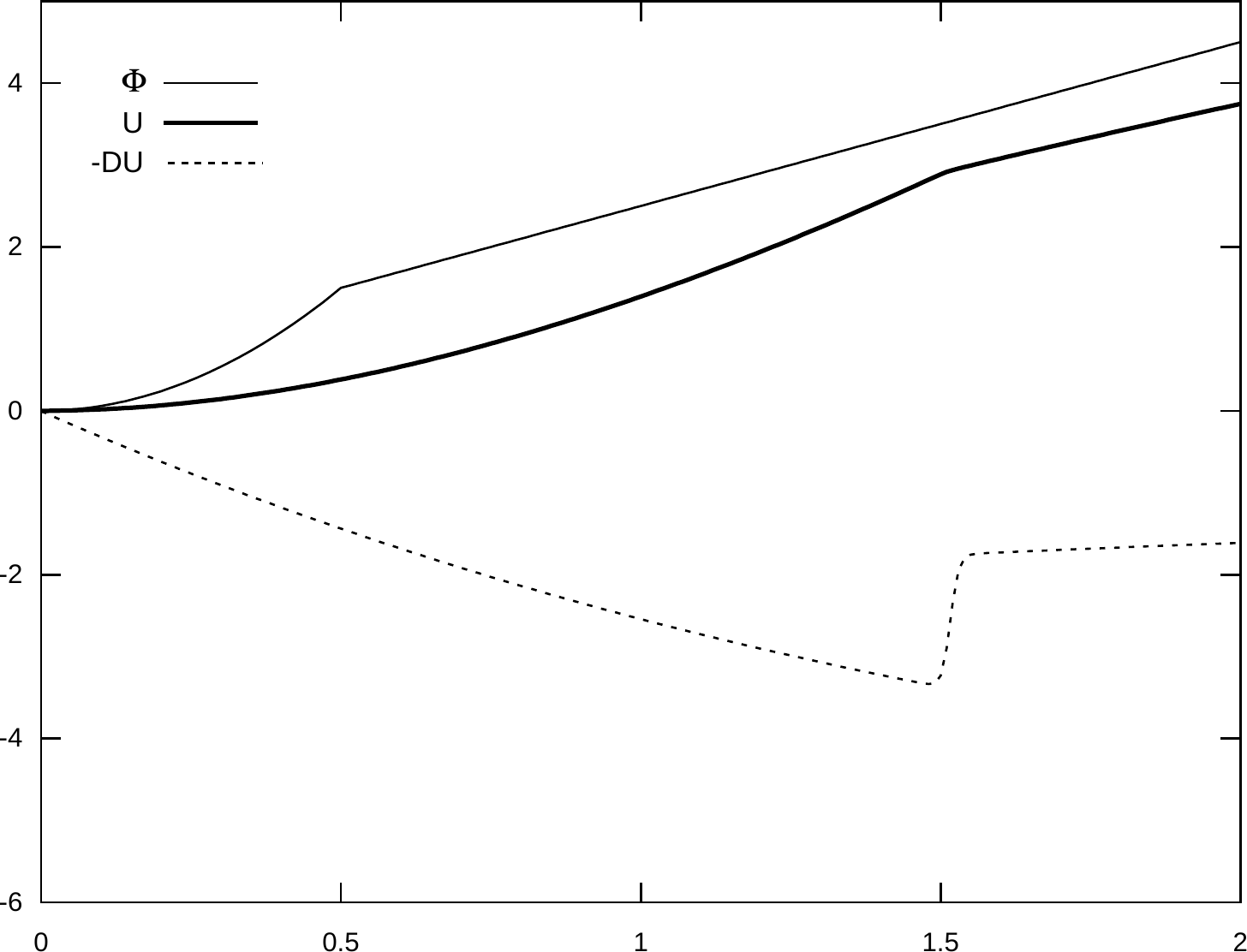}\\
	\includegraphics[width=0.45\textwidth]{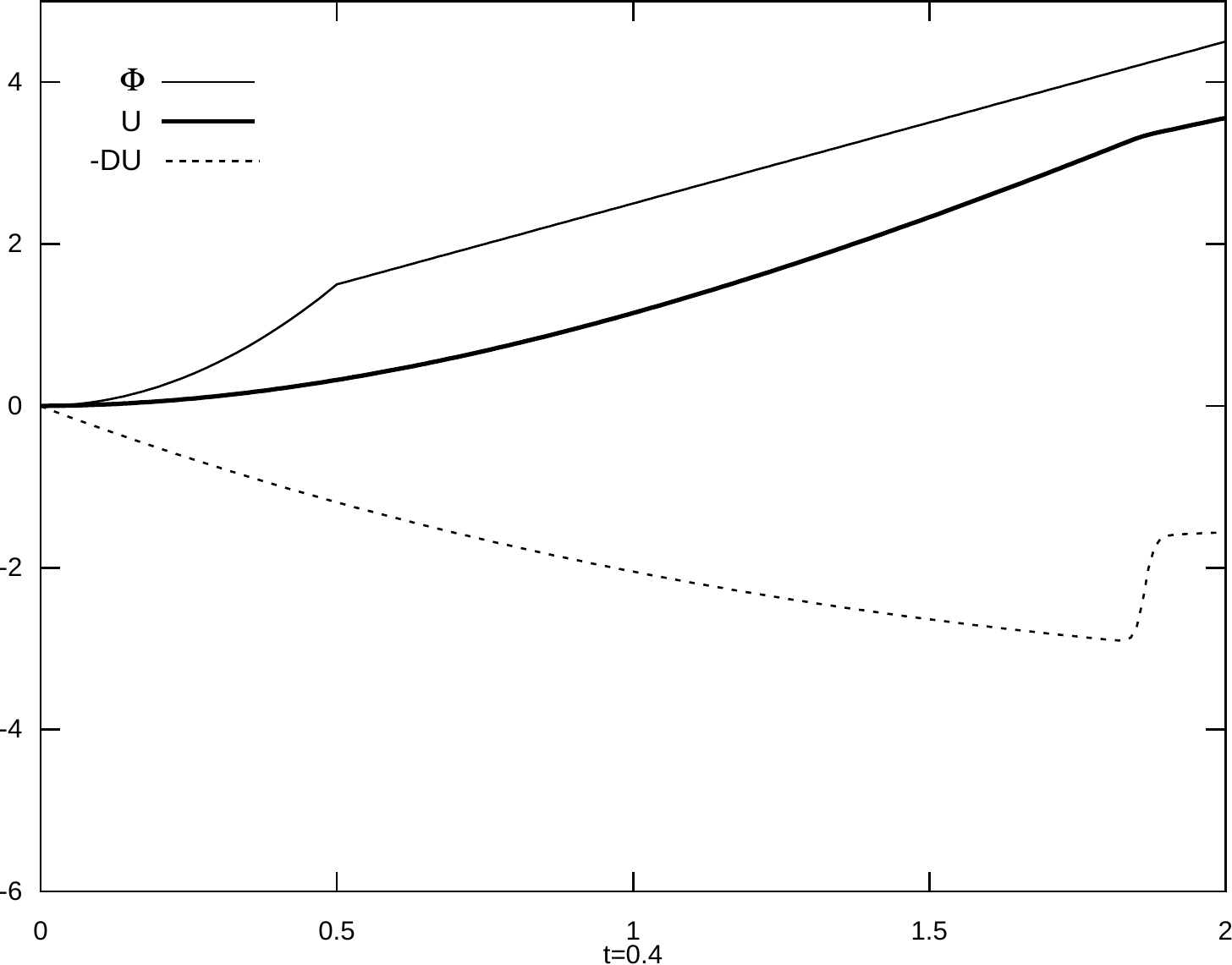} 
	\includegraphics[width=0.45\textwidth]{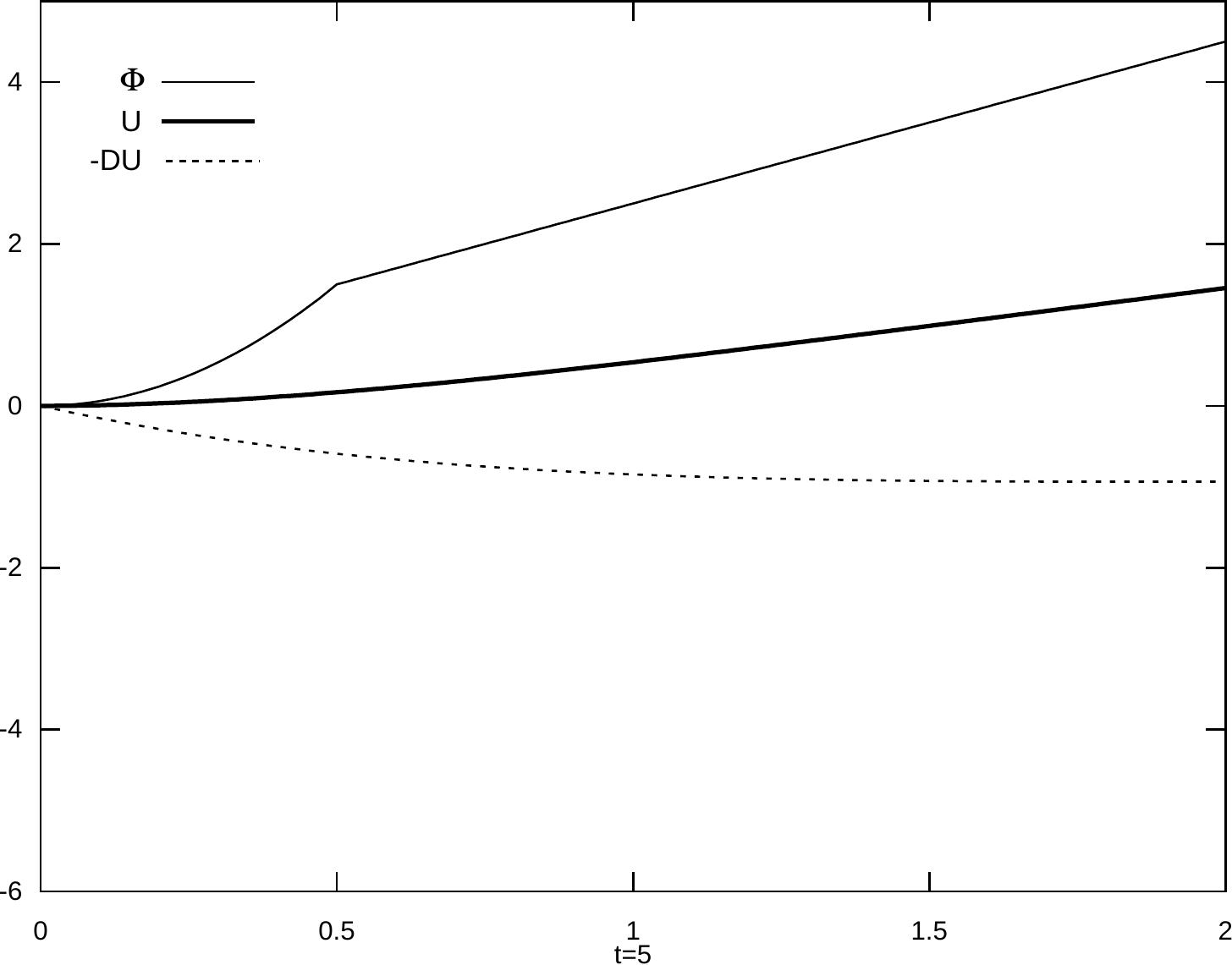}
\caption{Non smooth exit cost, value function and optimal control at different times.}\label{kink}
\end{figure}
We clearly observe that the kink in the solution moves and eventually exits the domain as the time increases. Asymptotically, we obtain a smooth solution as in the previous tests. 

We finally consider the case of a 
smooth exit cost $\phi(x)=x+exp(-40(x-\frac12)^2)$, which corresponds to penalize the final distribution of infective individuals around the point $x=\frac12$ more than for larger values (up to about $x=\frac32$). The results for the case $\rho=\frac{3}{2}$ and $\alpha=1$ are reported in Figure \ref{kink-gen}. We observe that, in the first part of the evolution, the point $x=\frac12$ acts as a barrier, preventing some states of the system to be steered to the desired one $x=0$. More precisely, the local minimizer of $\phi$ (around about $x=0.8$) is more favorable for states beyond this barrier, where the optimal control has a change of sign. This creates a kink in the solution, which starts moving towards the right boundary of the domain only at a later time. 
\begin{figure}[!h]
	\centering
	\includegraphics[width=0.45\textwidth]{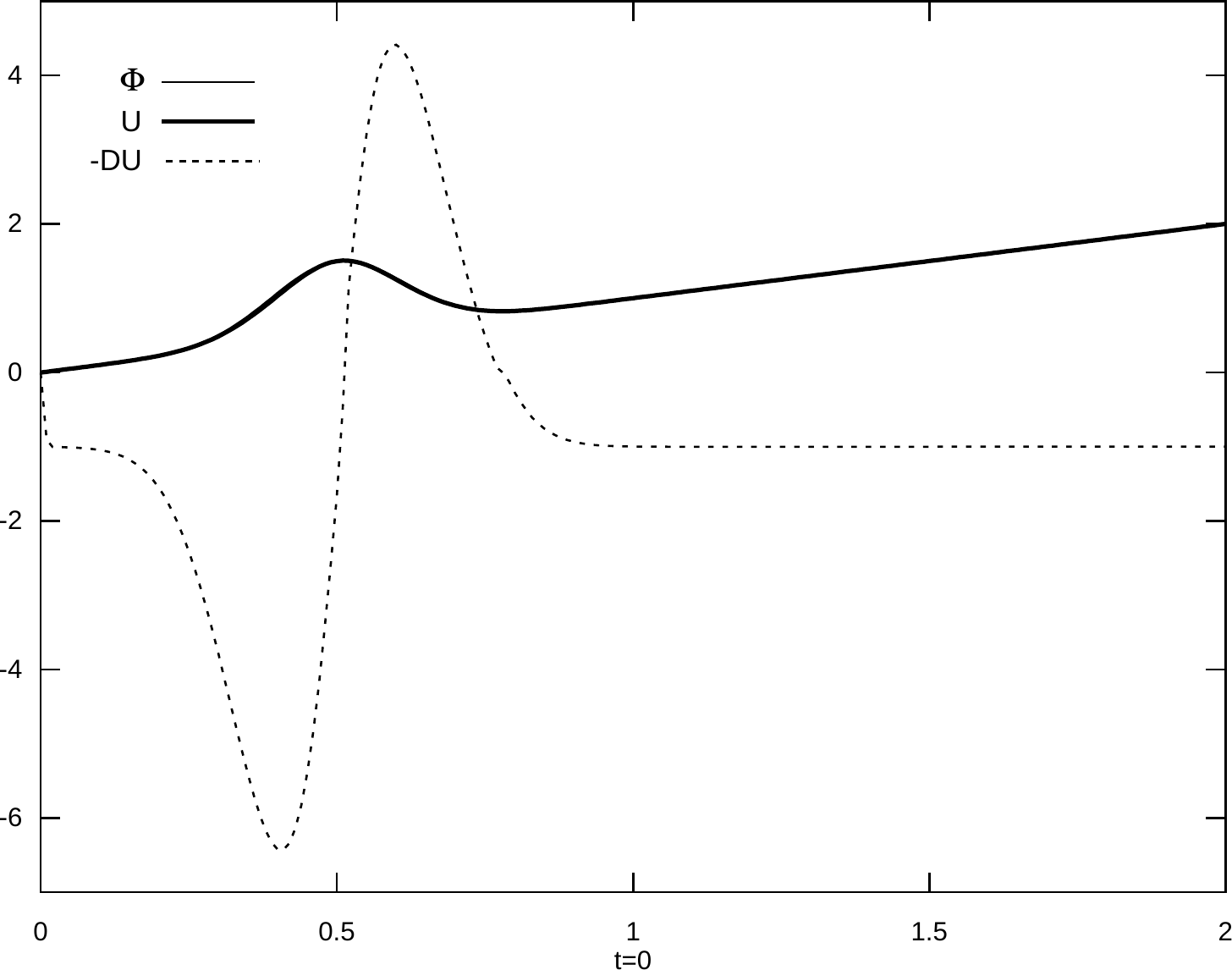} 
	\includegraphics[width=0.45\textwidth]{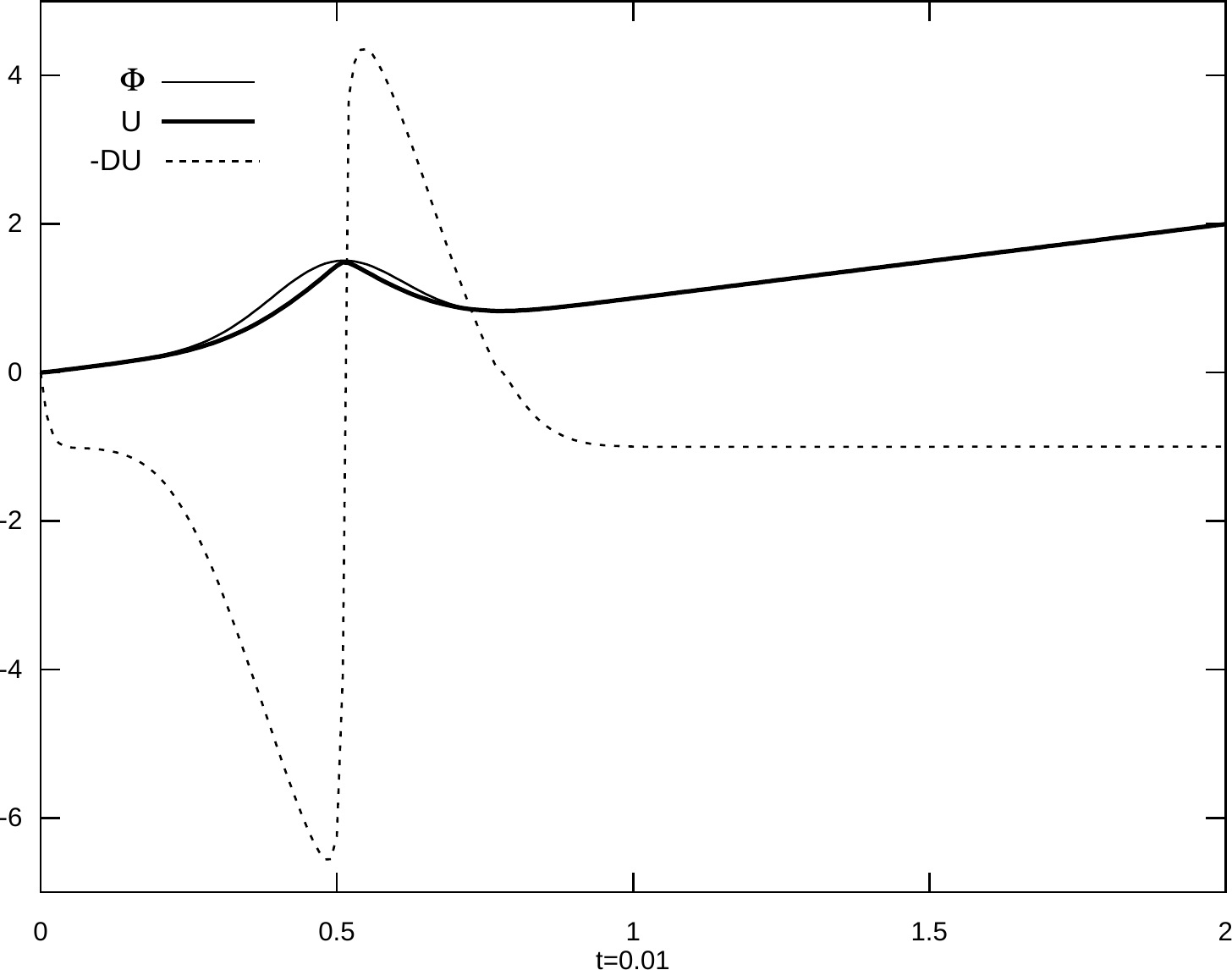}\\
	\includegraphics[width=0.45\textwidth]{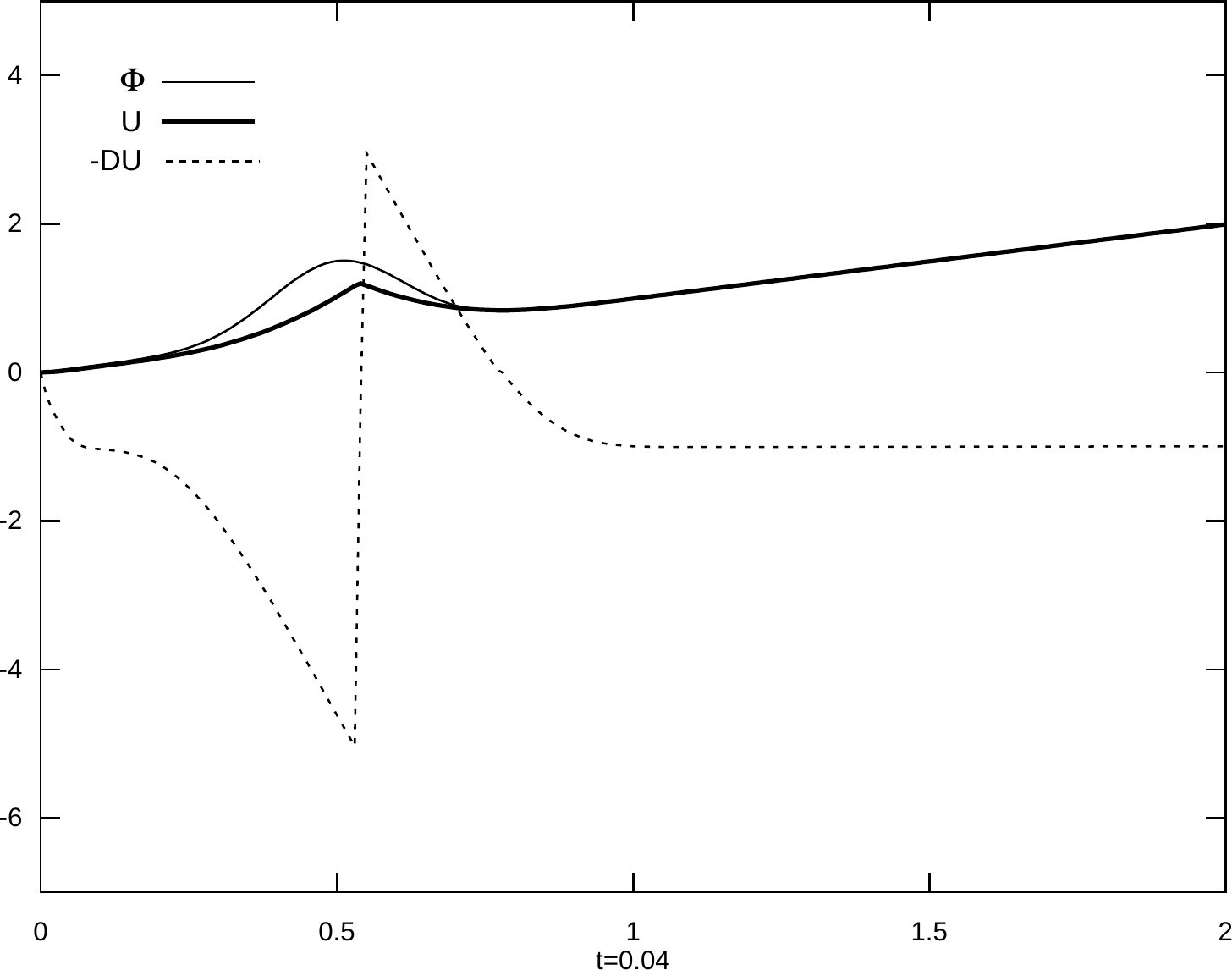} 
	\includegraphics[width=0.45\textwidth]{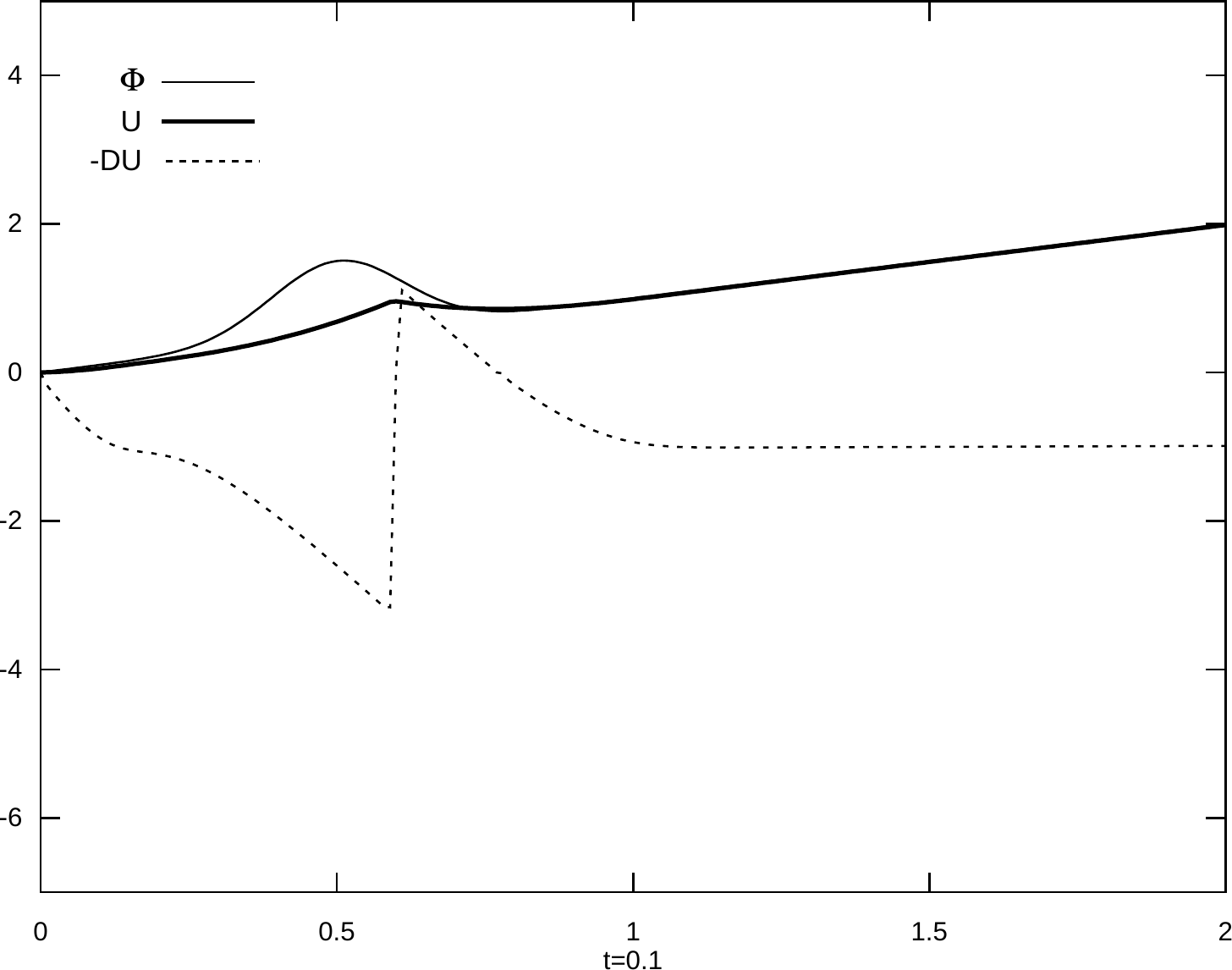}\\
	\includegraphics[width=0.45\textwidth]{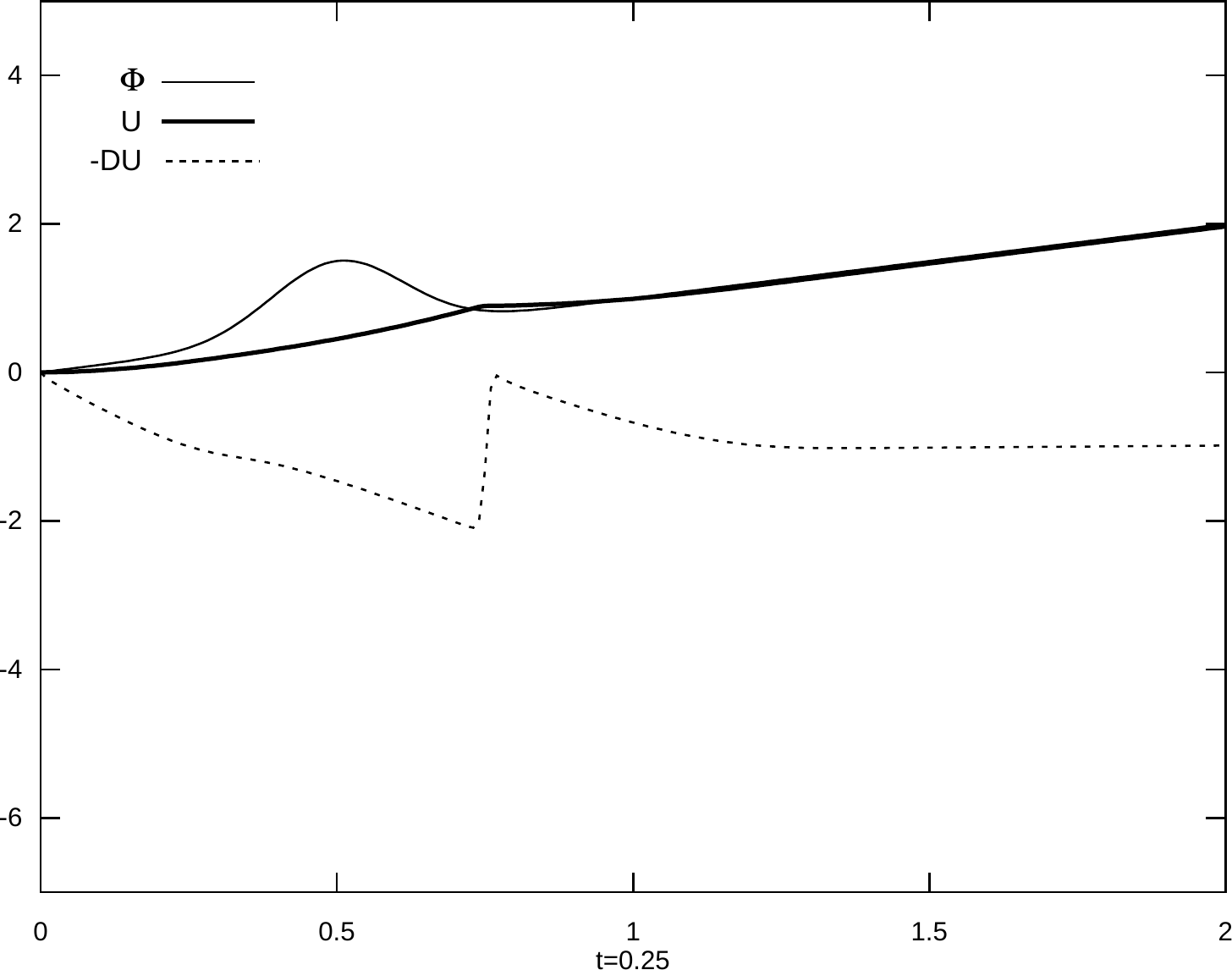} 
	\includegraphics[width=0.45\textwidth]{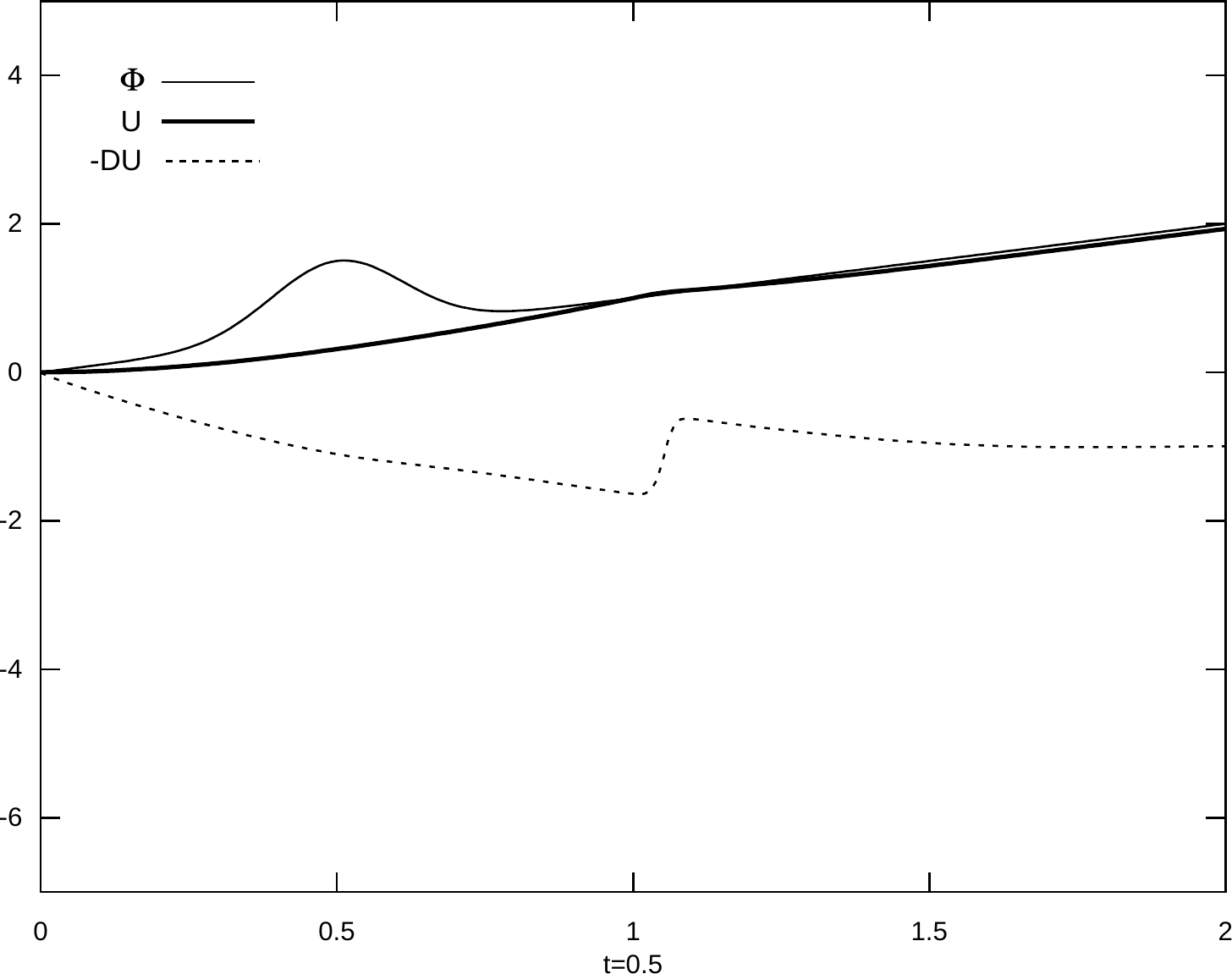}
\caption{Kink generation from a smooth exit cost, value function and optimal control at different times.}\label{kink-gen}
\end{figure}

In Figure \ref{Test3-trj}, we compare the corresponding optimal trajectories, obtained for the initial data $x=0.48$ and $x=0.52$, respectively slightly below and above the barrier. 
\begin{figure}[!h]
\begin{tabular}{cc}
	\includegraphics[width=0.45\textwidth]{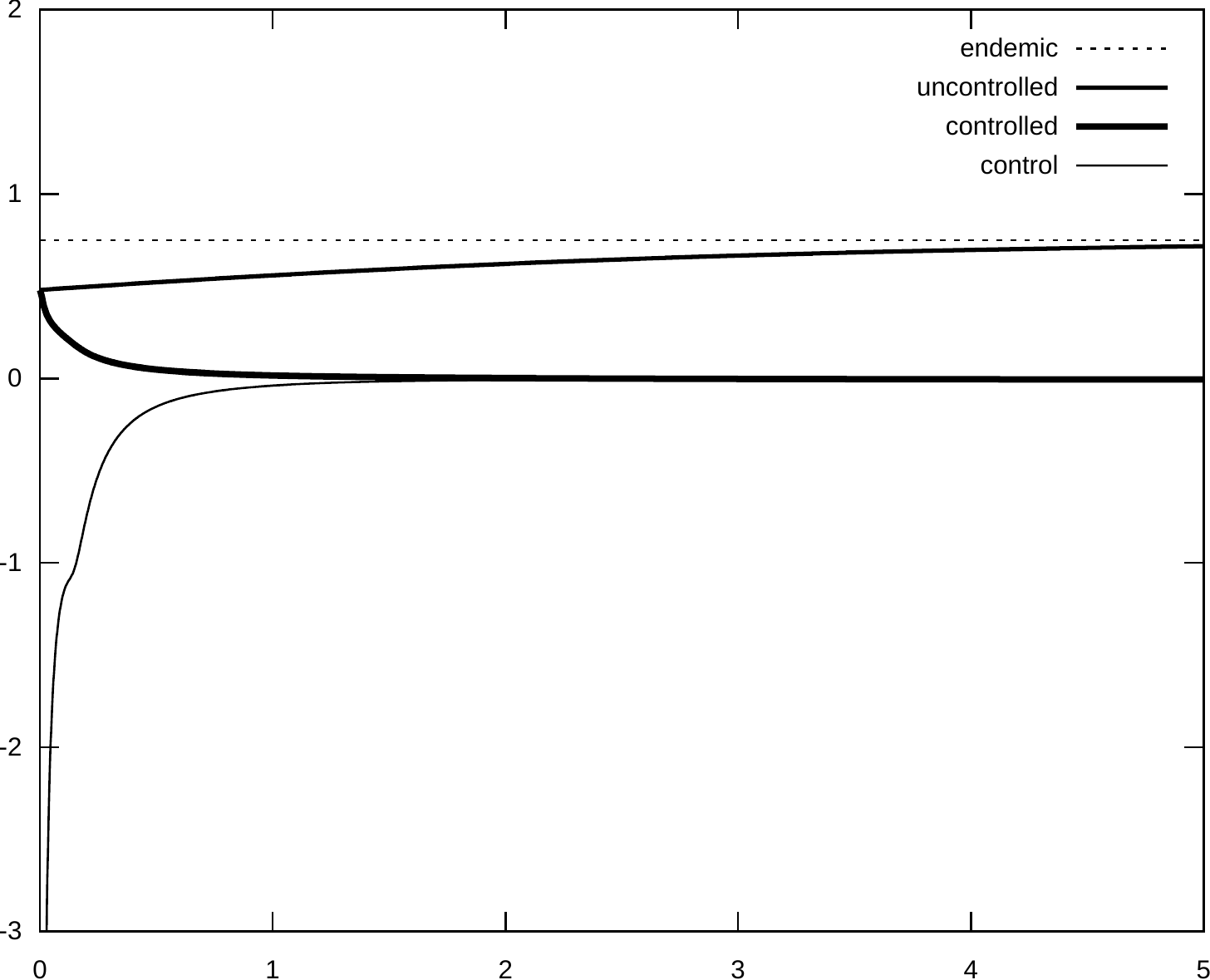} 
	&
	\includegraphics[width=0.45\textwidth]{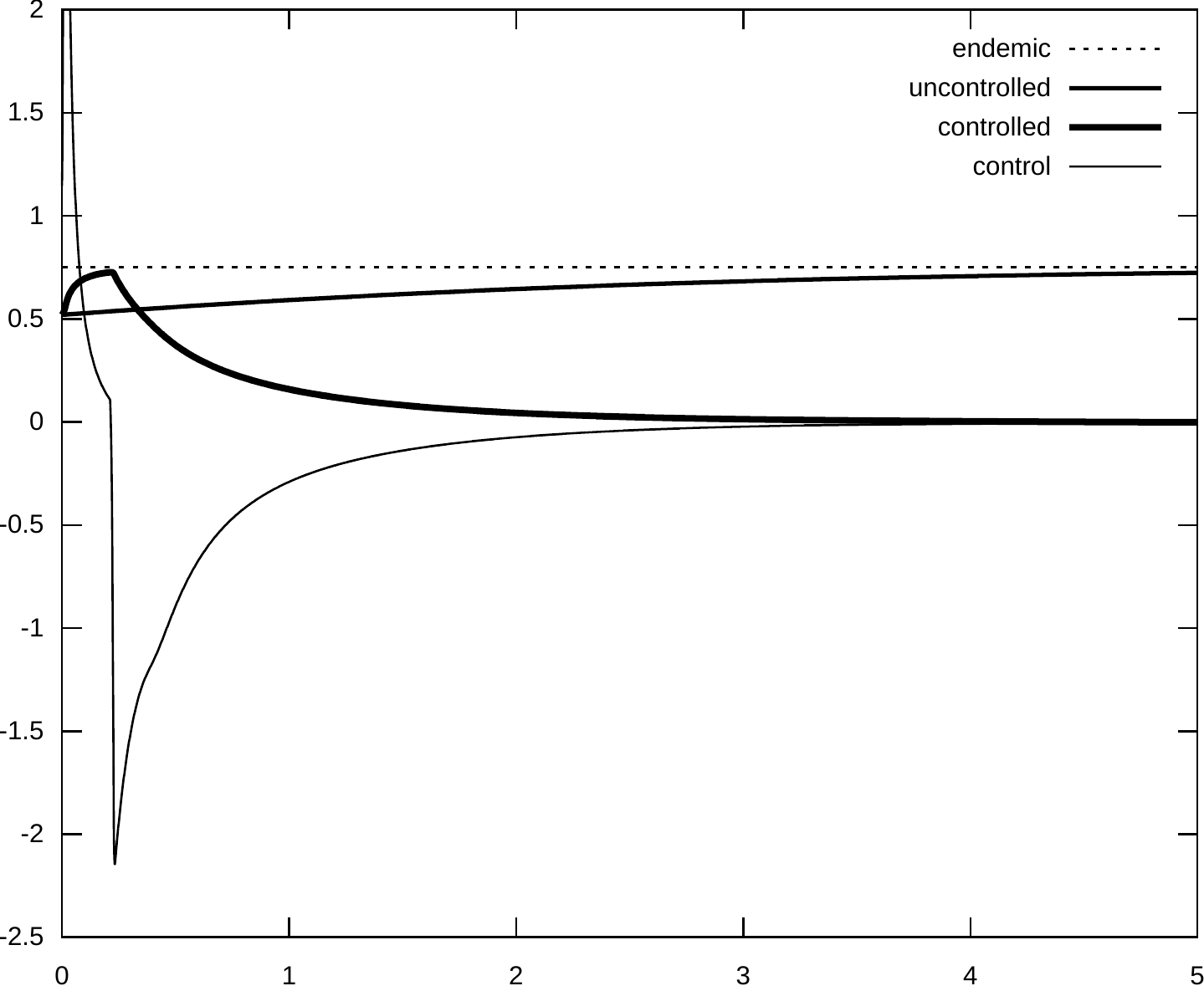}\\
	$x=0.48$ & $x=0.52$
	\end{tabular}
\caption{Optimal trajectories for different initial data $x$.}\label{Test3-trj}
\end{figure}
In the first case, we obtain a controlled trajectory similar to the previous tests, with just a larger amplitude in the control due to the choice of $\phi$. On the other hand, the second case confirms the scenario discussed above. Indeed, the optimal control acts in the positive direction for a small amount of time, pushing the controlled trajectory close to the endemic population, then readily jumps to a negative value, and starts steering the system to the origin.       

To conclude this section, we compare the stationary regime of the solution $u$ of the Hamilton-Jacobi equation \eqref{hj} with the smooth viscosity solution $\bar v^\alpha$ for the stationary equation \eqref{hjstat}, provided in Remark \ref{rmk1} by 
$$\bar v^\alpha(x)=\phi(0)+\int_0^x b_\alpha(s)+\sqrt{b_\alpha^2(s)+s^2}ds.$$
We compute $\bar v^\alpha$ on our numerical grid, approximating the integral by a simple trapezoidal quadrature rule, using the same space step $\Delta x$. Moreover, we set $x_{\max}=T=10$, and we choose the same exit cost $\phi$ of the previous test (note that this affects the convergence of $u$ in time, while $\bar v^\alpha$ only depends on $\phi(0)$). In Table \ref{table}, we report the results of the comparison under grid refinement, for different choices of $\alpha$ and $\rho$, evaluating the difference $u(\cdot,T)-\bar v^\alpha(\cdot)$ both in $L^\infty$ and $L^2$ space norms.  As $\Delta x\to 0$, we clearly observe a decay of the errors, respectively of order $\mathcal{O}(\Delta x)$ and $\mathcal{O}(\Delta x^2)$ for the two norms, and also a slowdown in convergence  as $\alpha$ and $\rho$ decrease. 
This numerical experiment is in agreement with the result proved in Theorem \ref{thm1}. In particular, convergence is obtained on bounded space intervals, and we have observed in all the experiments that possible irregularities of $u$ are pushed out of the domain towards infinity, before approaching the stationary smooth solution $\bar v^\alpha$ in the limit $T\to +\infty$. 
\begin{table}[!h]
    \centering
    \resizebox{\columnwidth}{!}{\begin{tabular}{r|c|c|c|c|c|c|c|c|}
    & \multicolumn{2}{|c|}{\small $\alpha=1$, $\rho=\frac32$} 
    & \multicolumn{2}{|c|}{\small $\alpha=\frac12$, $\rho=\frac32$} 
    & \multicolumn{2}{|c|}{\small $\alpha=1$, $\rho=\frac12$} 
    & \multicolumn{2}{|c|}{\small $\alpha=\frac12$, $\rho=\frac12$}\\ 
    \hline
    $\Delta x$ 
    & \small $L^\infty$ err & \small $L^2$ err   
    & \small $L^\infty$ err & \small $L^2$ err   
    & \small $L^\infty$ err & \small $L^2$ err   
    & \small $L^\infty$ err & \small $L^2$ err\\
    \hline
    
   0.1     & 0.047 & 0.01758 & 0.334 & 0.40162 & 0.089 & 0.04607 & 0.341 & 0.41196 \\\hline
0.05    & 0.023 & 0.00439 & 0.167 & 0.09971 & 0.044 & 0.01147 & 0.170 & 0.10226\\\hline
0.025   & 0.012 & 0.00109 & 0.083 & 0.02484 & 0.022 & 0.00286 & 0.085 & 0.02547\\\hline
0.0125  & 0.006 & 0.00027 & 0.042 & 0.00619 & 0.011 & 0.00071 & 0.043 & 0.00636\\\hline
0.00625 & 0.003 & 0.00007 & 0.021 & 0.00155 & 0.005 & 0.00018 & 0.021 & 0.00158\\
    \hline
            \end{tabular}}
    \caption{Comparison between $u$ and $\bar v^\alpha$ under grid refinement for different model parameters.}
    \label{table}
\end{table}

\bibliographystyle{alpha}
{\small
\bibliography{opthj}}

\end{document}